\newtheorem{thm}{Theorem}[section]
\newtheorem{lemma}[thm]{Lemma}
\newtheorem{prop}[thm]{Proposition}
\theoremstyle{definition}
\newtheorem{remark}{Remark}
\newcommand{\cC}{\mathcal{C}}
\newcommand{\cF}{\mathcal{F}}
\newcommand{\cE}{\mathcal{E}}
\newcommand{\cS}{\mathcal{S}}
\newcommand{\N}{\mathbb{N}}
\newcommand{\Z}{\mathbb{Z}}
\newcommand{\V}{\mathbb{V}}
\newcommand{\E}{\mathbb{E}}
\renewcommand{\Pr}{\mathbb{P}}
\newcommand{\1}{\mathbf{1}}
\def\eqd{\,{\buildrel d \over =}\,}
\newcommand{\zero}{\mathbf{0}}
\DeclareMathOperator\dist{dist}
\newcommand{\iid}{i.i.d.}
\title{Finitely dependent processes are finitary}
\date{\today}
\author{Yinon Spinka}
\address{University of British Columbia.
    Department of Mathematics.
 	Vancouver, BC V6T 1Z2, Canada.}
    \email{yinon@math.ubc.ca}
\begin{document}

\begin{abstract}
We show that any finitely dependent invariant process on a transitive amenable graph is a finitary factor of an \iid\ process.
With an additional assumption on the geometry of the graph, namely that no two balls with different centers are identical, we further show that the \iid\ process may be taken to have entropy arbitrarily close to that of the finitely dependent process.
As an application, we give an affirmative answer to a question of Holroyd~\cite{holroyd2017one}.
\end{abstract}

\maketitle

\section{Introduction}\label{sec:introduction}

Consider a random process $X=(X_v)_{v\in\V}$ living on the vertex set $\V$ of an infinite graph $G$. The process~$X$ is said to be \emph{finitely dependent} if its restrictions to sets which are sufficiently separated (at least some fixed distance apart) are independent. A trivial example of a finitely dependent process is a process $Y=(Y_v)_{v \in \V}$ in which all random variables are independent. A natural question is then how close is a finitely dependent process to such an independent process?
Before addressing this question, we first observe that ``local functions'' of an independent process $Y$ are always finitely dependent. That is, if $X$ is obtained from $Y$ by computing each $X_v$ as a function only of the random variables $Y_u$ for which $u$ is at a uniformly bounded distance from $v$, then $X$ is finitely dependent.

Suppose now that $G$ is transitive and henceforth restrict attention to processes $X$ which are invariant under all automorphisms of $G$ (or under a transitive subgroup of automorphisms). In particular, the independent process $Y$ considered above must now be an \iid\ process -- that is, in addition to being independent, the $\{Y_v\}_v$ are also identically distributed.
If $X$ is obtained from~$Y$ by applying the same local function at each vertex $v$ (i.e., the function applied at $u$ is the composition of the function applied at $v$ with any automorphism taking $u$ to $v$), then $X$ is said to be a \emph{block factor} of $Y$.
Thus, block factors of \iid\ processes provide a recipe for constructing invariant finitely dependent processes.

It was a long-standing open problem~\cite{ibragimov1965independent,ibragimov1975independent} to determine whether block factors of \iid\ processes are the only (invariant) finitely dependent processes on $\Z$, until finally an example was given by Burton--Goulet--Meester~\cite{burton19931} of a 1-dependent process which is not a block factor of any \iid\ process.
Recently, Holroyd and Liggett~\cite{holroyd2016finitely} showed that proper colorings distinguish between block factors of \iid\ processes and finitely dependent processes -- no proper coloring of $\Z$ is a block factor of an \iid\ process, but finitely dependent proper colorings exist.

Thus, it is not true that every finitely dependent process is a block factor of an \iid\ process. In other words, given a finitely dependent process $X$, one cannot in general hope to find an \iid\ process $Y$ and an invariant rule for computing $X$ from $Y$, which allows to determine the value of $X_v$ by looking at $Y$ on a \emph{fixed-size} window around $v$.
The goal of this paper is to show the ``next best thing'' -- namely, that it is possible to determine $X_v$ by looking at $Y$ on a \emph{variable-sized} window around $v$, where the size of the window, though always finite, may vary according to the input $Y$.

We say that $X$ is a \emph{finitary factor} of $Y$ if there is an invariant rule which allows to compute the value of $X$ at any vertex $v$ by only looking at variables $Y_u$ for which $u$ is within a certain finite, but random, distance from $v$ (formal definitions are given below). Thus, a block factor is a finitary factor in which the required distance is not only finite, but is determistically bounded by some constant.
The main contribution of this paper is to prove that every finitely dependent process is a finitary factor of an \iid\ process.
This result holds on any amenable graph $G$. When it is further assumed that no two balls in $G$ with different centers are identical, it is also possible to control the entropy of the \iid\ process involved, and the result becomes that every finitely dependent process is a finitary factor of an \iid\ process with only slightly larger entropy.

\subsection{Definitions and main result}
Let $\V$ be a countable set, let $G$ be a graph on vertex set $\V$ and let $\Gamma$ be a group acting on $\V$. A \emph{random field (or random process)} on $G$ is a collection of random variables $X=(X_v)_{v \in \V}$ indexed by the vertices of $G$ and defined on a common probability space.
We say that $X$ is \emph{$\Gamma$-invariant} if its distribution is not affected by the action of $\Gamma$, i.e., if $(X_{\gamma v})_{v \in \V}$ has the same distribution as $X$ for any $\gamma \in \Gamma$. We say that $X$ is \emph{$k$-dependent} if $(X_u)_{u \in U}$ and $(X_v)_{v \in V}$ are independent for any two sets $U,V \subset \V$ such that $\dist(u,v)>k$ for all $u \in U$ and $v \in V$. We say that $X$ is \emph{finitely dependent} if it is $k$-dependent for some finite $k$.

Suppose now that $G$ is a transitive locally finite graph and that $\Gamma$ is a subgroup of the automorphism group of $G$.
Let $S$ and $T$ be two measurable spaces, and let $X=(X_v)_{v \in \V}$ and $Y=(Y_v)_{v \in \V}$ be $S$-valued and $T$-valued $\Gamma$-invariant random fields.
A \emph{coding} from $Y$ to $X$ is a measurable function $\varphi \colon T^\V \to S^\V$ that is \emph{$\Gamma$-equivariant}, i.e., commutes with the action of every element of $\Gamma$, and satisfies that $\varphi(Y)$ and $X$ are identical in distribution. Such a coding is also called a \emph{factor map} from $Y$ to~$X$, and when such a coding exists, we say that $X$ is a \emph{$\Gamma$-factor} of~$Y$.

Suppose now that $S$ and $T$ are countable.
Let $\zero \in \V$ be a distinguished vertex.
The \emph{coding radius} of $\varphi$ at a point $y \in T^\V$, denoted by $R(y)$, is the minimal integer $r \ge 0$ such that $\varphi(y')_\zero=\varphi(y)_\zero$ for all $y' \in T^\V$ which coincide with $y$ on the ball of radius $r$ around $\zero$ in the graph-distance, i.e., $y'_v=y_v$ for all $v \in \V$ such that $\dist(v,\zero) \le r$. It may happen that no such~$r$ exists, in which case, $R(y)=\infty$.
Thus, associated to a coding is a random variable $R=R(Y)$ which describes the coding radius.
While $S$ will always be at most countable, we will allow $T$ to be a larger space, in which case the coding radius may be similarly defined\footnote{We will only be concerned with spaces $T$ which are finite, countable or of the form $T_1 \times T_2 \times \cdots$ for finite sets $(T_i)$. In the latter case, the coding radius is the smallest $r$ for which there exists $n$ such that $\varphi(y')_\zero=\varphi(y)_\zero$ for all $y'$ having the property that $y'_{v,i}=y_{v,i}$ for all $(v,i)$ such that $\dist(v,\zero) \le r$ and $1 \le i \le n$.}.
A coding is called \emph{finitary} if $R$ is almost surely finite. When there exists a finitary coding from $Y$ to $X$, we say that $X$ is a \emph{finitary $\Gamma$-factor} of $Y$.

A graph is said to be \emph{amenable} if $\inf |\partial V|/|V| = 0$, where the infimum is over all finite non-empty subsets $V$ of $\V$, and where $\partial V$ denotes the \emph{edge-boundary} of $V$.

\begin{thm}\label{thm:main-no-entropy}
	Let $G$ be a transitive amenable graph and let $\Gamma$ be a transitive group of automorphisms of~$G$. Then any finitely dependent $\Gamma$-invariant random field on $G$ is a finitary $\Gamma$-factor of an \iid\ process.
\end{thm}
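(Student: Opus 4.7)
\emph{Overall strategy.} The plan is to construct $X$ as a finitary factor of an \iid\ process $Y$ by building it up in stages. At stage $n$, an independent ``layer'' $Y^{(n)}$ of $Y$ (obtained by thinking of $Y$ as taking values in a countable product and assigning one coordinate per stage) is used to define $X$ on a growing random subset $A_n \subset \V$. The key enabling fact is that if $\{B_j\}$ is a finite family of vertex-sets pairwise at distance $>k$ and all at distance $>k$ from a previously defined set $A$, then by $k$-dependence the $X|_{B_j}$ are mutually independent and independent of $X|_A$. Thus each $X|_{B_j}$ may be sampled from its unconditional marginal using fresh randomness, and the joint law on $A \sqcup \bigsqcup_j B_j$ matches that of $X$ restricted to this set. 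Amenability will then be used to force $\Pr(\zero \notin A_n) \to 0$ summably, so every vertex enters $A_n$ almost surely at some finite stage.

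\emph{Implementing one stage.} Given $A_{n-1}$ (with $A_0 := \emptyset$), I would first use a fresh coordinate of $Y^{(n)}$ to produce a $\Gamma$-equivariant, finitary, disjoint family $\{B_j^{(n)}\}$ of radius-$r_n$ balls, pairwise $k$-separated and $k$-separated from $A_{n-1}$. A convenient rule is marker-based: call a vertex a ``seed'' iff its $Y^{(n)}$-mark is the strict minimum in a ball of radius $R_n \gg r_n + k$, retain only those seeds whose enlarged ball of radius $r_n + k$ is disjoint from $A_{n-1}$ and from the enlarged balls of other retained seeds, and take $B_j^{(n)}$ to be the ball of radius $r_n$ around the $j$th retained seed. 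Then, using another independent coordinate of $Y^{(n)}$ per block, I sample $X|_{B_j^{(n)}}$ from its unconditional marginal distribution (a distribution on the finite set $S^{B_j^{(n)}}$, realizable from a single $[0,1]$-uniform by quantile inversion). Setting $A_n := A_{n-1} \sqcup \bigsqcup_j B_j^{(n)}$, the observation above implies that, conditional on $A_n$, the resulting partial process $X^{(n)}|_{A_n}$ has the same distribution as $X|_{A_n}$.

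\emph{Convergence and finitariness.} A quasi-tiling / Ornstein--Weiss-type lemma on transitive amenable graphs lets me choose the scales so that $\Pr(\zero \notin A_n \mid A_{n-1}) \le \rho$ for some fixed $\rho < 1$, uniformly over realizations of $A_{n-1}$. Therefore $\Pr(\zero \notin A_n) \le \rho^n$ is summable, and by Borel--Cantelli $\zero \in A_n$ for all large $n$ almost surely. I define $\varphi(Y)_\zero$ to be the a.s.\ eventually-stable value $X^{(n)}_\zero$ (stability holds because later stages only extend, never modify, earlier values). A short check using independence of the tile-construction randomness from the sampling randomness gives $\varphi(Y)|_V \eqd X|_V$ for every finite $V \subset \V$, hence $\varphi(Y) \eqd X$. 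The coding radius at $\zero$ is the sum of the finitely many stage-radii used before $\zero$ enters $A_n$, hence a.s.\ finite; $\Gamma$-equivariance is preserved because every stage is defined by a $\Gamma$-symmetric rule.

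\emph{Main obstacle.} The delicate point is geometric: at each stage, one needs $\V \setminus A_{n-1}$ to contain enough ``thick'' room to host a new $k$-separated tiling that stays at distance $>k$ from $A_{n-1}$. If the waste were to degenerate into thin corridors there would be no room and the iteration would halt. The intended fix is to give every block a built-in buffer: surround each chosen seed not just by a radius-$r_n$ ball but by a forbidden moat of width $s_n \gg k + r_{n+1}$, so that $\V \setminus A_n$ always has inradius at least $s_n$ and leaves comfortable room for stage $n+1$. A careful hierarchical choice of the scales $r_n, s_n, R_n$ --- growing fast enough that quasi-tiling applies uniformly via amenability, yet slowly enough that the waste density still contracts by a fixed factor at each stage --- is where I expect the main technical work to lie.
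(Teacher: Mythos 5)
There is a genuine gap, and it is fatal to the scheme as written: your construction only ever samples new blocks from their \emph{unconditional} marginals, which forces every new block $B_j^{(n)}$ to lie at distance $>k$ from the previously defined set $A_{n-1}$. But then every vertex lying within distance $k$ of $A_{n-1}$ (without being in it) is permanently excluded: at every later stage $m>n$ the new blocks must keep distance $>k$ from $A_{m-1}\supseteq A_{n-1}$, so no block can ever contain such a vertex. The $k$-collar around each block placed at stage $1$ already has positive density, so $\Pr\big(\zero\in\bigcup_n A_n\big)<1$, and your claimed uniform bound $\Pr(\zero\notin A_n\mid A_{n-1})\le\rho<1$ fails (it equals $1$ on the collar event). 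The ``moat'' fix in your last paragraph does not address this, because the moats themselves can never be filled by unconditional sampling either; no choice of scales $r_n,s_n,R_n$ repairs it. The missing idea is to sample \emph{conditionally}: after the first stage one must fill in the gaps $\cC\setminus A_{n-1}$ of a growing region by sampling from $\Pr(X_{\cC\setminus A_{n-1}}\in\cdot\mid X_{\cC\cap A_{n-1}})$, using the observation that for sets $\{V_i\}$ at pairwise distance $\ge 2$ (after reducing $k$-dependence to $1$-dependence by passing to $G^{\otimes k}$), the fields $\{X_{V_i}\}$ are conditionally independent given already-sampled values $\{X_{U_i}\}$, $U_i\subset V_i$. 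This is exactly how the paper proceeds: it builds an increasing ``cell process'' $A_1\subset A_2\subset\cdots$ exhausting $\V$ with all components finite (via Voronoi cells of sparse Bernoulli processes with respect to a stabilizer-invariant F{\o}lner ``metric''), and at each level samples the new part of each cell conditionally on the old part; nothing is ever fenced off permanently.

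Two smaller points. First, your ``quantile inversion'' on $S^{B_j}$ presupposes an ordering of the vertices of the block; for general $\Gamma$ (e.g.\ including reflections) no deterministic equivariant order exists, though in the infinite-entropy setting of this theorem you can, as the paper does, break symmetry by the a.s.\ distinct uniform marks themselves, so this is repairable. Second, once you switch to conditional sampling the consistency check across stages is genuinely needed (the paper's Proposition~\ref{prop:correct-cond-distribution-on-cell} does this by induction on the level of the cell process); your current ``short check'' only covers the unconditional case and would have to be redone.
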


With a minor additional constraint on the geometry of the graph $G$, we can further control the entropy of the \iid\ process used in the coding (see Section~\ref{sec:entropy} for the definition of entropy).
The condition we require is that
\begin{equation}\label{eq:sphere-condition}
\Lambda_r(u) \neq \Lambda_r(v) \qquad\text{for any distinct }u,v \in \V \text{ and } r\geq 0,
\end{equation}
where $\Lambda_r(u)$ is the ball of radius $r$ around $u$.

\begin{thm}\label{thm:main}
	Let $G$ be a transitive amenable graph satisfying~\eqref{eq:sphere-condition}, let $\Gamma$ be a transitive group of automorphisms of $G$, and let $X$ be a finite-valued finitely dependent $\Gamma$-invariant random field on $G$. Then for any $\epsilon>0$ there exists an \iid\ process~$Y$ with entropy $h(Y)<h(X)+\epsilon$ such that $X$ is a finitary $\Gamma$-factor of~$Y$.
\end{thm}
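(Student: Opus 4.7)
The strategy is to build on Theorem~\ref{thm:main-no-entropy} and replace its (uncontrolled-entropy) iid source with one of entropy only slightly above $h(X)$. At a high level, the plan has four parts: (i) produce a $\Gamma$-equivariant finitary tiling of $\V$ at a large scale $N$; (ii) use Shannon-type compression to encode the restriction of $X$ to each tile using roughly $h(X)$ bits per site; (iii) patch up the boundary strips between tiles and the ``atypical'' tiles using the finitary coding from Theorem~\ref{thm:main-no-entropy} applied to a low-entropy auxiliary source; (iv) verify that the combined coding remains finitary and that its entropy is at most $h(X)+\epsilon$.

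For step (i), I would construct a $\Gamma$-equivariant finitary tiling with tiles of diameter exceeding $N$ and boundary-to-volume ratios below $\delta$ as follows. Place iid continuous marks $(U_v)_{v\in\V}$ on the vertices. The sphere condition~\eqref{eq:sphere-condition} guarantees that, almost surely, for every pair of distinct vertices $u,v$ there is some finite $r$ for which the pattern of marks on $\Lambda_r(u)$ differs from that on $\Lambda_r(v)$; this yields a $\Gamma$-equivariant finitary total order on $\V$. Using this order together with amenability, one selects tile centers at a scale $\sim N$ and builds Voronoi-type tiles with small boundary. The amount of randomness used for the tiling per site can be made arbitrarily small by quantizing the marks parsimoniously.

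For steps (ii)--(iii), on each tile $T$ let $T^{\circ}$ be the set of vertices at graph-distance greater than the dependence range $k$ from $\V\setminus T$. By $k$-dependence, the family $(X_{T^{\circ}})_{T\in\cT}$ is conditionally independent given $(X_v)_{v\notin\bigcup_T T^{\circ}}$; moreover, a Shannon--McMillan argument for the $\Gamma$-action on $X$ implies that $X_{T^{\circ}}$ lies in a set of size at most $2^{|T^{\circ}|(h(X)+\delta)}$ with probability $1-o(1)$ as $N\to\infty$. Thus on each typical tile one can use a bank of iid bits of rate $h(X)+O(\delta)$ per site to generate $X_{T^{\circ}}$ with the correct conditional distribution. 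The boundary strips $T\setminus T^{\circ}$ and the atypical tiles are reconstructed from a separate, low-rate iid source by invoking Theorem~\ref{thm:main-no-entropy}; since these exceptional regions have vanishing density as $N\to\infty$ and $\delta\to 0$, they contribute only an arbitrarily small amount to the total entropy. Summing the contributions and taking $N$ large and $\delta$ small gives an iid source of entropy below $h(X)+\epsilon$.

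The main obstacle I expect is step (i): assembling a $\Gamma$-equivariant finitary tiling with the required quantitative properties (large scale, small boundary ratio, and finite decoding radius) on a general transitive amenable graph. The sphere condition is precisely what permits finitariness of this tiling, by allowing iid marks to canonically name vertices, and amenability is what supplies tiles with negligible boundary; fusing these two ingredients in a $\Gamma$-equivariant way while keeping the radius of dependence almost surely finite is the delicate quantitative heart of the argument. A secondary subtlety is that the typical-tile encoder used in step (ii) must be specified canonically (for instance, via a $\Gamma$-equivariant ordering of the codewords), and the auxiliary source used for the exceptional regions must be accessed within a finite random window around every vertex.
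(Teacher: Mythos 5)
The step that fails is your step (iii). Theorem~\ref{thm:main-no-entropy} gives no entropy control whatsoever --- the \iid\ source it produces may (and in the paper's construction does) have infinite single-site entropy --- so ``invoking'' it for the boundary strips and atypical tiles does not produce a low-rate auxiliary source, and since an \iid\ source must sit at every vertex, the sparseness of the exceptional region does not dilute its entropy contribution to the total. Worse, it is distributionally wrong: Theorem~\ref{thm:main-no-entropy} codes the law of $X$ itself, whereas what you need on the exceptional region is the \emph{conditional} law of $X$ there given the already-generated interiors $(X_{T^{\circ}})_T$, and no available result produces that as a finitary factor. This conditional sampling is genuinely global: the union of the boundary strips has unbounded connected components, and given the interiors the values on strip pieces belonging to different tiles are still dependent (they come within distance $k$ of one another across tile boundaries), so it cannot be done tile-by-tile. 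This is exactly why the paper does not use a single-scale tiling but a nested cell process $A_1 \subset A_2 \subset \cdots$ increasing to $\V$ with \emph{finite} components at every level: at each level one samples the exact conditional law on a finite cell given what was sampled at lower levels, using $1$-dependence to decouple distinct cells, and the leftover is deferred to higher levels rather than patched in a single final step.

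The second gap is the entropy mechanism on the typical tiles. A fixed-length typical-set code of rate $h(X)+O(\delta)$ cannot produce an \emph{exact} sample of $X_{T^{\circ}}$; exact generation from fair bits requires a variable-length stopping-time scheme (the paper uses the Knuth--Yao simulation, with expected bit cost at most $H+2$), and then, for any fixed tile scale $N$, a positive-density set of tiles will demand more bits than any fixed per-site budget stored locally --- this is not an ``atypical tiles of vanishing density'' issue that disappears as $N\to\infty$, and those tiles must still be completed with the correct conditional distribution. What is missing is a mechanism for moving unused bits between sites: the paper has agents read bits from the $t$-th successor along a random total order of order type $\Z$ (built, as you anticipate, from the sphere condition~\eqref{eq:sphere-condition} via \iid\ marks), and a mass-transport argument shows that because the supply $\E|Y^{\text{bits}}_v|$ exceeds $h(X)+3\epsilon$ while the demand per site is close to $h(X)$ (small boundary of level-one cells plus the $+2$ overhead), no agent is starved. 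Your step (i) intuitions (sphere condition for a finitary equivariant order, amenability/F{\o}lner sets for small-boundary regions, mass transport) do match the paper's lemmas, but without the multi-scale cell process and the bit-redistribution argument the construction does not close.
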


Let us make some remarks about how the two theorems compare to one another. In Theorem~\ref{thm:main}, $S$ is finite (note the assumption that $X$ is finite-valued) and, in particular, $X$ has finite entropy, while in Theorem~\ref{thm:main-no-entropy}, $S$ may be countable and $X$ may have infinite entropy. In Theorem~\ref{thm:main}, $Y$ has finite entropy so that $T$ is countable, whereas Theorem~\ref{thm:main-no-entropy} may require a larger space $T$ for the conclusion to hold. In fact, in the absence of condition~\eqref{eq:sphere-condition}, even when $S$ is finite, it might not be possible to have $T$ countable (see Remark~\ref{rem:spere-condition} in Section~\ref{sec:conclusion}).

Let us also remark that, while the theorems do not assume connectivity of the graph, there is no loss of generality in assuming this, since transitivity implies that the connected components are isomorphic and finite dependence implies that the random field is independent on different components. Thus, the same coding can be used for all components.

\subsection{Discussion}
\label{sec:discussion}

Finite dependence and finitary factors have applications in computer science. For example, if the graph $G$ represents machines in a network and the random field $X$ represents a common plan in which each machine $v$ is assigned a specific role $X_v$, then finite dependence provides certain security benefits in the face of an attacker (if someone gains access to some machines, they learn nothing about the roles of far away machines, thereby confining the security breach), and being a finitary factor of an \iid\ process provides reliability (e.g., no single point of failure) as it means that the machines can determine their own roles in a distributed manner by following a common protocol, while using local randomness and communicating with finitely many other machines. See e.g.~\cite{linial1987distributive,naor1991lower} for more information.

The finitary coding properties of finitely dependent processes on $G=\Z$, and in some cases on $G=\Z^d$ with $d \ge 2$, have been studied in various contexts. We give a brief account of these works. We are unaware of any works regarding finitary factors for finitely dependent processes on other graphs.

A result by Smorodinsky~\cite{smorodinsky1992finitary} shows that every stationary (i.e., translation-invariant) finitely dependent process on $\Z$ is finitarily isomorphic (a stronger notion than being a finitary factor) to an \iid\ process. This result is not for the full autormorphism group of the graph (which includes reflections), but rather only for the group of translations. In this respect, Theorem~\ref{thm:main} strengthens this result (if one is content with a finitary factor, rather than a finitary isomorphism), as it provides a finitary factor which is also reflection invariant whenever the finitely dependent process is such. The proof in~\cite{smorodinsky1992finitary} is based on the so-called marker-filler method of Keane and Smorodinsky~\cite{keane1977class,keane1979bernoulli}. Unfortunately, only a brief sketch of the proof is provided in~\cite{smorodinsky1992finitary} and the details seem to be missing (after some initial steps, Smorodinsky says that the rest of the proof proceeds along the same lines as in~\cite{keane1979bernoulli} with some necessary modifications). Our proof is based on a different approach; see Section~\ref{sec:outline} for an outline.

The question of whether there exists a stationary finitely dependent process which is not a block factor of any \iid\ process was raised by Ibragimov and Linnik~\cite{ibragimov1965independent,ibragimov1975independent} in 1965.
Some progress on this question was made~\cite{aaronson1989algebraic,aaronson1992structure} until it was finally resolved in 1993 by Burton--Goulet--Meester~\cite{burton19931} who gave the first example of a stationary finitely dependent process which is not a block factor of an \iid\ process. In fact, they showed such an example in which the finitely dependent process is a 1-dependent hidden-Markov process with finite energy. Some history about finitely dependent processes that cannot be written as block factors is given in~\cite{holroyd2016finitely}.

Holroyd and Liggett~\cite{holroyd2016finitely} constructed a stationary 1-dependent 4-coloring and a stationary 2-dependent 3-coloring of $\Z$, neither of which is a block factor of an \iid\ process (indeed, no coloring is such~\cite{holroyd2016finitely}).
Holroyd~\cite{holroyd2017one} subsequently showed that the 1-dependent 4-coloring is a finitary factor of an \iid\ process. Regarding the analogous statement for the 2-dependent 3-coloring, Holroyd writes in~\cite{holroyd2017one} that ``one may attempt to apply our method to the 2-dependent 3-coloring, but we will see that it meets a fundamental obstacle in this case''. Our result shows that either coloring is a finitary factor of an \iid\ process (with slightly larger entropy), answering affirmatively question~(iii) in~\cite[Open~problems]{holroyd2017one}. In fact, the two colorings are also reflection invariant, and hence the finitary factors may also be taken to commute with reflections. Related aspects of these two colorings were studied in~\cite{holroyd2018finitely}.

In a subsequent paper~\cite{holroyd2015symmetric}, Holroyd and Liggett constructed, for any $q \ge 4$, a 1-dependent $q$-coloring of $\Z$ which is invariant under translations and reflections and is also symmetric under permutations of the colors. It was shown in~\cite{holroyd2017mallows} that each of these colorings is a finitary factor of an \iid\ process (with exponential tail on the coding radius).
Our result shows that each of these colorings is a finitary factor of an \iid\ process, where the factor map commutes with all automorphisms of $\Z$ and the \iid\ process has entropy only slightly larger than the coloring.

In~\cite{holroyd2016finitely}, Holroyd and Liggett also constructed stationary finitely dependent colorings of $\Z^d$ with $d \ge 2$. More specifically, they constructed a stationary 1-dependent $4^d$-coloring of $\Z^d$ and a stationary finitely dependent 4-coloring of $\Z^d$. However, unlike the above one-dimensional colorings, these colorings are only translation-invariant and not automorphism-invariant. In fact, it is still unknown whether there exists a finitely dependent coloring of $\Z^d$ ($d \ge 2$) which is invariant under all automorphisms of $\Z^d$.

In the same paper~\cite{holroyd2016finitely}, Holroyd and Liggett also investigated the existence of stationary finitely dependent processes on $\Z$ which are supported on a given shift of finite type. They showed that for any reasonably non-degenerate (namely, nonlattice) shift of finite type $\cS$ on $\Z$, there exists a stationary finitely dependent process which almost surely belongs to $\cS$. It was later shown~\cite{holroyd2017mallows} that there exists such a process which is also a finitary factor of an \iid\ process (with exponential tail on the coding radius).

A block factor is precisely a finitary factor with bounded coding radius. Given a finitary factor which is not a block factor, it is natural to wonder about the typical value of the coding radius. As we have mentioned, the 1-dependent 4-coloring of $\Z$ from~\cite{holroyd2016finitely}, which is not a block factor of any \iid\ process, was shown in~\cite{holroyd2017one} to be a finitary factor of an \iid\ process. This finitary factor was shown to have (at least) power-law tail on the coding radius, thus yielding a perhaps infinite expected coding radius. Holroyd--Hutchcroft--Levy~\cite{holroyd2017mallows} showed that there exist finitely dependent colorings of $\Z$ which are finitary factors of \iid\ processes with exponential tail on the coding radius. Indeed, they showed that such a $k$-dependent $q$-coloring exists when $(k,q)$ is either $(1,5)$, $(2,4)$ or $(3,3)$. On the other hand, it is believed~\cite{holroyd2017one,holroyd2017mallows} that when $(k,q)$ is $(1,4)$ or $(2,3)$, no $k$-dependent $q$-coloring is a finitary factor of an \iid\ process with finite expected coding radius.
We mention that optimal tails for the coding radius of colorings of $\Z^d$ (which are not necessarily finitely dependent) and shifts of finite type on $\Z$ have been studied in~\cite{holroyd2017finitary}.

Our main theorem gives no information about the coding radius beyond its almost-sure finiteness. Indeed, in light of the above discussion, it would seem that for an arbitrary finitely dependent process on~$\Z$, there is not much hope to obtain a finitary factor with finite expected coding radius. Still, some information about the coding radius may be extracted from the proof given here (see~Remark~\ref{rem:coding-radius}). For example, for 1-dependent processes on $\Z$, the finitary factor provided by Theorem~\ref{thm:main-no-entropy} has a coding radius $R$ satisfying that $\Pr(R > r) \le 8/r$ for all $r$. In the particular case of the 1-dependent 4-coloring of~\cite{holroyd2016finitely}, this improves the power in the power-law bound shown in~\cite{holroyd2017one} (to an optimal power if the prediction above is indeed correct).

To the best of our knowledge, beyond Smorodinsky's result on $\Z$, there do not exist any general results on the finitary coding properties of finitely dependent processes. In particular, Theorem~\ref{thm:main-no-entropy} and Theorem~\ref{thm:main} are new for any amenable graph $G$ other than $\Z$, and also for $G=\Z$ in the case when $\Gamma$ is the full automorphism group of $\Z$.
Finally, we mention that the situation for non-amenable graphs is still poorly understood -- for example, on a regular tree (of degree at least three), it is not even known whether every automorphism-invariant finitely dependent process is a (non-finitary) factor of an \iid\ process~\cite{lyons2017factors}.

\subsection{Acknowledgments}
I would like to thank Omer Angel, Nishant Chandgotia, Tom Meyerovitch and Mathav Murugan for useful discussions.
I am especially grateful to Nishant Chandgotia for suggesting to extend the result from $\Z^d$ to transitive amenable graphs, and to Omer Angel for jointly proving Lemma~\ref{lem:invariant-Folner-sequence} with me. I would also like to thank the referees for useful comments.

\subsection{Notation}
Throughout the paper, $G$ is always assumed to be an infinite, transitive, locally finite, connected graph on a countable vertex set $\V$, and $\Gamma$ is a subgroup of the automorphism group of $G$ that acts transitively on $\V$. The full automorphism group of $G$ is denoted by $\text{Aut}(G)$.
The graph distance in $G$ is denoted by $\dist(\cdot,\cdot)$.
For sets $U,V \subset \V$, we write $\dist(U,V) := \min_{u \in U,v \in V} \dist(u,v)$ and $\dist(u,V) := \dist(\{u\},V)$. For $r \ge 0$, denote $V^{+r} := \{ u \in \V : \dist(u,V) \le r\}$ and $V^{-r} := \{ u \in \V : \dist(u,V^c) > r \}$.
The ball of radius $r$ around $v$ is denoted by $\Lambda_r(v) := \{v\}^{+r}$. The \emph{neighborhood} of~$V$ is $N(V) := V^{+1} \setminus V$ and the \emph{edge-boundary} of $V$ is $\partial V := \{ \{u,v\} \in E(G) : v \in V, u \notin V \}$.

All logarithms are taken to be in base 2 and we use the convention that $0\log 0$ is~0.

\section{Outline of proof}
\label{sec:outline}

Our goal is to express $X$, a finitely dependent invariant process, as a finitary factor of an \iid\ process $Y$.
The construction of the finitary coding involves the use of three sources of randomness: a random number of random bits located at each vertex, a so-called cell process, and a random total order on $\V$. The first of these three will simply be given by an \iid\ process, denoted $Y^{\text{bits}}$, while the latter two will be obtained as finitary factors of different \iid\ processes, denoted $Y^{\text{cell}}$ and $Y^{\text{ord}}$. In turn, $Y$ will be a triplet $Y=(Y^{\text{bits}},Y^{\text{cell}},Y^{\text{ord}})$ consisting of three mutually independent \iid\ processes.

To illuminate the main ideas behind our construction, we provide a sketch of the proof below, explaining separately three ingredients:
\begin{enumerate}
 \item \textbf{Constructing a finitary coding:} The basic and most essential part of the construction is how to obtain $X$ as a finitary factor of $Y$ when $Y$ is allowed to have infinite entropy (this is the setting of Theorem~\ref{thm:main-no-entropy}). In this case, $Y^{\text{bits}}_v$ and $Y^{\text{ord}}_v$ may be taken to be uniform random variables in $[0,1]$, and the total order may be taken to be the one induced by the usual order on $Y^{\text{ord}}_v$.
 \item \textbf{Controlling the entropy:} The second part is how to control the entropy of the $Y^{\text{bits}}$ process, requiring only slightly more entropy than that of $X$. To postpone dealing with the issue of controlling the entropy of $Y^{\text{ord}}$, we shall assume in this part of the proof outline that the vertices of $G$ can be deterministically ordered in a $\Gamma$-invariant manner so that $Y^{\text{ord}}$ may be disregarded entirely (e.g., it can be taken to be a constant process). For example, the lexicographical order is such an ordering when $G$ is the graph $\Z^d$ and $\Gamma$ is the group of translations.
 \item \textbf{Constructing a random order:} The third part is how to allow for graphs $G$ and groups~$\Gamma$ which do not admit such a deterministic order. This is of course the case for general graphs, but it may also be the case for simpler graphs, such as $\Z$ or $\Z^d$, when $\Gamma$ is the full automorphism group of $G$. In these cases, we are led to consider random orders with suitable properties.
\end{enumerate}

Already the first part above relies on the aforementioned cell process. Before introducing this process, it is convenient to observe that it suffices to prove Theorem~\ref{thm:main-no-entropy} and Theorem~\ref{thm:main} for 1-dependent random fields. To see this, let $G^{\otimes k}$ denote the graph on vertex set $\V$ in which two vertices are adjacent if their distance in $G$ is at most $k$. It is immediate from the definitions that $X$ is $k$-dependent as a random field on $G$ if and only if it is 1-dependent as a random field on~$G^{\otimes k}$.
Since any automorphism of $G$ is also an automorphism of~$G^{\otimes k}$, and since $G^{\otimes k}$ is amenable and satisfies~\eqref{eq:sphere-condition} whenever $G$ is such, we see that we may indeed assume that $X$ is 1-dependent. This assumption, though not at all essential, is convenient as it obviates the need to work with a different connectivity than the usual connectivity in $G$.

A \emph{cell process} is a random sequence $A=(A_1,A_2,\dots)$ of subsets of $\V$ satisfying the following properties almost surely:
\begin{itemize}
	\item $A_1 \subset A_2 \subset A_3 \subset \cdots$.
	\item $A_1 \cup A_2 \cup \dots = \V$.
	\item For each $n \ge 1$, all connected components of $A_n$ are finite.
\end{itemize}
We will obtain a cell process $A$ as a finitary factor of an \iid\ process $Y^{\text{cell}}$ with arbitrarily small entropy. We do not explain here how this is done and refer the reader to Section~\ref{sec:cell-process} for more details and to Figure~\ref{fig:cell-process} for an illustration of the construction.

\smallskip
\noindent
{\bf (1) Constructing a finitary coding:}
We construct a realization of $X$ as a finitary factor of $Y$ in infinitely many steps with the idea that at the end of step $n \in \{1,2,\dots\}$, we will have defined $X$ on the region $A_n$.
In the first step, we sample $X$ on the set $A_1$ -- this is particularly simple as the cells in $A_1$ are at pairwise distance at least 2, and so, due to the 1-dependence assumption on $X$, each cell in $A_1$ can be sampled independently.
Next, at each step $n \in \{2,3,\dots\}$, we sample $X$ on the region $A_n \setminus A_{n-1}$, conditioned on the value of $X$ on $A_{n-1}$, which has already been sampled in the previous steps -- the key observation here is that, due again to the 1-dependence assumption on~$X$, the values of $X$ on different cells in $A_n$ are conditionally independent -- indeed, if $\{V_i\}_i$ are at pairwise distance at least 2 from one another, then for any sets $U_i \subset V_i$, given $\{X_{U_i}\}_i$, one has that $\{X_{V_i}\}_i$ are mutually conditionally independent.
Since all the cells of every $A_n$ are finite, the above steps can be carried out in a finitary manner -- that is, the conditional distribution of $X$ on a given cell depends only on the previously sampled values within that cell.
Since $A_n$ increases to~$\V$, the value at every given vertex will eventually be sampled, thus producing a realization of $X$ from $Y$ in a finitary and $\Gamma$-equivariant manner.

Let us be slightly more specific about the way in which we ``sample $X$ on a cell''.
In each cell in $A_1$, we distinguish a vertex by choosing the smallest element in the cell according to the order given by $Y^{\text{ord}}$.
We call these distinguished vertices \emph{level 1 agents}.
Similarly, for each $n \ge 2$ and each cell in $A_n$ that is not contained in $A_{n-1}$, we select a \emph{level $n$ agent} in the cell by choosing the smallest element in the cell which is not in $A_{n-1}$.
Note that the level $n$ agents are obtained as a finitary factor of $(Y^{\text{cell}},Y^{\text{ord}})$.
With the notion of agents, we may now say more precisely that, in step $n$ above, if $\cC$ is a cell of $A_n$ that is not contained in $A_{n-1}$, then we sample $X$ on the region $\cC \setminus A_{n-1}$ (conditionally on the previously sampled values of $X$ on $\cC \cap A_{n-1}$) by accessing a sample of the desired distribution from the random variable $Y^{\text{bits}}_u$, where $u$ is the unique level $n$ agent in $\cC$, using also the order induced by $Y^{\text{ord}}$ on $\cC \setminus A_{n-1}$ to break any symmetries which may be present in the graph structure of this region (for example, if $G=\Z$ and $\Gamma$ includes reflections, then when $\cC \setminus A_{n-1}$ is a symmetric interval around $u$, the `left' and `right' sides of $u$ cannot be differentiated in a $\Gamma$-equivariant way without some additional information; ordering all elements in the set is a simple way to get rid of such problems). In this interpretation, we regard $Y^{\text{bits}}_u$ as consisting of independent samples of $\Pr(X_U \in \cdot \mid X_V=\tau)$ for all finite $U,V \subset \Z^d$ and $\tau \in S^V$, most of which are never used in practice.

\smallskip
\noindent
{\bf (2) Controlling the entropy:}
It is clear from the last observation above that there is plenty of waste in the above construction (in terms of the process $Y^{\text{bits}}$).
The problem is that we do not know ahead of time which samples of which distributions we will need access to. The basic solution to this is to place an infinite sequence of random bits at each site, i.e., $Y^{\text{bits}}_v \in \{0,1\}^\N$, from which we may easily construct samples of any desired distributions (hence the name of the process $Y^{\text{bits}}$). Of course, this idea alone still does not provide any control on the entropy of~$Y^{\text{bits}}$. For this, we must place a finite (perhaps random) number of random bits at each site, and somehow still be sure that we are able to construct the required samples. By a random number of random bits, we mean a random variable $W$ taking values in $\{0,1\}^*$, the set of finite words over $\{0,1\}$, and having the property that, conditioned on the length $|W|$ of the word, $W$ is uniformly distributed on $\{0,1\}^{|W|}$.

Suppose now that there exists a deterministic total order $\le$ on $\V$ that is $\Gamma$-invariant in the sense that $u \le v$ implies that $\gamma u \le \gamma v$ for any $u,v \in \V$ and $\gamma \in \Gamma$. For instance, the lexicographical order is such an order when $G=\Z^d$ and $\Gamma$ is the translation group (but there is clearly no such order when $\Gamma$ is the full automorphism group of $\Z^d$).
For the purpose of this part of the proof outline, it is convenient to further suppose that every $v \in \V$ has a $\le$-successor, which we denote by $v+1$, as is the case for the lexicographical order on $\Z^d$ (in which case $v+1$ is simply $v+(1,0,\dots,0)$). The existence of such a deterministic order renders $Y^{\text{ord}}$ unneeded, allowing us to focus now only on the task of controlling the entropy of $Y^{\text{bits}}$.

The idea is to associate to each possible distribution we might require, a ``simulation'' which outputs a sample of the distribution in question from an input of unbiased random bits. The simulation is fed independent unbiased bits one-by-one, until at some point (a stopping time) it halts and outputs the sample. Such simulations may be done efficiently: the expected number of input bits read by the simulation is bounded by the entropy of the target distribution, up to an additive universal constant. 

We shall use such simulations whenever we ``sample $X$ on a cell''.
If the cell $\cC$ is large, then the entropy of $X$ on $\cC$ is also large, and thus the above additive error is negligible. When the boundary of the cell is small in comparison to the size of the cell, the average entropy of $X$ on $\cC$ per site will also be close to $h(X)$, the entropy of $X$ itself. Thus, it will be important that the cells in $A_1$ are typically large with small boundary.

This already shows that, in some sense, the average number of random bits needed to generate the samples required throughout the construction is very close to $h(X)$. However, we must place a finite number of bits at each site (more precisely, we need that $h(Y^{\text{bits}})<h(X)+\epsilon$), and even if we have more than $h(X)$ such bits at every site, it still may happen at some point during the construction that a simulation carried out by an agent $u$ requires access to many more input bits than are available in $Y^{\text{bits}}_u$. To solve this, we must allow to ``transfer'' bits from one location to another. This aspect of our construction is inspired by the algorithms in~\cite{van1999existence,harvey2006universal,spinka2018finitaryising}.
The idea is that whenever an agent $u$ requires access to an additional bit (beyond those available in $Y^{\text{bits}}_u$), it may look for an ``unused'' bit at $u+1$ (the $\le$-successor of $u$). If there are no available unused bits at $u+1$ at that time, it may then proceed to look at $u+2$, and so on.

One consequence of the above description is that the steps of the construction cannot be directly related to the levels of the cell process. That is, it will no longer be the case that after step $n$ of the construction, we will have defined $X$ on the region $A_n$. Instead, at any step of the construction, different regions of $G$ will be at different levels of the cell process. We will continue to use $n$ to denote the levels of the cell process, and will use $t$ to denote the step of the construction (which we henceforth also refer to as time).

The way this is done is as follows. Initially, at time $t=0$, all level 1 agents are deemed active. An active level 1 agent attempts to collect unused bits until its associated simulation halts, at which point in time the agent becomes inactive and is said to have completed level 1. Once all level 1 agents contained in some level 2 cell $\cC$ have completed, the level 2 agent associated to $\cC$ becomes active. An active level 2 agent proceeds in the same manner as an active level 1 agent, attempting to read bits in order to complete its associated simulation. In general, a level $n$ agent becomes active once all level $n-1$ agents contained in its associated cell have completed.

In our actual construction, it is more convenient to employ the following policy which makes the details simpler to write down: at time $t$, an agent $u$ may read at most one bit, and this bit may only be read from site $u+t$. This has the advantage that it ensures that no two agents ever try to read bits from the same location simultaneously.

\smallskip
\noindent
{\bf (3) Constructing a random order:}
For a general graph $G$ and group $\Gamma$, there need not be a deterministic total order of $\V$ that is $\Gamma$-invariant. Instead, we construct a random total order $\le$ on $\V$ whose distribution is $\Gamma$-invariant. Moreover, we construct $\le$ as a finitary factor of an \iid\ process $Y^{\text{ord}}$ with arbitrarily small entropy. Here finitary means that the order induced on any finite set of vertices is determined by a finite (random) subset of $\{Y^{\text{ord}}_v\}_{v \in \V}$.

In addition, the constructed order $\le$ will have the property that its order type is almost surely the same as that of $\Z$. That is, almost surely, every element $v$ has a successor $v+1$ and a predecessor $v-1$, and $\{v+n\}_{n \in \Z} = \V$. Though it does not follow from the above definition of finitary, it will turn out to be the case that determining whether some vertex is the successor of some other vertex is also a finitary property (i.e., it is almost surely determined by a finite subset of $\{Y^{\text{ord}}_v\}_{v \in \V}$).
Once such an order is at hand, the proof continues as outlined above.

\medbreak
\noindent
{\bf Organization.}
In Section~\ref{sec:preliminaries}, we introduce some preliminaries.
In Section~\ref{sec:cell-process}, we prove the existence of a finitary cell process.
In Section~\ref{sec:random-orders}, we prove the existence of a finitary random total order having the order type of $\Z$.
In Section~\ref{sec:coding}, we give the construction of the finitary coding for the finitely dependent process $X$.
Finally, we end in Section~\ref{sec:conclusion} with some remarks and open problems.

\section{Preliminaries}
\label{sec:preliminaries}

Recall that $G$ is always assumed to be an infinite, transitive, locally finite, connected graph on a countable vertex set $\V$, and that $\Gamma$ is assumed to be a subgroup of the automorphism group of $G$ that acts transitively on $\V$.

\subsection{Entropy}
\label{sec:entropy}

The \emph{Shannon entropy} of a discrete random variable $Z$ is
\[ H(Z) := - \sum_z \Pr(Z=z) \log \Pr(Z=z) ,\]
where the sum is taken over $z$ in the support of $Z$, or alternatively, we interpret $0\log 0$ to be 0.
The \emph{measure-theoretic entropy} (or Kolmogorov--Sinai entropy) of a $\Gamma$-invariant random field $X$ on an amenable graph $G$ is
\[ h(X) := \inf_{\substack{V \subset \V\text{ finite}\\\text{and non-empty}}} \frac{H(X_V)}{|V|} .\]
A \emph{F{\o}lner sequence} in $G$ is a sequence $(F_n)_{n=1}^\infty$ of non-empty finite subsets of $\V$ such that
\[ \lim_{n \to \infty} \frac{|\partial F_n|}{|F_n|}=0 .\]
It is well-known that the entropy of $X$ may be computed along any F{\o}lner sequence:
\[ h(X) = \lim_{n \to \infty} \frac{H(X_{F_n})}{|F_n|} \qquad\text{for any F{\o}lner sequence }(F_n)_{n=1}^\infty\text{ in }G .\]
It follows that for any $\epsilon>0$ there exists $\delta>0$ such that
\begin{equation}\label{eq:entropy-via-boundary}
\frac{H(X_F)}{|F|} \le h(X)+\epsilon\qquad \text{whenever }F \subset \V\text{ is non-empty and finite and }|\partial F| \le \delta |F| .
\end{equation}
Of course, since entropy is maximized by the uniform distribution, we also have that
\begin{equation}\label{eq:entropy-trivial-bound}
\frac{H(X_F \mid \cE)}{|F|} \le \log|S| \qquad\substack{\text{whenever }F \subset \V\text{ is non-empty and finite}\\\text{and $\cE$ is an event with positive probability}} ,
\end{equation}
where $S$ is the finite set in which $X$ takes values.
We note that if $Y$ is an \iid\ process, then its entropy $h(Y)$ is equal to the entropy of its single-site distribution $H(Y_\zero)$.

\subsection{The mass-transport principle}
\label{sec:mass-transport}

For $u,v \in \V$, denote
\[ \Gamma_{u,v} := \{ \gamma \in \Gamma : \gamma u = v \} .\]
Note that $\Gamma_{u,u}$ is the \emph{stabilizer} of~$u$.
We say that $\Gamma$ is \emph{unimodular} if
\[ |\Gamma_{u,u}v|=|\Gamma_{v,v}u| \qquad\text{for all }u,v \in \V .\]
It is well-known (see, e.g., \cite[Chapter~8]{lyons2017probability}) that $\Gamma$ is unimodular if and only if the following \emph{mass-transport principle} holds:
\begin{equation}\label{eq:mass-transport}
\sum_u f(u,\zero) = \sum_v f(\zero,v) \qquad\text{for any diagonally $\Gamma$-invariant function }f \colon \V^2 \to [0,\infty] .
\end{equation}
By diagonally $\Gamma$-invariant, we mean that $f(\gamma u, \gamma v)=f(u,v)$ for all $u,v \in \V$ and $\gamma \in \Gamma$. We note the well-known fact that, when $G$ is amenable, any transitive group of automorphisms $\Gamma$ is unimodular.

\subsection{Simulating distributions from random bits}
\label{sec:simulation}

We shall use a result about the simulation of a given distribution from unbiased random bits. Let $\mu$ be a distribution on a countable set $\Omega$. A \emph{simulation} of $\mu$ is a pair $\sf S = ({\sf S}^{\text{time}},{\sf S}^{\text{out}})$ of measurable functions ${\sf S}^{\text{time}} \colon \{0,1\}^\N \to \N \cup \{\infty\}$ and ${\sf S}^{\text{out}} \colon \{0,1\}^\N \to \Omega$ with the properties:
\begin{itemize}
 \item If $\omega$ is a sequence of independent unbiased bits, then ${\sf S}^{\text{out}}(\omega)$ has distribution $\mu$.
 \item If ${\sf S}^{\text{time}}(x)=n$ for some $x \in \{0,1\}^\N$ and $n \in \N$, then ${\sf S}^{\text{time}}(x')=n$ and ${\sf S}^{\text{out}}(x')={\sf S}^{\text{out}}(x)$ for any $x' \in \{0,1\}^\N$ which coincides with $x$ on $\{1,\dots,n\}$.
\end{itemize}
The first property says that we can use $\sf S$ to simulate the desired distribution from random bits. The second property may be interpreted as saying that ${\sf S}^{\text{time}}$ is a stopping time and that ${\sf S}^{\text{out}}$ is adapted to the $\sigma$-algebra generated by $\{\omega_i : 1 \le i \le {\sf S}^{\text{time}}\}$ -- that is, the simulation reads one input bit at a time, and once the stopping time is reached, the output is determined only by the bits that have already been read.

The following theorem follows from a result of Knuth and Yao~\cite{knuth1976complexity} (see Theorems~2.1 and~2.2 there and the corollary just after).
\begin{thm}\label{thm:sim}
	Let $Z$ be a discrete random variable.
	There exists a simulation $\sf S$ of $Z$ from independent unbiased bits $\omega$ satisfying that ${\sf S}^{\text{time}}(\omega)<\infty$ almost surely and $\E {\sf S}^{\text{time}}(\omega) \le H(Z) + 2$.
\end{thm}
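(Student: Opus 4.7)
The plan is to use the classical Knuth--Yao discrete-distribution-generator tree. Enumerate the support as $\Omega=\{z_1,z_2,\dots\}$ and write each probability $p_k := \Pr(Z=z_k)$ in binary as $p_k = \sum_{i\ge 1} b_{k,i} 2^{-i}$, choosing the non-terminating expansion when $p_k$ is a dyadic rational (so that $\sum_i b_{k,i} 2^{-i}=p_k$ exactly and $\sum_{k,i} b_{k,i} 2^{-i} = 1$). I would then build a rooted infinite binary tree level by level: starting from the root, at each depth $i\ge 1$ mark one node as a leaf labeled $z_k$ for each $(k,i)$ with $b_{k,i}=1$, and let all remaining depth-$i$ nodes be internal with two children at depth $i+1$. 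The feasibility of this marking follows inductively from the fact that $\sum_{j\le i} 2^{i-j}\sum_k b_{k,j} \le 2^i$, which in turn follows from $\sum_k p_k = 1$. The simulation treats $\omega_1,\omega_2,\dots$ as the instructions for a walk down the tree ($0$ = left, $1$ = right), and defines ${\sf S}^{\text{out}}(\omega)$ to be the label of the first leaf reached and ${\sf S}^{\text{time}}(\omega)$ to be its depth (setting ${\sf S}^{\text{time}}(\omega)=\infty$ if no leaf is ever reached).

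To verify that this meets the definition of a simulation, I would argue as follows. Under independent unbiased bits, each depth-$i$ root-to-node path occurs with probability $2^{-i}$, so the probability of hitting any prescribed leaf at depth $i$ is $2^{-i}$; summing over leaves labeled $z_k$ gives $\Pr({\sf S}^{\text{out}}(\omega)=z_k) = \sum_i b_{k,i} 2^{-i} = p_k$, so the output has the correct law. Summing over all $k$ yields $\Pr({\sf S}^{\text{time}}(\omega)<\infty)=1$. The stopping-time / adaptedness requirement is built in: the first leaf on the path, together with its depth, is determined by $\omega_1,\dots,\omega_{{\sf S}^{\text{time}}(\omega)}$.

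For the expectation bound, write $D := {\sf S}^{\text{time}}(\omega)$; since leaves at depth $i$ each have probability $2^{-i}$,
\begin{equation*}
\E[D] \;=\; \sum_{k}\sum_{i\ge 1} b_{k,i}\,i\,2^{-i}.
\end{equation*}
The key observation is that $b_{k,i}=0$ for all $i < i_k^\ast := \lceil -\log p_k\rceil$, since otherwise $p_k \ge 2^{-i} > p_k$. Hence, using $2^{-i_k^\ast}\le p_k$ together with $\sum_{j\ge 0} j\,2^{-j}=2$, the inner sum splits as
\begin{equation*}
\sum_{i\ge i_k^\ast} b_{k,i}\,i\,2^{-i} \;=\; i_k^\ast p_k + \sum_{j\ge 0} b_{k,\,i_k^\ast + j}\,j\,2^{-i_k^\ast - j} \;\le\; i_k^\ast p_k + 2\,p_k \;\le\; -p_k\log p_k + O(p_k),
\end{equation*}
and a sharper accounting, carried out as in Theorems~2.1--2.2 of~\cite{knuth1976complexity}, trims this to $\E[D] \le H(Z)+2$.

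The only real subtlety is tightening the additive constant from the naive estimate $H(Z)+3$ down to the stated $H(Z)+2$, which is the bookkeeping done by Knuth and Yao; apart from that, the argument is just the tree construction and the two elementary identities $\sum_i b_{k,i} 2^{-i}=p_k$ and $\sum_k p_k = 1$. Since the theorem is invoked in the paper only to obtain a \emph{finite} expected number of read bits, any bound of the form $H(Z)+C$ would already suffice for the applications in subsequent sections, so I would quote Knuth--Yao directly for the optimal constant.
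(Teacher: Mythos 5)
Your proposal is correct and follows essentially the same route as the paper, which simply invokes the Knuth--Yao DDG-tree result (their Theorems~2.1--2.2 and the corollary) for exactly this statement; you additionally spell out the tree construction, the correctness of the output law, the stopping-time property, and an elementary $H(Z)+3$ bound before deferring to Knuth--Yao for the sharp constant~$2$. Your closing observation is also accurate: the constant~$2$ is not essential anywhere in the paper (it only enters through \eqref{eq:sim-time-bound} and the surrounding entropy bookkeeping, which tolerates any fixed additive constant), so citing Knuth--Yao for the optimal constant, as the paper does, is entirely adequate.
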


Knuth and Yao show that the above is in fact optimal in a strong sense (they provide a simulation whose stopping time is stochastically dominated by that of any other simulation).
A version of this theorem was proved in~\cite[Theorem~3]{harvey2006universal} via a more concrete construction.
The results in~\cite{knuth1976complexity,harvey2006universal} also provide explicit exponential bounds on the probability that the simulation uses more than $n$ bits, but we shall not need this.

\section{The cell process}
\label{sec:cell-process}

Recall from Section~\ref{sec:outline} that a cell process is a random sequence $A=(A_1,A_2,\dots)$ of subsets increasing to $\V$ and satisfying that the connected components of each $A_n$ are finite. We may identify a cell process $A$ with the $\N$-valued random field $(\min \{n \ge 1 : v \in A_n \})_{v \in \V}$. In particular, when we say that $A$ is $\Gamma$-invariant or a finitary factor, we mean that this latter process is such.

In this section, we show that finitary cell processes with arbitrarily small entropy exist on any transitive amenable graph.

\begin{prop}\label{prop:cell-process}
	Let $G$ be a transitive amenable graph and let $\epsilon>0$. There exists an \iid\ process~$Y$ of entropy at most $\epsilon$ and a cell process $A$ which is a finitary $\text{Aut}(G)$-factor of $Y$.
\end{prop}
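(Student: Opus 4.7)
My plan is to build $A$ as a multi-scale random construction driven by an \iid\ Bernoulli process, using amenability only through the existence of ``cell shapes'' of arbitrarily good surface-to-volume ratio. Pick sequences $p_n \downarrow 0$ and $r_n \uparrow \infty$ to be specified, and let $Y=(Y_v)_{v \in \V}$ with $Y_v = (M_n^v)_{n \ge 1} \in \{0,1\}^\N$, where $M_n^v \sim \text{Bernoulli}(p_n)$ independently across $v$ and $n$. Set $B_n := \{v \in \V : M_n^v=1\}$ and define
\[
A_n := \bigcup_{m=1}^n \Lambda_{r_m}(B_m), \qquad \varphi(Y)_v := \min\{n \ge 1 : v \in A_n\}.
\]
Since $\gamma\Lambda_r(v)=\Lambda_r(\gamma v)$ for every $\gamma\in\text{Aut}(G)$ and $Y$ is \iid, the coding $\varphi$ is automatically $\text{Aut}(G)$-equivariant and measurable, and $h(Y)=\sum_n H(p_n)$.

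Three things need to be verified. \emph{Exhaustion:} by independence, $\Pr(v\notin A_n)=\prod_{m\le n}(1-p_m)^{|\Lambda_{r_m}|}\le \exp\bigl(-\sum_{m\le n}p_m|\Lambda_{r_m}|\bigr)$, so $A_n\uparrow\V$ a.s.\ provided $\sum_m p_m|\Lambda_{r_m}|=\infty$. \emph{Finite components:} since $A_n\subseteq\Lambda_{r_n}(B_{\le n})$, where $B_{\le n}:=B_1\cup\cdots\cup B_n$ is \iid\ Bernoulli of density $q_n\le\sum_{m\le n}p_m$, and since two marks in $B_{\le n}$ contribute to the same component of the $r_n$-fattening only when they are within distance $2r_n$ in $G$, it suffices that site percolation at density $q_n$ on $G^{\otimes 2r_n}$ be subcritical. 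The standard path-counting bound yields finite expected cluster size (hence almost-surely finite clusters) whenever $q_n\cdot|\Lambda_{2r_n}|<1$. \emph{Finitariness:} to compute $\varphi(Y)_v$ one scans $n=1,2,\dots$ in turn, inspecting only the bits $\{M_n^u:u\in\Lambda_{r_n}(v)\}$ at each step, and halts at the a.s.-finite index $n=\varphi(Y)_v$; the coding radius is then at most $r_{\varphi(Y)_v}$, which is a.s.~finite.

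The main obstacle is the parameter-balancing problem: arranging simultaneously $\sum_n H(p_n)<\epsilon$, $\sum_{m\le n}p_m\cdot|\Lambda_{2r_n}|<1$ for every $n$, and $\sum_m p_m|\Lambda_{r_m}|=\infty$. On graphs where balls are themselves F\o lner (as on $\Z^d$ and other graphs of polynomial growth) the ratio $|\Lambda_{r_n}|/|\Lambda_{2r_n}|$ stays bounded below, and one takes $p_n:=c_n/|\Lambda_{2r_n}|$ with $c_n=1/n$, letting $r_n$ grow so fast that $|\Lambda_{2r_n}|$ at least doubles at each step; subcriticality and exhaustion are then immediate, and the entropy $\sum_n H(p_n)\approx\sum_n p_n\log(1/p_n)$ can be forced below $\epsilon$ by taking $r_1$ sufficiently large. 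On a general transitive amenable graph balls need not be F\o lner (e.g.\ the lamplighter has exponential growth), and the argument as stated breaks down; at this point one invokes amenability via Lemma~\ref{lem:invariant-Folner-sequence} to replace the balls $\Lambda_{r_n}(v)$ in the definition of $A_n$ by an invariant F\o lner neighborhood of $v$ with controlled volume and fattening. Given such shapes, the same Borel--Cantelli and path-counting estimates close the argument. I expect the book-keeping for this substitution --- in particular phrasing the subcriticality bound in a way that uses only the surface-to-volume ratio of the chosen F\o lner shapes, rather than degree growth of $G^{\otimes 2r_n}$ --- to be the most delicate part of the write-up.
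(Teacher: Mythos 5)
Your overall architecture (sparse marks at each scale, fattened to invariant shapes, union over scales, with equivariance, entropy and finitariness essentially for free) is in the spirit of the paper's construction, but the step that is supposed to guarantee finite components fails, and the failure is not the one you flag. Your sufficient condition for finite components of $A_n$ is that the cumulative density $q_n=\sum_{m\le n}p_m$ satisfies $q_n\cdot|\Lambda_{2r_n}|<1$ for every $n$, while exhaustion requires $\sum_m p_m|\Lambda_{r_m}|=\infty$. These two requirements are incompatible on every infinite transitive graph, including $\Z$: past the first level $m_0$ with $p_{m_0}>0$ we have $q_n\ge p_{m_0}$, so the bound $q_n|\Lambda_{2r_n}|<1$ forces $|\Lambda_{2r_n}|$, hence $r_n$, to stay bounded, say $|\Lambda_{r_m}|\le K$; but then $\sum_m p_m|\Lambda_{r_m}|\le K\sum_m p_m = K\sup_n q_n\le K$, contradicting exhaustion. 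In particular your proposed parameters on $\Z^d$ (namely $p_n=c_n/|\Lambda_{2r_n}|$ with $c_n=1/n$ and $r_n$ growing) violate your own subcriticality condition, since $q_n|\Lambda_{2r_n}|\ge \bigl(c_1/|\Lambda_{2r_1}|\bigr)\cdot|\Lambda_{2r_n}|\to\infty$. The tension is intrinsic: the density of $A_n$ must tend to $1$ (it exhausts $\V$), so no single-scale path-counting bound applied to the union of all marks up to level $n$ can certify finite components; what is actually needed is a multi-scale argument controlling how the components of $A_{n-1}$ (whose density is close to $1$ for large $n$) can chain together the new large-scale shapes, and your sketch contains no such control. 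Substituting invariant F{\o}lner shapes for balls via Lemma~\ref{lem:invariant-Folner-sequence} does not remove this obstruction; it only changes $|\Lambda_{2r_n}|$ into a comparable shape-dependent degree.

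The paper sidesteps the multi-scale percolation estimate by changing the construction rather than the estimate: at each level the new sparse marks define finite Voronoi cells with respect to a F{\o}lner-based, diagonally invariant ``distance'' $\rho$, and $A_{n+1}$ is obtained from $A_n$ by adding only those parts of the new Voronoi cells that avoid the fattened union $D_{n+1}^{+1}$ of $A_n$-components touching Voronoi boundaries. This trimming makes finiteness of components hold by construction, but it destroys the naive Borel--Cantelli exhaustion you rely on; the paper then proves $A_n\uparrow\V$ by showing that, conditionally on $\zero\notin A_{n-1}$, the component of $\zero$ is swallowed at the next level with probability bounded below uniformly in $n$, and this is where amenability enters quantitatively, through the stabilizer-invariant F{\o}lner sets of Lemma~\ref{lem:invariant-Folner-sequence} and the mass-transport identity of Lemma~\ref{lem:size-of-inverted-set}. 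To salvage your version you would need either such a trimming mechanism or quantitative tail bounds on component sizes at every level to run a genuine multi-scale argument; as written, the proposal has a gap at its central step.
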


Let us mention that in the special case of $G=\Z^d$, several parts of the argument below can be skipped or simplified, thereby leading to a shorter proof. In the general case, certain technicalities arise which make the proof somewhat longer. The reader may therefore wish to have in mind the case of $G=\Z^d$ on a first reading.

Before giving the proof, let us explain the idea behind it; see Figure~\ref{fig:cell-process} for an illustration.
The main idea of the construction is to use the points of a low-density Bernoulli process to construct Voronoi cells (determined from the Bernoulli process in a finitary manner), which are then used as the cells of $A_1$ (after slightly decreasing the Voronoi cells to ensure that they are well-separated). Using another Bernoulli process of even lower density, we again construct Voronoi cells, which are then used to obtain $A_2$ from $A_1$ by ``filling in'' some of the empty space between the cells of $A_1$, taking care not to connect cells of $A_1$ which are not in the same Voronoi cell (thus ensuring that an infinite cluster is not created). Repeating in this manner, we obtain an increasing sequence $A_1 \subset A_2 \subset \cdots$ of sets (each having only finite cells) as a finitary factor of a small entropy \iid\ process. It will then only remain to show that $A_n$ increases to $\V$. This is where amenability comes into play.

\begin{figure}
	\centering
	\begin{subfigure}[t]{.31\textwidth}
		\centering
		\includegraphics[scale=0.3,trim={1.8cm 12cm 1.8cm 2cm},clip]{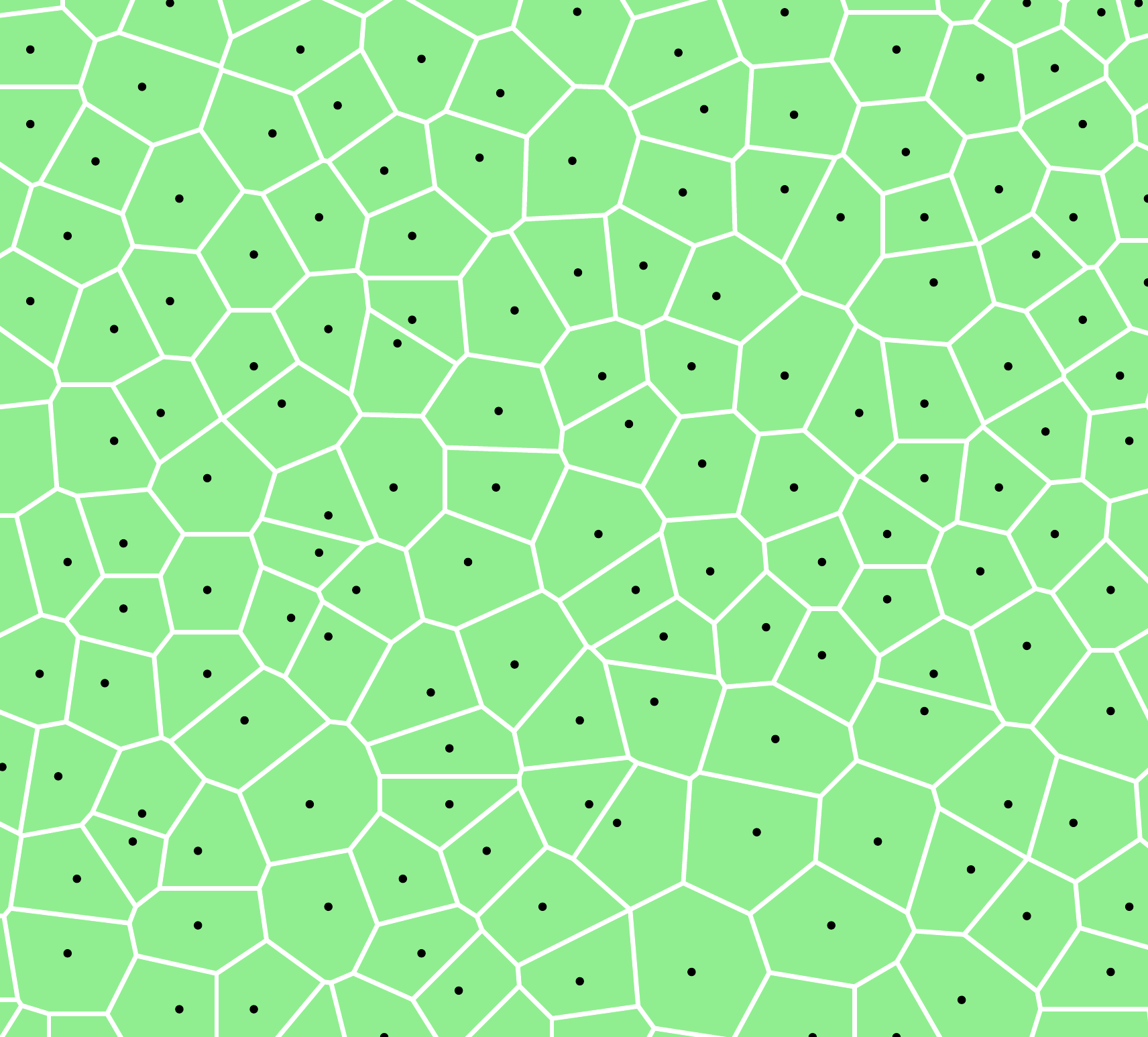}
		\caption{$A_1$}
		\label{fig:cell1}
	\end{subfigure}%
	\begin{subfigure}{15pt}
		\quad
	\end{subfigure}%
	\begin{subfigure}[t]{.31\textwidth}
		\centering
		\includegraphics[scale=0.3,trim={1.8cm 12cm 1.8cm 2cm},clip]{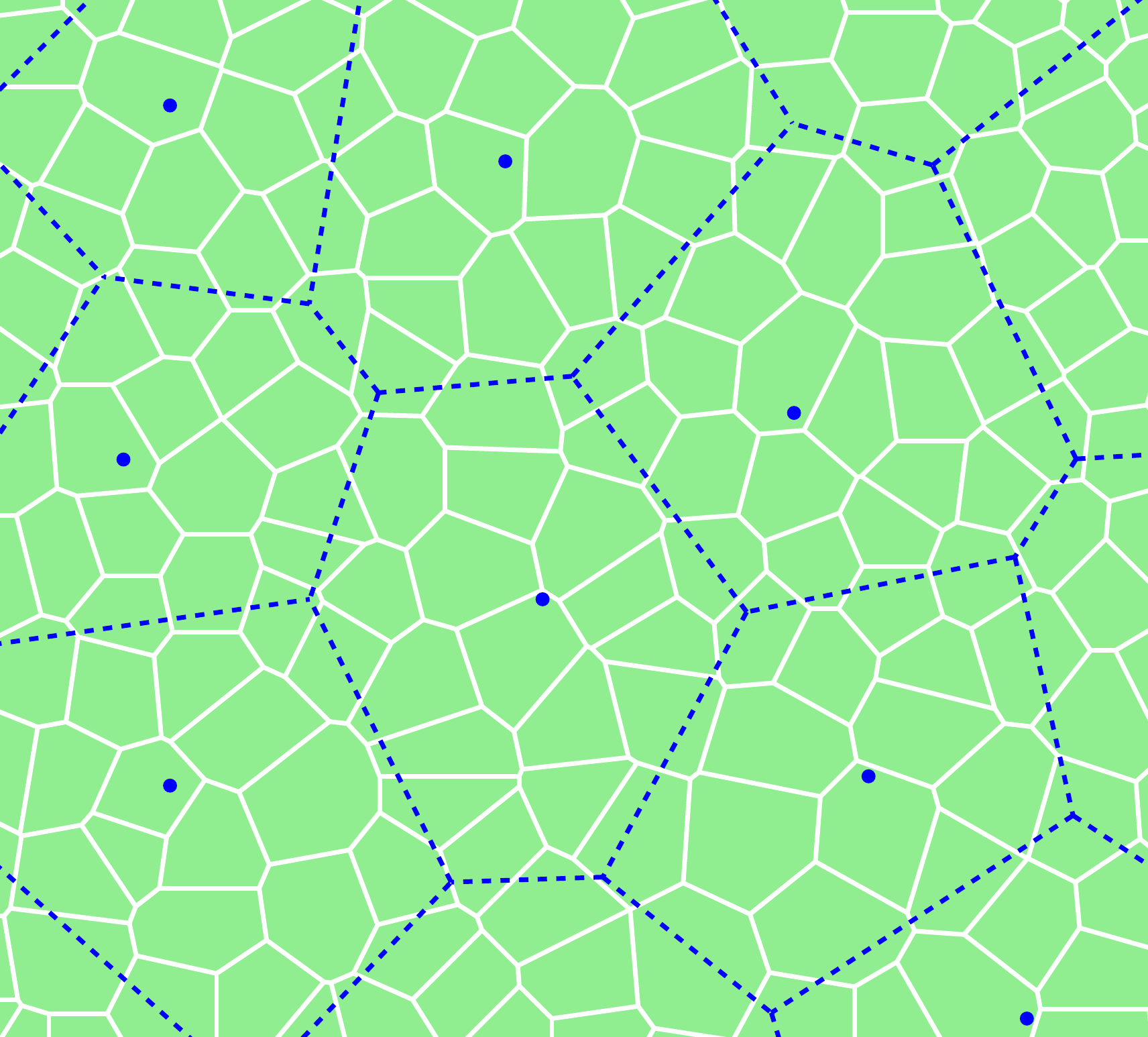}
		\caption{Going from $A_1$ to $A_2$}
		\label{fig:cell2}
	\end{subfigure}%
	\begin{subfigure}{15pt}
		\quad
	\end{subfigure}%
	\begin{subfigure}[t]{.31\textwidth}
		\centering
		\includegraphics[scale=0.3,trim={1.8cm 12cm 1.8cm 2cm},clip]{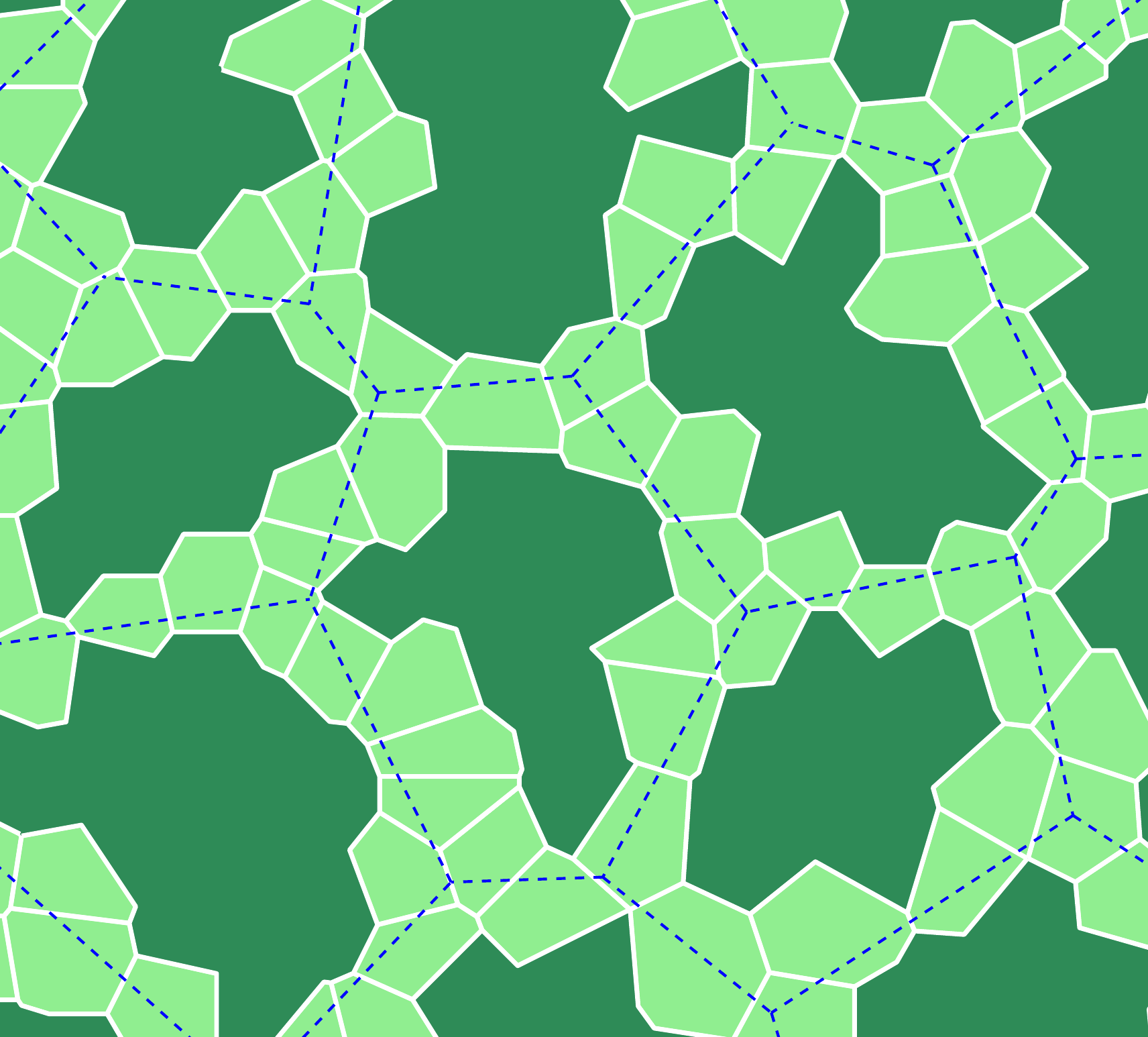}
		\caption{$A_2$}
		\label{fig:cell3}
	\end{subfigure}
	\caption{Constructing the cell process. The cells of $A_1$ are simply the Voronoi cells of a Bernoulli process. To get from $A_1$ to $A_2$, we consider the Voronoi cells of a lower-density Bernoulli process, and ``merge'' cells of $A_1$ which are entirely contained in any such Voronoi cell. Repeating this procedure produces the cell process. The green shade (light and dark) depicts regions belonging to the cell process. The dark green depicts cells of $A_2$ which are not cells of $A_1$.}
	\label{fig:cell-process}
\end{figure}

	To ensure that $A_n$ increases to $\V$, we must be careful in how we define the Voronoi cells. When the graph $G$ has a F{\o}lner sequence consisting of balls (as is the case when $G$ has subexponential growth), the Voronoi cells may be taken with respect to the graph distance in $G$. However, for the general case considered here, we must adapt the usual Voronoi cells to a certain ``metric'' (which is not necessarily a true metric) given by a suitable F{\o}lner sequence. Since we will need this metric to be diagonally $\Gamma$-invariant, we need to choose a F{\o}lner sequence $(F_n)_n$ having the property that each $F_n$ is invariant under the stabilizer of some fixed vertex $\zero \in \V$, i.e., $F_n=\Gamma_{\zero,\zero}F_n$ for all~$n$. It is a simple observation that any graph $G$ of subexponential growth has such a sequence for any $\Gamma$ (since there is a F{\o}lner sequence consisting of balls), and that the Cayley graph $G$ of a finitely generated amenable group $\Gamma$ also has such a sequence (since the stabilizers are trivial).
As it turns out, such a F{\o}lner sequence always exists in an amenable (not necessarily transitive) graph, even when $\Gamma$ is its full automorphism group.

The following lemma (which appears to be new) shows that every amenable graph admits a F{\o}lner sequence consisting of sets which are invariant under the stabilizer of some vertex. The lemma was obtained jointly with Omer Angel.

\begin{lemma}\label{lem:invariant-Folner-sequence}
Let $G$ be an amenable graph, let $\Gamma$ be the full automorphism group of $G$ and let $\zero \in \V$. There exists a F{\o}lner sequence $(F_n)_n$ in $G$ such that $F_n = \Gamma_{\zero,\zero} F_n$ for all $n$.
\end{lemma}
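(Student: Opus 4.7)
The plan is to symmetrize an arbitrary Følner set of $G$ over the action of the stabilizer $\Gamma_{\zero,\zero}$, and then select an appropriate superlevel set of the resulting average. The key preliminary observation is that every orbit $\Gamma_{\zero,\zero}v$ is finite: any $\gamma \in \Gamma_{\zero,\zero}$ satisfies $\dist(\zero,\gamma v)=\dist(\gamma\zero,\gamma v)=\dist(\zero,v)$, so $\Gamma_{\zero,\zero}v$ is contained in the sphere of radius $\dist(\zero,v)$ around $\zero$, which is finite by local finiteness of $G$. Consequently $\Gamma_{\zero,\zero}$, regarded as a closed subgroup of $\text{Aut}(G)$ under the topology of pointwise convergence, is compact; let $\mu$ denote its Haar probability measure.

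Let $(F_n)$ be any Følner sequence in $G$, which exists by amenability. For each $n$ define the $\Gamma_{\zero,\zero}$-invariant function
\[
  g_n(v) := \mu\bigl(\{\gamma \in \Gamma_{\zero,\zero} : \gamma v \in F_n\}\bigr) = \frac{|F_n \cap \Gamma_{\zero,\zero}v|}{|\Gamma_{\zero,\zero}v|},
\]
which is $[0,1]$-valued and supported inside $\bigcup_{u \in F_n}\Gamma_{\zero,\zero}u$, a finite union of finite orbits. For each $t \in [0,1)$, the superlevel set $\{g_n > t\}$ is a finite $\Gamma_{\zero,\zero}$-invariant subset of $\V$. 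A standard layer-cake computation gives
\[
  \int_0^1 |\{g_n > t\}|\,dt = \sum_{v \in \V} g_n(v) = |F_n|
  \qquad\text{and}\qquad
  \int_0^1 |\partial\{g_n > t\}|\,dt = \sum_{\{u,v\} \in E(G)} |g_n(u) - g_n(v)|.
\]

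The main estimate is the boundary inequality
\[
  \sum_{\{u,v\} \in E(G)} |g_n(u) - g_n(v)| \le |\partial F_n|.
\]
To prove it, write $g_n(v) = \int_{\Gamma_{\zero,\zero}} \1_{F_n}(\gamma v)\,d\mu(\gamma)$, apply Jensen's inequality to move the absolute value inside the integral, and then exchange the sum over edges with the integral. For each $\gamma \in \Gamma$, since $\gamma$ is an automorphism and hence a bijection on edges, $\sum_{\{u,v\} \in E(G)} |\1_{F_n}(\gamma u) - \1_{F_n}(\gamma v)| = |\partial(\gamma^{-1}F_n)| = |\partial F_n|$; integrating against $\mu$ gives the claim.

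Combining the two layer-cake identities with this bound, a routine averaging argument (if $\int_0^1 a(t)\,dt \le c \int_0^1 b(t)\,dt$ with $a,b \ge 0$ and $a(t)=0$ whenever $b(t)=0$, then $a(t) \le c\,b(t)$ for some $t$ with $b(t)>0$) supplies $t_n^* \in [0,1)$ with $|\{g_n > t_n^*\}|>0$ and
\[
  \frac{|\partial\{g_n > t_n^*\}|}{|\{g_n > t_n^*\}|} \le \frac{|\partial F_n|}{|F_n|}.
\]
Setting $F_n' := \{g_n > t_n^*\}$ then produces a Følner sequence of $\Gamma_{\zero,\zero}$-invariant sets. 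The only non-trivial step is the boundary inequality above; the essential input there is the $\Gamma$-invariance of $|\partial\cdot|$, which is immediate once one realizes that averaging the indicator of $F_n$ over a compact group of automorphisms can only decrease total variation along edges.
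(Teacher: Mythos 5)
Your proof is correct, and its skeleton is the same as the paper's: average the indicator of a Følner set over the $\Gamma_{\zero,\zero}$-orbits to get the orbit-density function $g_n$, and then use a first-moment argument to select a good superlevel set. Indeed, your layer-cake integral over $t\in[0,1)$ is literally the paper's probabilistic construction, where the paper takes $E=\bigcup_{i:U\le p_i}V_i$ for a uniform threshold $U$ and computes $\E|E|=|F_n|$, $\E|\partial E|$ exactly as in your two coarea identities. Where you genuinely diverge is in the proof of the key bound on the boundary term: you realize $g_n(v)=\int_{\Gamma_{\zero,\zero}}\1_{F_n}(\gamma v)\,d\mu(\gamma)$ via the Haar probability measure of the compact stabilizer, push the absolute value inside, and use that each automorphism preserves $|\partial\cdot|$, so that averaging over the group can only decrease the total variation along edges; the paper instead stays purely combinatorial, bounding each orbit-pair contribution $\max\{p_i-p_j,0\}\,|\partial(V_i,V_j)|$ by $|\partial(F\cap V_i,F^c\cap V_j)|$ using the biregularity of the bipartite graph between two orbits. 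Your route is shorter and conceptually transparent, but it leans on the standing assumptions (local finiteness and connectedness) to get finite orbits and hence compactness and closedness of $\Gamma_{\zero,\zero}$ in $\text{Aut}(G)$, and on the standard fact that Haar measure pushes forward to the uniform measure on each finite orbit; the paper's argument uses no topology or Haar measure at all and is stated for an arbitrary group of automorphisms with finite orbits, which makes it slightly more general and self-contained. Both correctly yield, for each $n$, a finite non-empty $\Gamma_{\zero,\zero}$-invariant set with isoperimetric ratio at most $|\partial F_n|/|F_n|$, which is exactly what the lemma needs.
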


Lemma~\ref{lem:invariant-Folner-sequence} easily follows by applying the following lemma to each set of some F{\o}lner sequence, taking $\Gamma_0$ to be the stabilizer $\Gamma_{\zero,\zero}$ of $\zero$.

\begin{lemma}
Let $G$ be any graph and let $\Gamma_0$ be a group of automorphisms of $G$ under which all orbits of $\V$ are finite. For any finite non-empty set $F \subset \V$, there exists a finite non-empty set $E \subset \V$ such that
\[ \frac{|\partial E|}{|E|} \le \frac{|\partial F|}{|F|} \qquad\text{and}\qquad E = \Gamma_0 E .\]
\end{lemma}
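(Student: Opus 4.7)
The plan is to apply a layer-cake / averaging argument to the vertex-multiplicity function arising from the orbit of $F$ under $\Gamma_0$. First I would consider the collection $\cF := \{\gamma F : \gamma \in \Gamma_0\}$ of $\Gamma_0$-translates of $F$. The hypothesis that $\Gamma_0$-orbits on $\V$ are finite implies that $\bigcup_{\gamma \in \Gamma_0} \gamma F = \bigcup_{v \in F} \Gamma_0 v$ is a finite union of finite sets, so $\Gamma_0 F$ is finite; consequently $\cF$, being a collection of subsets of a finite set, is itself finite. Each member $F' \in \cF$ is the image of $F$ under a graph automorphism, so $|F'|=|F|$ and $|\partial F'|=|\partial F|$.

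Next, define the multiplicity function $d \colon \V \to \Z_{\ge 0}$ by $d(v) := |\{F' \in \cF : v \in F'\}|$, and the super-level sets $E_t := \{v \in \V : d(v) \ge t\}$ for $t \ge 1$. Because $\cF$ is $\Gamma_0$-invariant as a collection of sets, $d$ is $\Gamma_0$-invariant, and hence so is each $E_t$; moreover $E_t \subseteq \Gamma_0 F$ is finite, and $E_t = \emptyset$ for $t > |\cF|$. The idea is then to use two layer-cake identities:
\[ \sum_{t \ge 1} |E_t| \;=\; \sum_{v \in \V} d(v) \;=\; \sum_{F' \in \cF} |F'| \;=\; |\cF|\cdot|F|, \]
and
\[ \sum_{t \ge 1} |\partial E_t| \;=\; \sum_{\{u,v\} \in E(G)} |d(u)-d(v)| \;\le\; \sum_{F' \in \cF} |\partial F'| \;=\; |\cF|\cdot|\partial F|. \]
The first equality on the second line holds because an edge $\{u,v\}$ lies in $\partial E_t$ for exactly those integers $t$ strictly between $\min(d(u),d(v))$ and $\max(d(u),d(v))$, contributing $|d(u)-d(v)|$ to the sum. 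The inequality is a triangle-inequality observation: for each $F' \in \cF$, the signed quantity $\mathbf{1}[u \in F'] - \mathbf{1}[v \in F']$ is nonzero iff $\{u,v\} \in \partial F'$.

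Combining the two displays yields $\sum_t |\partial E_t| \le (|\partial F|/|F|) \sum_t |E_t|$, and since $\sum_t |E_t| = |\cF|\cdot|F|>0$ with both sums finite, a standard mediant/pigeonhole argument produces some $t$ with $|E_t|>0$ and $|\partial E_t|/|E_t| \le |\partial F|/|F|$; setting $E := E_t$ gives the required set. The only delicate point is recognizing that the hypothesis of finite $\Gamma_0$-orbits is precisely what makes $\cF$ a finite collection, so that all sums above are genuinely finite and the averaging step is meaningful; once this is in hand, the rest is straightforward bookkeeping.
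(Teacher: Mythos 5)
Your proof is correct, and it takes a genuinely different route from the paper's. The paper argues probabilistically: it partitions $\V$ into the finite $\Gamma_0$-orbits $V_i$, includes each orbit in a random set $E$ whenever a single uniform variable $U$ falls below the orbit density $p_i = |F \cap V_i|/|V_i|$, shows $\E|E| = |F|$ and $\E|\partial E| \le |\partial F|$, and concludes that some realization works; the boundary estimate there rests on the observation that the bipartite graph between two orbits is biregular, which is where the group action enters quantitatively. You instead work deterministically with the multiplicity function $d$ of the finitely many distinct translates $\gamma F$ (counting distinct sets, which is what keeps everything finite), and combine the layer-cake identity with the co-area identity $\sum_{t\ge 1} |\partial E_t| = \sum_{\{u,v\} \in E(G)} |d(u)-d(v)|$ and a triangle inequality over translates; the mediant/pigeonhole step then selects a good level set. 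In substance the candidate sets coincide in the two proofs: since $d$ is $\Gamma_0$-invariant it is constant on each orbit, indeed $d \equiv |\cF|\,p_i$ on $V_i$, so your super-level sets $E_t$ are exactly the unions of orbits $\bigcup_{i : p_i \ge t/|\cF|} V_i$ over which the paper's random set ranges, and your sum over levels $t$ is the discrete analogue of the paper's expectation over $U$. What your argument buys is that the triangle-inequality bound $|d(u)-d(v)| \le \#\{F' \in \cF : \{u,v\} \in \partial F'\}$ replaces the paper's biregularity computation entirely, making the proof purely combinatorial and somewhat shorter; the paper's probabilistic phrasing, on the other hand, makes the "average boundary versus average volume" heuristic very transparent. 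One cosmetic slip: an edge $\{u,v\}$ lies in $\partial E_t$ for the integers $t$ with $\min(d(u),d(v)) < t \le \max(d(u),d(v))$, not for those strictly between the two values; the count $|d(u)-d(v)|$ you actually use is nevertheless the correct one, so nothing in the argument is affected.
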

\begin{proof}
We show the existence of the desired $E$ via a probabilistic method -- that is, we choose a random subset $E$ of $\V$ and show that it satisfies the desired properties with positive probability.
Let $\{V_i\}_i$ be the orbits of $\V$ under the action of $\Gamma_0$.
Thus, $\{V_i\}_i$ is a partition of $\V$ such that each $V_i$ is finite (by assumption) and satisfies $V_i = \Gamma_0 V_i$.

Let $U$ be a uniform random variable in $[0,1]$ and define
\[ E := \bigcup_{i : U \le p_i} V_i , \qquad\text{where } p_i := \frac{|F \cap V_i|}{|V_i|} .\]
Thus, each orbit $V_i$ is included in $E$ with probability $p_i$, where the choices for different $i$ are positively correlated through the use of the common variable $U$. Then
\[ \E|E| = \sum_i \Pr(U \le p_i) \cdot |V_i| = \sum_i p_i |V_i| = \sum_i |F \cap V_i| = |F| ,\]
and
\[ \E|\partial E| = \sum_{i,j} \Pr(p_j < U \le p_i) \cdot |\partial(V_i,V_j)| = \sum_{i,j} \max\{p_i-p_j,0\} \cdot |\partial(V_i,V_j)| ,\]
where $\partial(U,V)$ denotes the set of edges between two disjoint sets $U$ and $V$. Let us show that each term in the second sum is at most $|\partial(F \cap V_i, F^c \cap V_j)|$, i.e.,
\[ p_i-p_j \le \frac{|\partial(F \cap V_i, F^c \cap V_j)|}{|\partial(V_i,V_j)|} \qquad\text{whenever }p_i>p_j\text{ and }\partial(V_i,V_j) \neq \emptyset .\]
To see this, note that the right-hand side is the probability that an edge $e=\{u,v\}$ that is uniformly chosen from $\partial(V_i,V_j)$ belongs to $\partial(F \cap V_i, F^c \cap V_j)$, or equivalently, with the convention that $u \in V_i$ and $v \in V_j$, that $u \in F$ and $v \notin F$. Since this probability is at least $\Pr(u \in F)-\Pr(v \in F)$, it suffices to show that $u$ and $v$ are uniformly distributed in $V_i$ and $V_j$, respectively. This follows from the observation that the bipartite graph $(V_i \cup V_j, \partial(V_i,V_j))$ is biregular -- each vertex in $V_i$ is adjacent to the same number of vertices in $V_j$, and similarly, each vertex in $V_j$ is adjacent to the same number of vertices in $V_i$. Indeed, for any $u,v \in V_i$ and $\gamma \in \Gamma_0$ such that $\gamma u = v$, the mapping $w \mapsto \gamma w$ defines a bijection between $N(u) \cap V_j$ and $N(v) \cap V_j$.

We thus conclude that
\[ \E|\partial E| \le \sum_{i,j} |\partial(F \cap V_i, F^c \cap V_j)| = |\partial F| = \alpha \cdot \E|E| ,\]
where $\alpha := |\partial F|/|F|$. Thus, $\alpha |E|-|\partial E|$ is a random variable with non-negative expectation. Since $\alpha |E|-|\partial E|$ is zero when $E$ is empty, conditioned on $E \neq \emptyset$, we still have that $\alpha |E|-|\partial E|$ has non-negative expectation. In particular, there is positive probability that $E\neq \emptyset$ and $\alpha |E| \ge |\partial E|$. Finally, since $E$ is finite and satisfies $E=\Gamma_0 E$ almost surely, we see that $E$ satisfies the desired properties with positive probability.
\end{proof}

Suppose that $(F_n)_n$ is a F{\o}lner sequence guaranteed by Lemma~\ref{lem:invariant-Folner-sequence}, i.e., $F_n = \Gamma_{\zero,\zero} F_n$ for all $n$, and further suppose that $F_1 \subsetneq F_2 \subsetneq \cdots$ and $F_1 \cup F_2 \cup \dots = \V$ (there is clearly no loss in generality in doing so).
Recall the definition of $\Gamma_{u,v}$ from Section~\ref{sec:mass-transport}.
For $u,v \in \V$, define
	\[ \rho(u,v) := \min \big\{ n \ge 1 : v \in \Gamma_{\zero,u} F_n \big\} .\]
	Using that $\Gamma_{\zero,\gamma u} = \gamma \Gamma_{\zero,u}$ for any $\gamma \in \Gamma$, one easily checks that $\rho$ is diagonally $\Gamma$-invariant, i.e., $\rho(\gamma u, \gamma v)=\rho(u,v)$ for all $\gamma \in \Gamma$. We stress that $\rho$ is not necessarily symmetric in that $\rho(u,v)$ may not equal $\rho(v,u)$.
In particular, we do not claim that $\rho$ is a metric. Nevertheless, we still think of $\rho(u,v)$ as a measure of distance from $v$ to $u$. One nice property of $\rho$ that is easily verifiable and which will be important is that, for any sequence of pairs of vertices $(u_i,v_i)_{i=1}^\infty$, we have
\begin{equation}\label{eq:rho-convergence}
\rho(u_i,v_i) \to \infty\text{ as }i\to\infty \qquad\text{if and only if}\qquad \dist(u_i,v_i) \to \infty\text{ as }i\to\infty .
\end{equation}

The fact that $\rho$ may not be symmetric presents a certain challenge in the proof, for which we require the following lemma to address.
We note that $\rho$ is indeed symmetric when the F{\o}lner sequence $(F_n)_n$ consists of balls, and that it is nearly symmetric when $G$ is a Cayley graph of $\Gamma$ in which case switching the roles of $u$ and $v$ in $\rho(u,v)$ has the same effect as replacing each $F_n$ with $F_n^{-1}$. Accordingly, in these cases, it is immediate that the two ``$\rho$-balls'' of radius $n$ around~$\zero$, $\{ v : \rho(\zero,v) \le n\}$ and $\{ u : \rho(u,\zero) \le n \}$, have the same size, namely $|F_n|$.
The following lemma shows that this is in fact always true in our setting.

\begin{lemma}\label{lem:size-of-inverted-set}
Let $G$ be a transitive amenable graph and let $\Gamma$ be the full automorphism group of $G$.
Let $F \subset \V$ be invariant under the stabilizer of some vertex $\zero$, i.e., $\Gamma_{\zero,\zero}F=F$. Then
\[ |\{ u \in \V : \zero \in \Gamma_{\zero,u} F \}| = |F| .\]
\end{lemma}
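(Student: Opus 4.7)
The plan is to apply the mass-transport principle to the indicator of the relation $v \in \Gamma_{\zero,u}F$. Define $f \colon \V^2 \to [0,\infty]$ by
\[ f(u,v) := \mathbf{1}[v \in \Gamma_{\zero,u} F] .\]
First I would verify that $f$ is diagonally $\Gamma$-invariant. The key algebraic fact is that $\Gamma_{\zero,\gamma u} = \gamma \Gamma_{\zero,u}$ for any $\gamma \in \Gamma$ and $u \in \V$ (if $\alpha\zero = u$ then $(\gamma\alpha)\zero = \gamma u$, and conversely if $\beta\zero = \gamma u$ then $\gamma^{-1}\beta \in \Gamma_{\zero,u}$). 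Consequently $\Gamma_{\zero,\gamma u}F = \gamma(\Gamma_{\zero,u}F)$, so $\gamma v \in \Gamma_{\zero,\gamma u}F$ if and only if $v \in \Gamma_{\zero,u}F$, giving $f(\gamma u,\gamma v) = f(u,v)$.

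Next I would compute both sides of the mass-transport identity~\eqref{eq:mass-transport}. Summing along columns,
\[ \sum_v f(\zero,v) = |\{v \in \V : v \in \Gamma_{\zero,\zero}F\}| = |\Gamma_{\zero,\zero}F| = |F|, \]
where the last equality uses the hypothesis $\Gamma_{\zero,\zero}F = F$. Summing along rows,
\[ \sum_u f(u,\zero) = |\{u \in \V : \zero \in \Gamma_{\zero,u}F\}|, \]
which is precisely the quantity we wish to evaluate. Amenability of $G$ guarantees that the full automorphism group is unimodular, so the mass-transport principle applies and equates the two sums, yielding the claim.

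There is essentially no obstacle beyond checking the diagonal invariance carefully; the lemma is a short consequence of the mass-transport principle once the right transport function is written down. The only subtlety is remembering that unimodularity (hence the applicability of~\eqref{eq:mass-transport}) is exactly where the amenability hypothesis is used, and that the assumption $\Gamma_{\zero,\zero}F = F$ is what makes the column sum equal to $|F|$ rather than to the possibly larger $|\Gamma_{\zero,\zero}F|$.
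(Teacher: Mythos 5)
Your proof is correct and follows exactly the paper's argument: the same transport function $f(u,v)=\1_{\{v \in \Gamma_{\zero,u}F\}}$, the same verification of diagonal invariance via $\Gamma_{\zero,\gamma u}=\gamma\Gamma_{\zero,u}$, and the same application of the mass-transport principle (with amenability supplying unimodularity). Nothing is missing.
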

\begin{proof}
Define $f \colon \V^2 \to [0,1]$ by
\[ f(u,v) := \1_{\{v \in \Gamma_{\zero,u} F\}} .\]
Since $\Gamma_{\zero,\gamma u} = \gamma \Gamma_{\zero,u}$ for any $\gamma \in \Gamma$, it follows that $f$ is diagonally $\Gamma$-invariant.
Thus, by the mass-transport principle~\eqref{eq:mass-transport},
\[ |\{ u \in \V : \zero \in \Gamma_{\zero,u} F \}| = \sum_u f(u,\zero) = \sum_v f(\zero,v) = |\Gamma_{\zero,\zero} F| = |F| . \qedhere \]
\end{proof}

We are now ready to give the proof of Proposition~\ref{prop:cell-process}.

\begin{proof}[Proof of Proposition~\ref{prop:cell-process}]
	Let $(\epsilon_n)$ be a sequence to be chosen later which satisfies that $0 \le \epsilon_n \le \epsilon 2^{-n}$.
	We shall construct a cell process $A$ as a finitary factor of the \iid\ process $Y=(Y_v)_{v \in \V}$ in which $Y_v=(Y_{v,n})_{n \ge 1}$ are independent random variables with $Y_{v,n} \sim \text{Ber}(\epsilon_n)$.	
	The entropy of $Y$ can be made arbitrary small, since
	\[ h(Y) = H(Y_v) = \sum_{n=1}^\infty H(Y_{v,n}) = \sum_{n=1}^\infty \left( \epsilon_n \log \tfrac{1}{\epsilon_n} + (1-\epsilon_n) \log \tfrac{1}{1-\epsilon_n} \right) \le 10\epsilon \log \tfrac{1}{\epsilon} .\]
	
	As explained, the idea of the construction is to use the points in
	\[ U_n := \{ v \in \V : Y_{v,n}=1\} ,\]
	for any given~$n$, to construct Voronoi cells, which are then used to define the cells of $A_n$.	
	Precisely, we define the Voronoi cells of a non-empty set $U \subset \V$ by
	\[ \bar{\cC}_U(u) := \Big\{ v \in \V : \rho(u,v) < \rho(u',v)\text{ for all }u' \in U \setminus \{u\} \Big\} ,\qquad u \in U .\]
	Thus, the Voronoi cell $\bar{\cC}_U(u)$ associated to $u$ consists of all vertices $v \in \V$ which are closer (in the distance measured by $\rho$) to $u$ than to any other $u' \in U$. In particular, Voronoi cells associated to different vertices in $U$ are disjoint, but they are not necessarily separated (they could be adjacent to one another). We therefore define modified Voronoi cells by slightly shrinking the sets $\bar{\cC}_U(u)$. Precisely, we define
	\[ \cC_U(u) := (\bar{\cC}_U(u))^{-1} .\]
	Using that the Voronoi cells are disjoint, it is straightforward to check that $\dist(\cC_U(u),\cC_U(u')) > 1$ for distinct $u,u' \in U$. We note that $\cC_U(u)$ (in fact, already $\bar{\cC}_U(u)$) may be empty and need not be connected.

	Before proceeding with the construction of the cell process, let us first show that the Voronoi cells of $U$ are almost surely finite whenever $U$ is the set of points of an \iid\ Bernoulli process. To this end, it suffices to show that the probability that $\bar{\cC}_U(\zero)$ intersects $F_n \setminus F_{n-1}$ is summable over~$n$. Indeed, since a fixed vertex $v \in F_n \setminus F_{n-1}$ belongs to $\bar{\cC}_U(\zero)$ only if $U \setminus \{\zero\}$ contains no element of $\{ u : \rho(u,v) \le n \}$, it follows from Lemma~\ref{lem:size-of-inverted-set} that the probability of this is at most $p^{|F_n|-1}$, where $p$ is the density of the Bernoulli process. Since $|F_n| \ge n$, we see that $|F_n| \cdot p^{|F_n|-1}$ is summable, and hence that the $\bar{\cC}_U(\zero)$ is almost surely finite.
	
	We now turn to the construction of the cell process $A$.
	The first level set in the cell process is simply taken to be the vertices in a modified Voronoi cell of $U_1$, i.e.,
	\[ A_1 := \bigcup_{u \in U_1} \cC_{U_1}(u) .\]
	Since the Voronio cells of $U_1$ are almost surely finite, we see that $\cC_{U_1}(u)$ is almost surely finite for all $u \in U_1$. Since $\dist(\cC_{U_1}(u),\cC_{U_1}(u')) > 1$ for distinct $u,u' \in U_1$, it follows that all connected components of $A_1$ are almost surely finite.
	
	Suppose now that, for some $n \ge 1$, $A_n$ has been defined in such a way that all connected components of $A_n$ are almost surely finite, and let us now define $A_{n+1}$.
	Let $A'_{n+1}$ be the union of the modified Voronoi cells of $U_{n+1}$, i.e.,
	\[ A'_{n+1} := \bigcup_{u \in U_{n+1}} \cC_{U_{n+1}}(u) ,\]
	and recall that (as for $A_1$) all connected components of $A'_{n+1}$ are almost surely finite. Intuitively, we would like to obtain $A_{n+1}$ from $A_n$ by adding $A'_{n+1}$. However, this might create infinite clusters, and we must take care to avoid this by instead only adding a suitable subset of $A'_{n+1}$. It will suffice to slightly increase the ``forbidden region'' $(A'_{n+1})^c$ as follows: let $D_{n+1}$ denote the union of the connected components of $A_n$ that intersect $(A'_{n+1})^c$, and add $D_{n+1}^{+1}$ to the forbidden region. Precisely, we define
	\[ A_{n+1} := A_n \cup (A'_{n+1} \setminus D_{n+1}^{+1}) .\]
	It is straightforward to check that all connected components of $A_{n+1}$ are almost surely finite. Assuming that $A_1 \cup A_2 \cup \dots = \V$ almost surely, it is also easy to check using~\eqref{eq:rho-convergence} that $A$ is a finitary $\text{Aut}(G)$-factor of $Y$, which would complete the proof of the proposition.
	
	It remains only to show that $A_1 \cup A_2 \cup \dots = \V$ almost surely.
	By $\text{Aut}(G)$-invariance, this is equivalent to the fact that $\Pr(\zero \in A_n) \to 1$ as $n \to \infty$.
	For $n \ge 1$, let $B_n$ denote the connected component of $\zero$ in $A_n \cup \{\zero\}$. For $n \ge 2$, define the event
	\[ E_n := \Big\{ B_{n-1} \subset \cC_{U_n}(u)\text{ for some }u \in U_n \Big\} .\]
	Note that
	\[ \{ \zero \in A_n \setminus A_{n-1} \} = E_n \cap \{ \zero \notin A_{n-1} \} .\]
	Thus, it suffices to show that
	\[ \Pr(E_n \mid \zero \notin A_{n-1}) \ge c \qquad\text{for some }c>0\text{ and all }n \ge 2 .\]
	As we now show, this holds when $\epsilon_n$ is suitably chosen.
	Since $B_{n-1}$ is almost surely finite, there exists a sufficiently large $r_n$ so that
	\[ \Pr(B_{n-1} \subset \Lambda_{r_n-1} \mid \zero \notin A_{n-1}) \ge \tfrac12 ,\]
	where $\Lambda_r := \Lambda_r(\zero)$.
	Since $(F_s)_s$ is a F{\o}lner sequence, there exists $s_n$ sufficiently large so that
	\begin{equation}\label{eq:choice-of-s_n}
	|F_{s_n}| \ge \frac{2^n}{\epsilon} \qquad\text{and}\qquad \frac{|\partial F_{s_n}|}{|F_{s_n}|} \le \frac{1}{2|\Lambda_{r_n}|} .
	\end{equation}
	Set $\epsilon_n := |F_{s_n}|^{-1}$.
	Then, noting that $A_{n-1}$ (and thus also $B_{n-1}$) is independent of $U_n$,
	\begin{align*}
	 \Pr(E_n \mid \zero \notin A_{n-1})
	  &\ge \Pr\big(B_{n-1} \subset \Lambda_{r_n-1}\text{ and }\Lambda_{r_n-1} \subset \cC_{U_n}(u)\text{ for some }u \in U_n \mid \zero \notin A_{n-1}\big) \\
	  &= \Pr\big(B_{n-1} \subset \Lambda_{r_n-1} \mid \zero \notin A_{n-1}\big) \cdot \Pr\big(\Lambda_{r_n-1} \subset \cC_{U_n}(u)\text{ for some }u \in U_n\big) .
	\end{align*}
	Since the first term on the right-hand side is at least $\frac12$ by the choice of $r_n$, it remains to show that, for some constant $c>0$ which does not depend on $n$, we have
	\[ \Pr\big(\Lambda_{r_n-1} \subset \cC_{U_n}(u)\text{ for some }u \in U_n\big) \ge c .\]
	
	Let us first see how to show this when $F_{s_n}$ is a ball, say $\Lambda_\ell$. In this case, it is not hard to see that the event in question occurs when $U_n$ has a unique point in $\Lambda_{\ell-r_n}$ and no other point in $\Lambda_{\ell+r_n}$, so that
\begin{align*}
	&\Pr\big(\Lambda_{r_n-1} \subset \cC_{U_n}(u)\text{ for some }u \in U_n\big)\\
		  &\qquad \qquad \ge \Pr\big(|U_n \cap \Lambda_{\ell-r_n}|=|U_n \cap \Lambda_{\ell+r_n}|=1\big) \\
	  &\qquad \qquad = \Pr\big(\text{Ber}(|\Lambda_{\ell-r_n}|,\epsilon_n)=1\big) \cdot \Pr\big(\text{Ber}(|\Lambda_{\ell+r_n}\setminus \Lambda_{\ell-r_n}|,\epsilon_n)=0\big) \ge c .
	\end{align*}
		
	We now handle the general case in more detail. Set $U:=U_n$.	
	Our goal is to bound from below the probability that $\Lambda_{r_n-1} \subset \cC_U(u)$ for some $u \in U$.
	To this end, we first find a simple condition that implies the occurrence of this event.
	For a set $F \subset \V$, denote
	\[ M(F) := \{ u \in \V : \zero \in \Gamma_{\zero,u} F \} .\]
	Set $r:=r_n$ and $s := s_n$.
	Let us show that
	\begin{equation}\label{eq:M-cond}
	|U \cap M(F_s^{-r})|=|U \cap M(F_s^{+r})|=1 \quad\implies\quad \Lambda_{r-1} \subset \cC_U(u)\text{ for some }u \in U .
	\end{equation}	
	Suppose that the left-hand side holds. Let us show that $\Lambda_{r-1} \subset \cC_U(u)$, where $u$ is the unique element in $U \cap M(F_s^{-r})$. By the definition of $\cC_U(u)$, this is equivalent to $\Lambda_r \subset \bar{\cC}_U(u)$. Recalling the definition of $\bar{\cC}_U(u)$, we see that we must show that $\rho(u,w) < \rho(u',w)$ for all $u' \in U \setminus \{u\}$ and $w \in \Lambda_r$.
	Let $u' \in U \setminus \{u\}$ and $w \in \Lambda_r$.
	It suffices to show that $\rho(u,w) \le s$ and $\rho(u'
	,w)>s$.

	Towards showing this, we first note that $(\gamma V)^{+1}=\gamma (V^{+1})$ and $(\gamma V)^{-1}=\gamma (V^{-1})$ for any $\gamma \in \Gamma$ and $V \subset \V$, due to the fact that $\gamma$ acts by an automorphism of $G$. In particular, $(\Gamma_{\zero,u} V)^{+r} = \Gamma_{\zero,u} (V^{+r})$ and $(\Gamma_{\zero,u} V)^{-r} = \Gamma_{\zero,u} (V^{-r})$, and we may drop the parenthesis when writing such terms.
	
	Let us now show that $\rho(u,w) \le s$.
	Since $u \in M(F_s^{-r})$, we have that $\zero \in \Gamma_{\zero,u} F_s^{-r}$, or equivalently, $\Lambda_r \subset \Gamma_{\zero,u} F_s$. Since $w \in \Lambda_r$, it follows that $\rho(u,w) \le s$.
	Next, we show that $\rho(u',w)>s$.
	Note that $u' \notin M(F_s^{+r})$ since $u \in M(F_s^{-r}) \subset M(F_s^{+r})$. Thus, $\zero \notin \Gamma_{\zero,u'} F_s^{+r}$, or equivalently, $\Lambda_r \cap \Gamma_{\zero, u'} F_s = \emptyset$. Thus, $w \notin \Gamma_{\zero,u'} F_s$ and, using that $F_1,F_2,\dots,F_{s-1} \subset F_s$, it follows that $\rho(u',w)>s$.

	Using~\eqref{eq:M-cond}, we obtain
\begin{align*}
	&\Pr\big(\Lambda_{r_n-1} \subset \cC_{U_n}(u)\text{ for some }u \in U_n\big)\\
		  &\qquad \qquad \ge \Pr\big(|U_n \cap M(F_{s_n}^{-r_n})|=|U_n \cap M(F_{s_n}^{+r_n})|=1\big) \\
	  &\qquad \qquad = \Pr\big(\text{Ber}(|M(F_{s_n}^{-r_n})|,\epsilon_n)=1\big) \cdot \Pr\big(\text{Ber}(|M(F_{s_n}^{+r_n})|-|M(F_{s_n}^{-r_n})|,\epsilon_n)=0\big) .
	\end{align*}
	By Lemma~\ref{lem:size-of-inverted-set}, we have that
	\[ |M(F_{s_n}^{-r_n})| = |F_{s_n}^{-r_n}| \qquad\text{and}\qquad |M(F_{s_n}^{+r_n})| = |F_{s_n}^{+r_n}| .\]
	Finally, by~\eqref{eq:choice-of-s_n},
	\[ |F_{s_n}^{-r_n}| \ge |F_{s_n}| - |\partial F_{s_n}| \cdot |\Lambda_{r_n}| \ge \tfrac12 |F_{s_n}| \qquad\text{and}\qquad |F_{s_n}^{+r_n}| \le |F_{s_n}| + |\partial F_{s_n}| \cdot |\Lambda_{r_n}| \le \tfrac32 |F_{s_n}| ,\]
	so that, by standard estimates for Bernoulli random variables, both probabilities in question are bounded below by a positive constant.
\end{proof}

The above proposition established the existence of a finitary cell process $A$. In particular, $A_n$ is an invariant set which has high density when $n$ is large. The following proposition shows that the clusters of a dense invariant set typically have relatively small boundary.

\begin{lemma}\label{lem:small-boundary}
Let $G$ be a transitive graph of degree $d$ and let $\Gamma$ be a transitive unimodular group of automorphisms of $G$.
Let $B \subset \V$ be a random set with no infinite clusters and whose distribution is $\Gamma$-invariant. Let $\cC_v$ denote the cluster of $v$ in $B$. Then, for any $\delta>0$,
\[ \Pr\big( |\partial \cC_\zero| \ge \delta |\cC_\zero| \big) \le (\tfrac d\delta + 1) \cdot \Pr(\zero \notin B) .\]
\end{lemma}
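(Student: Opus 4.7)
The plan is to argue via two applications of the mass-transport principle (Section~\ref{sec:mass-transport}). Under the natural convention that $\cC_v = \emptyset$ whenever $v \notin B$, the event $\{|\partial \cC_\zero| \ge \delta |\cC_\zero|\}$ automatically contains $\{\zero \notin B\}$ (both sides are then zero), so it suffices to prove
\[ \Pr\big(\zero \in B,\ |\partial \cC_\zero| \ge \delta|\cC_\zero|\big) \;\le\; \tfrac{d}{\delta}\, \Pr(\zero \notin B). \]
Call a finite cluster $\cC$ of $B$ \emph{bad} if $|\partial \cC| \ge \delta |\cC|$, write $D := \V \setminus B$, and for $v \in \V$ let $b(v) := |\{u \sim v : u \in D\}|$ denote the number of neighbors of $v$ in $D$.

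The key idea is to convert the global ratio $|\partial \cC|/|\cC|$ into a pointwise quantity by averaging $b$ over the cluster. Consider the function
\[ g(u,v) := \1[u \in B,\ \cC_u\text{ bad},\ v \in \cC_u] \cdot \frac{b(v)}{|\cC_u|}, \]
which is diagonally $\Gamma$-invariant in expectation since $B$ is $\Gamma$-invariant. Summing first over $v$ gives $\sum_v g(u,v) = \1[u \in B,\ \cC_u\text{ bad}]\cdot |\partial \cC_u|/|\cC_u| \ge \delta\, \1[u \in B,\ \cC_u \text{ bad}]$, using that every boundary edge of $\cC_u$ has its outer endpoint in $D$ and hence is counted by $\sum_{v\in\cC_u} b(v)$. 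Summing first over $u$ gives $\sum_u g(u,v) = \1[v \in B,\ \cC_v\text{ bad}]\cdot b(v)$ (the cluster $\cC_v$ is uniquely determined from $v$). Since $\Gamma$ is unimodular and clusters of $B$ are almost surely finite, applying~\eqref{eq:mass-transport} to $\E g$ yields
\[ \delta\, \Pr\big(\zero \in B,\ \cC_\zero\text{ bad}\big) \;\le\; \E\big[\1[\zero \in B,\ \cC_\zero\text{ bad}]\cdot b(\zero)\big]. \]

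It remains to bound the right-hand side. A second mass-transport suffices: take
\[ h(u,v) := \1[u \in B,\ \cC_u \text{ bad},\ u \sim v,\ v \in D], \]
so that $\sum_v h(u,v) = \1[u \in B,\ \cC_u \text{ bad}]\cdot b(u)$ while $\sum_u h(u,v) \le d\cdot \1[v \in D]$, since $v$ has at most $d$ neighbors in total. The mass-transport principle then gives $\E[\1[\zero \in B,\ \cC_\zero\text{ bad}]\cdot b(\zero)] \le d\, \Pr(\zero \notin B)$, which combined with the previous inequality establishes the claim after adding the contribution $\Pr(\zero \notin B)$ from the trivial part. The only conceptual point is the choice of the weight $b(v)/|\cC_u|$ in $g$: this is exactly what turns the defining bound $|\partial \cC| \ge \delta|\cC|$ for a bad cluster into a pointwise average accessible to mass-transport, and is the main idea of the argument.
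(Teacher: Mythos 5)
Your proof is correct and follows essentially the same route as the paper: a mass-transport argument showing that the expected boundary-to-volume contribution of the cluster at the origin is at most $d\cdot\Pr(\zero\notin B)$. The only cosmetic difference is that you split the transport into two steps and restrict to ``bad'' clusters from the outset, whereas the paper uses a single transport $\psi(u,v)=\1_{\{u\notin B,\,v\in B\}}|N(u)\cap\cC_v|/|\cC_v|$ and finishes with Markov's inequality (and your second transport could even be replaced by the trivial bound $\E[b(\zero)]\le d\,\Pr(\zero\notin B)$ from transitivity).
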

\begin{proof}	
The proof uses the mass-transport principle.
Define $\psi \colon \V^2 \to [0,1]$ by
\[ \psi(u,v) := \begin{cases}
 \frac{|N(u) \cap \cC_v|}{|\cC_v|} &\text{if }u \notin B,~v \in B\\
 0 &\text{otherwise}
\end{cases} .\]
Note that, almost surely,
\[ \sum_u \psi(u,\zero) = \frac{\1_{\zero \in B}}{|\cC_\zero|} \cdot \sum_{u \notin B} |N(u) \cap \cC_\zero| = \frac{|\partial \cC_\zero|}{|\cC_\zero|} \cdot \1_{\zero \in B} \]
and
\[ \sum_v \psi(\zero,v) = \1_{\zero \notin B} \cdot \sum_{v \in B} \frac{|N(\zero) \cap \cC_v|}{|\cC_v|} = |N(\zero) \cap B| \cdot \1_{\zero \notin B} \le d \cdot \1_{\zero \notin B} .\]
The $\Gamma$-invariance of $B$ implies that $f(u,v) := \E \psi(u,v)$ is diagonally $\Gamma$-invariant. Thus, the mass-transport principle~\eqref{eq:mass-transport} yields that
\[ \E\left[ \tfrac{|\partial \cC_\zero|}{|\cC_\zero|} \cdot \1_{\zero \in B} \right] \le d \cdot \Pr(\zero \notin B) .\]
The proposition now follows from Markov's inequality.
\end{proof}

\begin{remark}
A result of H{\"a}ggstr{\"o}m~\cite[Theorem~1.6]{haggstrom1997infinite} states that any automorphism-invariant edge percolation on a $d$-regular tree ($d \ge 3$) with edge-density at least $2/d$ has an infinite cluster with positive probability. In particular, automorphism-invariant cell processes do not exist on such a tree. Moreover, by Lemma~\ref{lem:small-boundary} (see also the closely related~\cite[Theorem~1.2]{benjamini1999group}), we see that $\Gamma$-invariant cell processes do not exist on any transitive unimodular non-amenable graph.
In fact, it is shown in~\cite[Theorem~5.1]{benjamini1999group} that a closed subgroup $\Gamma$ of $\text{Aut}(G)$ is amenable if and only if there is a $\Gamma$-invariant site percolation on $G$ with with no infinite clusters and density arbitrarily close to 1. It follows that a $\Gamma$-invariant cell process on $G$ exists if and only if $\Gamma$ is amenable. The site percolation constructed in~\cite{benjamini1999group} is a factor of an \iid\ process, though it is not finitary.
\end{remark}

\section{Random total orders}
\label{sec:random-orders}

In this section, we construct a random total order on $\V$ that has the following properties:
\begin{itemize}
 \item It is a finitary factor of an \iid\ process with arbitrarily small entropy.
 \item It is supported on total orders having the same order type as $\Z$.
 \item The successor/predecessor of any vertex can be found in a finitary manner.
\end{itemize}

Let us explain these properties.
A random total order on $\V$, or more generally, a random binary relation on $\V$, may be regarded as a random element in $\{0,1\}^{\V^2}$. With this viewpoint, the notion of finitary factor easily applies to such relations. Namely, such a relation is a $\Gamma$-factor of $Y$ if it has the same distribution as $\varphi(Y)$ for some measurable function $\varphi \colon T^\V \to \{0,1\}^{\V^2}$ satisfying that $\varphi(y)_{(u,v)}=\varphi(\gamma y)_{(\gamma u,\gamma v)}$ for all $\gamma \in \Gamma$, $u,v \in \V$ and $y \in T^\V$. Such a factor is finitary if for every $u,v \in \V$ there almost surely exists a finite (random) set $W \subset \V$ such that $\varphi(Y)_{(u,v)}$ is determined by $(Y_w)_{w \in W}$, in the sense that $\varphi(y)_{(u,v)}=\varphi(Y)_{(u,v)}$ for any $y \in T^\V$ which coincides with $Y$ on $W$.

A total order $\le$ on $\V$ has the same order type as $\Z$ if there is an order preserving bijection between the two ordered spaces, i.e., a bijection $f \colon \V \to \Z$ such that $f(u) \le f(v)$ if and only if $u \le v$.
This may be equivalently formulated as saying that $\le$ has no minimum or maximum and that there are finitely many elements between any two elements, i.e., every interval of the form $\{ w \in \V : u \le w \le v \}$ is finite. In particular, in such an order, every vertex $v$ has a successor (an element $w \ge v$ such that $u \ge w$ for all $u \ge v$) and a predecessor (an element $w \le v$ such that $u \le w$ for all $u \le v$).

Given a factor from $Y$ to a random total order $\le$ on $\V$, we say that successors (predecessors) can be found in a finitary manner if for every $u,v \in \V$ there almost surely exists a finite (random) set $W \subset \V$ such that the event that $u$ is the $\le$-successor ($\le$-predecessor) of $v$ is determined by $(Y_w)_{w \in W}$.
We note that, in general, there is no direct relation to the notion of finitary factor: it may be that such a factor is finitary though successors/predecessors cannot be found in a finitary manner, or it may that successors/predecessors can be found in a finitary manner though the factor is not finitary.
On the other hand, for a total order having the order type of $\Z$ almost surely, the second implication is easily seen to hold -- if successors/predecessors can be found in a finitary manner, then the factor is necessarily finitary.

A total order which is a finitary factor of an \iid\ process with infinite entropy is easily obtained from the order induced by uniform random variables in $[0,1]$ assigned to each vertex. It is easy to see that this order almost surely has the same order type as $\mathbb Q$. A total order (also with the order type of $\mathbb Q$) which is a finitary factor (with exponential tails on the coding radius) of an \iid\ process with finite entropy was constructed in~\cite{harel2018finitary} for any quasi-transitive graph satisfying a geometric condition similar to~\eqref{eq:sphere-condition}.
The application in~\cite{harel2018finitary} did not require the \iid\ process to have arbitrarily small entropy and so this was not stated there, though it easily follows from the proof there that this is possible. Since the proof is short, we give it here. The following is essentially a reformulation of~\cite[Lemma~18]{harel2018finitary} for our situation.

\begin{lemma}\label{lem:finiteordering}
Let $G$ be a transitive non-empty graph satisfying~\eqref{eq:sphere-condition}. For any $0<\epsilon \le \frac12$ there exists a total order on $\V$ which is a finitary $\text{Aut}(G)$-factor of an \iid\ Bernoulli process with density~$\epsilon$.
\end{lemma}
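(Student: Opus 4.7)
The plan is to let $Y = (Y_v)_{v \in \V}$ be an \iid\ Bernoulli$(\epsilon)$ process, and to associate to each vertex $v \in \V$ and each radius $r \geq 0$ a \emph{type} $\tau_r(v)$, defined as the isomorphism class of the rooted, $\{0,1\}$-decorated ball $(\Lambda_r(v), v, Y|_{\Lambda_r(v)})$, where isomorphism means a rooted graph isomorphism that also preserves the $Y$-labels. Since the set $\mathcal{T}_r$ of possible values of $\tau_r(v)$ is countable, I fix, for each $r$, an arbitrary total order on $\mathcal{T}_r$. For distinct $u,v \in \V$, I let $R(u,v) := \min\{r \ge 0 : \tau_r(u) \ne \tau_r(v)\}$ and declare $u < v$ iff $\tau_{R(u,v)}(u)$ precedes $\tau_{R(u,v)}(v)$ in the chosen order on $\mathcal{T}_{R(u,v)}$. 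Equivalently, ordering vertices lexicographically by the full type sequence $\tau(v) = (\tau_r(v))_r$ yields a total order whenever the $\tau(v)$'s are pairwise distinct. This definition is automatically $\text{Aut}(G)$-equivariant, because $\tau_r$ is an isomorphism invariant, and it is finitary, because once $R(u,v) < \infty$ the comparison of $u$ and $v$ depends only on $Y$ restricted to the finite set $\Lambda_{R(u,v)}(u) \cup \Lambda_{R(u,v)}(v)$.

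The crux is to show $R(u,v) < \infty$ almost surely for every distinct pair $u,v \in \V$. A standard diagonal-subsequence argument using local finiteness of $G$ shows that the event $\{R(u,v) = \infty\}$ is precisely the event that some global automorphism $\phi \in \Gamma_{u,v} := \{\gamma \in \text{Aut}(G) : \gamma u = v\}$ satisfies $Y \circ \phi = Y$. This is where the sphere condition~\eqref{eq:sphere-condition} enters essentially: any such $\phi$ must move infinitely many vertices. Indeed, if the set $F$ of non-fixed points of $\phi$ were finite, then choosing $w_1 \in F$ and $w_2 := \phi(w_1)$, the balls $\Lambda_r(w_1)$ and $\Lambda_r(w_2)$ would coincide for every $r$ large enough to contain $F$, because both contain $F$ and $\phi$ being distance-preserving and the identity on $F^c$ forces them to agree outside $F$ as well, contradicting~\eqref{eq:sphere-condition}. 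For any fixed $\phi$ moving infinitely many vertices, $\Pr(Y \circ \phi = Y) = 0$: either some $\phi$-orbit is infinite (so the \iid\ Bernoulli cannot a.s.\ be constant on it) or there are infinitely many orbits of size $\ge 2$ (each contributing a factor at most $1 - 2\epsilon(1-\epsilon) < 1$ to the product expressing the probability).

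The main obstacle I expect is to promote this pointwise vanishing to the statement $\Pr(\exists\,\phi \in \Gamma_{u,v} : Y \circ \phi = Y) = 0$ when $\Gamma_{u,v}$ is uncountable (as happens, for example, when $G$ is a regular tree); for countable $\text{Aut}(G)$, such as $G = \Z^d$, this is immediate from a union bound. To handle the general case I would follow the strategy of~\cite[Lemma~18]{harel2018finitary}, exploiting the fact that $\Gamma_{u,v}$ is a compact coset of the vertex stabilizer $\Gamma_{u,u}$ inside the locally compact Polish group $\text{Aut}(G)$, together with the closed-subgroup structure of $\{\phi \in \Gamma_{u,v} : Y \circ \phi = Y\}$, to show that this set is almost surely empty. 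Once this is in place, $R(u,v)$ is almost surely finite, the order defined above is a.s.\ a total order, and it is a finitary $\text{Aut}(G)$-factor of the \iid\ Bernoulli$(\epsilon)$ process. The conclusion holds for any $\epsilon \in (0,\tfrac12]$, and observe that $H(Y_\zero) = h(\epsilon) \to 0$ as $\epsilon \downarrow 0$, so the entropy of the driving process can be made arbitrarily small exactly as the lemma allows.
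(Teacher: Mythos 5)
Your reduction of the problem to ``almost surely no label-preserving automorphism maps $u$ to $v$'' is fine (the diagonal argument works because a rooted isomorphism of induced balls preserves distance to the root, and transitivity makes spheres around $u$ and $v$ equinumerous, so the pointwise limit is a genuine automorphism), your use of~\eqref{eq:sphere-condition} to show such a $\phi$ must move infinitely many vertices is correct, and the fixed-$\phi$ estimate $\Pr(Y\circ\phi=Y)=0$ is correct. But the step you yourself flag as the main obstacle is exactly where the proof is missing, and what you offer in its place is not an argument. Saying that $\Gamma_{u,v}$ is a compact coset and that $\{\phi\in\Gamma_{u,v}:Y\circ\phi=Y\}$ is closed only gives, via Fubini against Haar measure, that this set is a.s.\ Haar-null; a closed Haar-null set can perfectly well be nonempty. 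Compactness does give $\Pr(\exists\phi)=\lim_r\Pr(\exists\,\phi\in\Gamma_{u,v}\text{ with }Y\circ\phi=Y\text{ on }\Lambda_r(u))$, but to push this limit to zero you need a quantitative estimate, and the naive union bound over restrictions $\phi|_{\Lambda_r(u)}$ fails in general: on a regular tree (which satisfies~\eqref{eq:sphere-condition} and is covered by the lemma) the number of candidate partial maps grows much faster than the per-map matching probability decays. Moreover, \cite{harel2018finitary}*{Lemma~18}, which you cite as the source of the missing step, does not prove the statement by any such compactness/measure argument; it proves it by the very estimate you would need to supply, so the citation is circular for your route. For graphs with countable automorphism group (e.g.\ $\Z^d$) your proof is complete, but not in the stated generality.

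The paper sidesteps the uncountability issue entirely by comparing a coarser invariant than your decorated-ball type: for each $v$ it records the sequence $Z_{v,n}=\sum_{u:\dist(u,v)=n}\eta_u$ of sphere sums of the Bernoulli field, and orders vertices lexicographically by these sequences. Equality of your full types implies $Z_u=Z_v$, so it suffices to show $\Pr(Z_u=Z_v)=0$, and this is done by a direct one-step conditional estimate: by~\eqref{eq:sphere-condition} and transitivity there is a vertex $w_n\in\Lambda_n(u)\setminus(\Lambda_{n-1}(u)\cup\Lambda_n(v))$, and conditioning on all other bits, the bit at $w_n$ leaves $\Pr(Z_{u,n}=Z_{v,n}\mid E_{n-1})\le 1-\epsilon$, whence $\Pr(Z_u=Z_v)=0$. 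If you want to salvage your finer type-based order, you would need an analogous quantitative estimate showing that the probability of a partial label-preserving isomorphism of $r$-balls taking $u$ to $v$ tends to zero; as it stands, that is the content of the lemma and it is not proved in your proposal.
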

\begin{proof}
	Let $\eta=(\eta_v)_{v \in \V}$ be an \iid\ Bernoulli process with density~$\epsilon$.
	For any $v \in \V$, define $Z_v=(Z_{v,n})_{n \ge 0} \in \{0,1,\dots\}^{\{0,1,\dots\}}$ by
\[
Z_{v,n} := \sum_{u \in \V : \dist(u,v)=n} \eta_u.
\]
Define a relation $\le$ on $\V$ in which $u \le v$ if and only if $Z_u \preceq Z_v$, where $\preceq$ denotes the lexicographical order on $\{0,1,\dots\}^{\{0,1,\dots\}}$. Then $\le$ is clearly a $\text{Aut}(G)$-factor of $\eta$.

It remains to show that $\le$ is almost surely a total order on $\V$ and that the factor is finitary.
Since $\le$ is clearly a preorder, to show that it is a total order, it suffices to show that $\Pr(Z_u=Z_v)=0$ for distinct $u,v \in \V$.
It then follows from the definition of the lexicographical order that the factor is finitary.

	Fix $u,v \in \V$ distinct and consider the event
	\[ E_n := \bigcap_{i=1}^n \{ Z_{u,i} = Z_{v,i} \} .\]
	Since $\Pr(E_n) \to \Pr(Z_u=Z_v)$ as $n \to \infty$, it suffices to show that $\Pr(E_n \mid E_{n-1}) \le 1-\epsilon$ for all $n \ge 1$.
	By~\eqref{eq:sphere-condition} and the assumption that the graph is non-empty, we have $\Lambda_n(u) \setminus \Lambda_{n-1}(u) \not\subset \Lambda_n(v)$, as otherwise $\Lambda_{3n}(u) \subset \Lambda_{3n}(v)$, which in turn implies that $\Lambda_{3n}(u) = \Lambda_{3n}(v)$ by transitivity.
	Thus, there exists some $w_n \in \Lambda_n(u) \setminus (\Lambda_{n-1}(u) \cup \Lambda_n(v))$. Then
	\[ \Pr\big(E_n \mid \eta_{\V \setminus \{w_n\}}\big) \le \max_{k \in \Z} \Pr(\eta_{w_n}=k) = \max\{\epsilon,1-\epsilon\} = 1-\epsilon .\]
	Since $E_{n-1}$ is measurable with respect to $\eta_{\V \setminus \{w_n\}}$, it follows that $\Pr(E_n \mid E_{n-1}) \le 1-\epsilon$.
\end{proof}

Using Lemma~\ref{lem:finiteordering} and the cell processes constructed in the previous section, we are able to construct a total order satisfying all three properties described above.

\begin{lemma}\label{lem:total-order}
Let $G$ be a transitive amenable non-empty graph satisfying~\eqref{eq:sphere-condition} and let $\epsilon>0$. Then there exists an \iid\ process $Y$ with entropy at most $\epsilon$, and a random total order $\le$ on $\V$ which almost surely has the same order type as $\Z$, such that $\le$ is a finitary $\text{Aut}(G)$-factor of $Y$ for which successors/predecessors can be found in a finitary manner.
\end{lemma}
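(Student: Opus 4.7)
My plan is to combine the cell process of Proposition~\ref{prop:cell-process} with the random order of Lemma~\ref{lem:finiteordering} and then use the cell-process structure to inductively build a random order of type $\mathbb{Z}$. First apply Proposition~\ref{prop:cell-process} to obtain a cell process $A=(A_n)_{n\ge 1}$ as a finitary $\text{Aut}(G)$-factor of an \iid\ process $Y^{(1)}$ of entropy at most $\epsilon/2$, and apply Lemma~\ref{lem:finiteordering} to obtain a random total order $\preceq$ on $\V$ as a finitary $\text{Aut}(G)$-factor of an \iid\ Bernoulli process $Y^{(2)}$ whose density is small enough that $h(Y^{(2)})\le\epsilon/2$. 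Setting $Y:=(Y^{(1)},Y^{(2)})$ then gives the required \iid\ input of entropy at most $\epsilon$.

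The next step is to inductively define, on each connected component $C$ of $A_n$, a total order $\le_C$ in a way that is compatible across levels. A component $C$ of $A_n$ is finite and decomposes as a disjoint union of blocks: the components of $A_{n-1}$ contained in $C$, each already carrying its inductive order, together with the singletons $\{v\}$ for $v\in C\setminus A_{n-1}$. Arrange these blocks linearly by comparing their $\preceq$-minima, then concatenate in this arrangement using each block's internal order. Restriction of $\le_C$ to any sub-component recovers the previously defined order on that sub-component. The global order $\le$ on $\V$ is then defined by declaring $u\le v$ if and only if $u\le_C v$ in the first common component $C$ of some $A_n$ that contains both $u$ and $v$.

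The construction is manifestly $\text{Aut}(G)$-equivariant and finitary, since comparing two vertices depends only on finding their smallest common component and running the inductive construction on it. A preliminary point to verify is that any two fixed vertices almost surely lie in a common component of some $A_n$; this should follow from the Voronoi-based construction in Proposition~\ref{prop:cell-process}, for which the Voronoi cells grow unboundedly as $n\to\infty$ and therefore eventually contain any prescribed pair of vertices. Once order type $\mathbb{Z}$ is established, finitariness of successors and predecessors will follow from the block structure: since blocks are only concatenated and never split at higher levels, if $v$ and its successor $w$ share a block at some level, then they do so at every higher level, so no element can ever be inserted between them, and hence the successor is determined as soon as $v$ first acquires one.

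The main obstacle is showing that $\le$ almost surely has order type $\mathbb{Z}$. Finiteness of each interval is immediate from the finite-component property, so the substantive content is that every vertex $v$ has infinitely many $\le$-predecessors and infinitely many $\le$-successors, or equivalently, that the block containing $v$ is not eventually placed at an extreme position within the component containing it. My plan is to show (a) that the component of $v$ in $A_n$ strictly grows for infinitely many $n$ almost surely, which follows from the cell process construction, and (b) that conditional on the block structure just before a growth step, there is a probability bounded away from zero uniformly in $n$ that $\preceq$ places at least one newly added block before and at least one after the block containing $v$. Step (b) relies on a symmetry property of $\preceq$ inherited from the \iid\ Bernoulli data underlying Lemma~\ref{lem:finiteordering}. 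The subtlety is that the same $\preceq$ is reused across all levels, so a direct Borel--Cantelli argument does not apply; one must argue that the $\preceq$-comparisons relevant to growth steps at well-separated levels depend on essentially disjoint portions of the underlying data, and then invoke a suitable adaptation.
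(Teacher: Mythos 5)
Your construction is essentially the paper's: order within cells inductively, comparing blocks via their $\preceq$-minima and concatenating, with the same input processes (Proposition~\ref{prop:cell-process} plus Lemma~\ref{lem:finiteordering}). The one structural difference — you interleave the new singletons of $C\setminus A_{n-1}$ among the old blocks, whereas the paper places all of $C\cap A_{n-1}$ before $C\setminus A_{n-1}$ — is harmless: in both versions a cell of $A_n$ is treated as an atomic block at all higher levels, so consecutive pairs stay consecutive, every $\le$-interval equals a $\le_n$-interval inside a finite cell, and successors/predecessors are determined in a finitary way. (Also, the preliminary fact that any two vertices eventually share a cell needs nothing about Voronoi cells: $G$ is connected, so a finite path joining the two vertices is eventually contained in $A_n$ and hence lies in a single component.)

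The genuine gap is exactly the step you flag as the main obstacle: ruling out a vertex with only finitely many $\le$-predecessors (or successors). Your plan — a lower bound, uniform in $n$, on the probability that a growth step places new blocks on both sides of the block of $v$, followed by a Borel--Cantelli-type argument — is both unnecessary and unlikely to go through as stated. The relevant event at a growth step is that some newly added block has $\preceq$-minimum smaller than the $\preceq$-minimum of the block containing $v$; that minimum is a running minimum over a growing set, so the chance of beating it is not bounded away from zero uniformly in $n$, and the comparisons at different levels all involve this same data, so the hoped-for ``essentially disjoint portions of the underlying data'' are not available (the order of Lemma~\ref{lem:finiteordering} also has long-range dependencies through the sphere sums). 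The paper avoids all of this with a soft argument: once all intervals are known to be finite, a vertex with finitely many predecessors would force the existence of a $\le$-minimum (the minimal element of the finite set of its predecessors together with itself); but $\le$ is an $\text{Aut}(G)$-equivariant factor of an \iid\ process, so by transitivity the events that $u$ is the $\le$-minimum, for $u\in\V$, are pairwise disjoint and equiprobable, hence each has probability $0$ since $\V$ is infinite, and the same holds for maxima. Replacing your step (b) and the Borel--Cantelli scheme by this invariance argument closes the gap; the rest of your proposal then matches the paper's proof.
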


\begin{proof}
By Lemma~\ref{lem:finiteordering}, there exists a total order $\preceq$ on $\V$ that is a finitary factor of an \iid\ process~$Y$ having entropy at most $\epsilon$. By Proposition~\ref{prop:cell-process}, there exist a cell process $A$ that is a finitary factor of an \iid\ process $Y'$ (which we take to be independent of $Y$) having entropy at most $\epsilon$. We construct the required total order $\le$ on $\V$ as a finitary factor of $(Y,Y')$.

The idea is to use $\preceq$ to order the sites within the cells given by $A$. More precisely, we will define an increasing sequence of partial orders $\le_1 \,\subset\, \le_2 \,\subset\, \dots$ such that each $\le_n$ induces a total order on every cell of $A_n$. Since $\bigcup_n A_n = \V$, this will produce a total order $\le$ given by the union $\bigcup_n \le_n$, which we will show has the desired properties. 

Precisely, we define $\le_1$ to be the relation in which $u \le_1 v$ whenever $u$ and $v$ belong to the same cell of $A_1$ and satisfy that $u \preceq v$. Then $\le_1$ is clearly a partial order that induces a total order on any cell of $A_1$. In fact, it is the union of these total orders on the cells of $A_1$ (that is, it only compares vertices that belong to the same cell).

Next, suppose we have defined the partial order $\le_{n-1}$ so that it is a union of total orders on the cells on $A_{n-1}$, and let us define $\le_n$. Consider a cell $C$ of $A_n$ and let $D:=C \cap A_{n-1} = D_1 \cup \dots \cup D_k$ be the union of the cells $D_1,\dots,D_n$ of $A_{n-1}$ that are contained in $C$. We define $\le_n$ in such a way that $D \le_n C \setminus D$ by requiring that $u \le_n v$ whenever $u \in D$ and $v \in C \setminus D$. To obtain a total order on~$C$, it remains to order the vertices in $D$ and the vertices in $C \setminus D$. The latter is ordered by defining $u \le_n v$ whenever $u,v \in C \setminus D$ and $u \preceq v$. The former is ordered by giving an order to the cells $D_1,\dots,D_k$ and using the $\le_{n-1}$ order within each cell -- that is, we require that $\le_n$ coincides with $\le_{n-1}$ on each cell $D_i$, and that either $D_i \le_n D_j$ or $D_j \le_n D_i$ for any two cells $D_i$ and $D_j$. Finally, the order of the cells is determined by requiring that $D_i \le_n D_j$ whenever $\min_{\preceq} D_i \preceq \min_{\preceq} D_j$. That is, the $i$-th cell precedes the $j$-cell in $\le_n$ if and only if the $\preceq$-minimal element in the $i$-th cell is $\preceq$-smaller than the $\preceq$-minimal element in the $j$-th cell. It is straightforward that $\le_n$ extends $\le_{n-1}$ and that $\le_n$ is a union of total orders on the cells of $A_n$.

We have thus obtained partial orders $\le_1,\le_2,\dots$ such that, for each $n$, $\le_n$ extends $\le_{n-1}$ and is a union of total orders on the cells of $A_n$. To show that $\le$ has the order type of $\Z$, it remains to show that, almost surely, every $\le$-interval is finite and there is no $\le$-minimum and no $\le$-maximum. 
It follows from the construction that if $u$ is the $\le_n$-successor of~$v$, then it is also its $\le_{n+1}$-successor. Thus, to conclude that every $\le$-interval is finite, it suffices to show that, for every $u,v \in \V$ having $u \le v$, there exists $n$ such that $u \le_n v$ and the interval $[u,v]_{\le_n}$ is finite. Indeed, since $\le_n$ only compares vertices within the same cell of $A_n$ and since all such cells are finite, all $\le_n$-intervals are finite.
Finally, no minimum or maximum can exist as this would contradict the invariance of~$\le$.

We have thus established that $\le$ almost surely has the same order type as $\Z$. It is straightforward from the fact that the $\le_n$-successor of a vertex $u$ (if it exists) is also the $\le_{n+1}$-successor of $u$, that the constructed factor from $(Y,Y')$ to $\le$ has the property that successors/predecessors can be found in a finitary manner. As mentioned in the beginning of the section, this implies that the factor is also finitary.
\end{proof}

\section{The finitary coding}
\label{sec:coding}

In this section, we construct a finitary coding for finitely dependent processes. We present the details of the proof of Theorem~\ref{thm:main}. Theorem~\ref{thm:main-no-entropy} may be proved in a similar manner (see Remark~\ref{rem:other-thm-proof} in Section~\ref{sec:conclusion}).

Let $X$ be a $\Gamma$-invariant finitely dependent process taking values in a finite set $S$.
Recall from the proof outline in Section~\ref{sec:outline} that we shall construct a finitary factor from an \iid\ process $Y=(Y^{\text{bits}},Y^{\text{cell}},Y^{\text{ord}})$ to $X$.
Recall also that $A$ will be a cell process that is a finitary factor of $Y^{\text{cell}}$ and that $\le$ will be a random total order on $\V$ that is a finitary factor of $Y^{\text{ord}}$, has the order type of $\Z$, and for which successors/predecessors can be found in a finitary manner. Given the cell process $A$ and the total order $\le$, we will use the additional randomness in $Y^{\text{bits}}$ to construct a realization of $X$.

\subsection{Choosing the parameters}
Fix $\epsilon>0$.
We shall choose the \iid\ processes $Y^{\text{bits}},Y^{\text{cell}},Y^{\text{ord}}$ to satisfy
$h(Y^{\text{bits}})<h(X)+5\epsilon$, $h(Y^{\text{cell}}) \le \epsilon$ and $h(Y^{\text{ord}}) \le \epsilon$ so that $Y$ has entropy
\[ h(Y) < h(X)+7\epsilon .\]

We let $Y^{\text{bits}}$ be any \iid\ process in which $Y^{\text{bits}}_v$ is a random number of random bits satisfying
\begin{equation}\label{eq:entropy-choice}
H(Y^{\text{bits}}_v)<h(X)+5\epsilon \qquad\text{and}\qquad \E|Y^{\text{bits}}_v| > h(X) + 3\epsilon .
\end{equation}
Recall that, by a random number of random bits, we mean a random variable $W$ taking values in $\{0,1\}^*$, the set of finite words over $\{0,1\}$, and having the property that, conditioned on the length $|W|$ of the word, $W$ is uniformly distributed on $\{0,1\}^{|W|}$. 
The desired random word can be obtained by taking $W$ to be the empty word with probability $p$ or a uniformly chosen sequence in $\{0,1\}^m$ with probability $1-p$, for some suitably chosen $m \ge 1$ and $0 \le p < 1$. Indeed, in this case, $H(|W|)=-p\log p - (1-p)\log (1-p)$ and $\E|W|=pm$ so that $H(|W|) \to 0$ and $\E|W| \to h(X)+4\epsilon$ as $m \to \infty$ when $p=\frac1m(h(X) + 4\epsilon)$. Since the entropy and length of $W$ are related via $H(W)=\E|W|+H(|W|)$, we see that~\eqref{eq:entropy-choice} holds when $m$ is sufficiently large.

Let $\delta>0$ be as in~\eqref{eq:entropy-via-boundary}. By decreasing $\delta$, we may additionally assume that
\begin{equation}\label{eq:delta}
2 \delta < \epsilon .
\end{equation}
Recall from Section~\ref{sec:outline} that we may assume without loss of generality that $X$ is 1-dependent.
By Proposition~\ref{prop:cell-process}, there exists a cell process $A$ and an \iid\ process $Y^{\text{cell}}$ of entropy at most $\epsilon$ such that $A$ is a finitary factor of $Y^{\text{cell}}$.
Since $\Pr(\zero \in A_n) \to 1$ as $n \to \infty$, Lemma~\ref{lem:small-boundary} implies that $\Pr(|\partial A_n(\zero)| \ge \delta |A_n(\zero)|) \to 0$ as $n \to \infty$, where $A_n(\zero)$ is the cell of $\zero$ in $A_n$. Thus, by replacing $(A_1,A_2,\dots)$ with $(A_m,A_{m+1},\dots)$ for some large $m$, we may assume that $\Pr(|\partial A_1(\zero)| \ge \delta |A_1(\zero)|)$ is arbitrary small. Specifically, we require that
\begin{equation}\label{eq:small-cluster-boundary}
\Pr\Big( |\partial A_1(\zero)| \ge \delta |A_1(\zero)| \Big) < \frac{\epsilon}{\log |S| +2} .
\end{equation}

Let $Y^{\text{ord}}$ be an \iid\ process with entropy at most $\epsilon$ and let $\le$ be a total order on $\V$ as guaranteed by Lemma~\ref{lem:total-order} (note that if the graph $G$ contains no edges, then $X$ is already an \iid\ process so that there is nothing to prove).

\subsection{The construction of the finitary coding}

Recall that the $n$-th $\le$-successor of $v$ is denoted by $v+n$ and its $n$-th $\le$-predecessor by $v-n$. In particular, $v \pm n$ are random elements of $\V$ which are determined from $Y^{\text{ord}}$ is a finitary manner. Recall also that the cell process $A$ is a finitary factor of $Y^{\text{cell}}$. It may be helpful from this point onward to think of $A$ and $\le$ as given, and that our goal is to use them, together with the random bits of $Y^{\text{bits}}$, to construct a realization of $X$.

We shall define, for every time $t \ge 0$ and every vertex $u \in \V$, a random variable
\[ L^t_u \in \{0,1\} .\]
We shall define these inductively with the $t=0$ variables given by
\begin{equation}\label{eq:L^0}
L^0_u := \1_{\{u\text{ is a level 1 agent and }|Y^{\text{bits}}_u|>0\}} .
\end{equation}
We think of $L^t_u$ as indicating whether $u$ read a bit at time $t$. In particular, if $L^t_u=1$ for some~$t$ and~$u$, then necessarily $u$ is an agent (of some level).
Since at time 0 an agent looks for an available bit at its own location, \eqref{eq:L^0} says that any level 1 agent reads a bit at time 0 if such a bit is available.

Before giving the main definitions of the construction, we first set up some auxiliary notation and definitions.
As we have already mentioned, our construction has the property that if an agent~$u$ reads a bit at some time $t$, then the bit it read is located at $u+t$, i.e., it is one of the bits of the word $Y^{\text{bits}}_{u+t}$.
In particular, the \emph{total number of bits read from location $v$ by time $t$} is
\[ M^t_v := L^0_v + L^1_{v-1} + \cdots + L^t_{v-t} .\]
The bits at any location $v$ are read sequentially -- the first agent to read a bit at $v$ will read $Y^{\text{bits}}_v(1)$, the second will read $Y^{\text{bits}}_v(2)$ and so on. Precisely,
the \emph{bit read by $u$ at time $t$} is
\[ \hat{W}^t_u := \begin{cases}
 Y^{\text{bits}}_{u+t}(M^t_{u+t}) &\text{if }L^t_u=1\\
 \emptyset &\text{otherwise}
\end{cases}.\]
For this to be well-defined, we must make sure that $u$ does not try to read a non-existent bit -- we mention already here that this does not occur, i.e., our definitions will ensure that
\[ M^t_v \le |Y^{\text{bits}}_v| \qquad\text{for all }v \in \V\text{ and all }t \ge 0 .\]
The \emph{word read by $u$ by time $t$} is then
\[ W^t_u := \hat{W}^0_u \circ \hat{W}^1_u \circ  \dots \circ \hat{W}^t_u ,\]
where $\circ$ denotes concatenation.
That is, $W^t_u$ is the word obtained by concatenating the bits read by $u$ until time $t$ in the order they were read. In particular, $W^t_u$ is a word in $\{0,1\}^*$ of length $|W^t_u|= L^0_u+L^1_u+\dots+L^t_u$.
We emphasize that $(M^t_u,W^t_u)_{u \in \V}$ is well-defined once $(L^i_u,N^i_u)_{u \in \V, 0 \le i \le t}$ is defined, as the former are functions of the latter and of $Y^{\text{bits}}$.

As explained in the proof outline, we use ``simulations'' to obtain samples of distributions from random bits. We first equip ourselves with simulations of all the possible distributions we may require throughout the construction of the finitary coding. The basic distributions we need are those of $X_V$ for a finite set $V \subset \V$. As we aim to obtain a $\Gamma$-equivariant factor, we must take care when dealing with random elements of $S^V$, as these are indexed by subsets of vertices. It would be more proper to view $X_V$ as a random element of $S^{|V|}$ by using the order $\le$. Precisely, we proceed as follows.
Recall the definition of a simulation from Section~\ref{sec:simulation}.
By Theorem~\ref{thm:sim}, for every ordered sequence $v_1,\dots,v_m \in \V$ of distinct vertices, there exists a simulation ${\sf S}_{(v_1,\dots,v_m)}$ of $(X_{v_1},\dots,X_{v_m}) \in S^m$ satisfying that
\[ \E {\sf S}^{\text{time}}_{(v_1,\dots,v_m)}(\omega) \le H(X_V) + 2 .\]
Since the distribution of $X$ is $\Gamma$-invariant, we may suppose that ${\sf S}_{(v_1,\dots,v_m)} = {\sf S}_{(\gamma v_1,\dots,\gamma v_m)}$ for all $\gamma \in \Gamma$ (e.g., by choosing a simulation for a single representative of each orbit, and then setting ${\sf S}_{(v_1,\dots,v_m)}$ to equal the simulation of its representative). Now, for a finite set $V \subset \V$, we let $v_1,\dots,v_m$ be the vertices of $V$, ordered according to $\le$, and set ${\sf S}_V$ to be the simulation $S_{(v_1,\dots,v_m)}$, where, for notational convenience, we interpret ${\sf S}^{\text{out}}_V$ as an element of $S^V$ (indexed by $V$) through the identification ${\sf S}^{\text{out}}_V(\omega)_{v_i} = {\sf S}^{\text{out}}_{(v_1,\dots,v_m)}(\omega)_i$.
We stress that ${\sf S}_V$ implicitly depends on the order $\le$.

As we will also encounter situations in which regions of $X$ have already been sampled, we will also need simulations of the distribution of $X_V$ conditioned on $X_U$ for some finite set $U \subset \V$ which is disjoint from $V$. Thus, for every such $V$ and $U$ and every $\tau \in S^U$, we similarly let ${\sf S}_{V,U,\tau}$ be a simulation of $\Pr(X_V \in \cdot \mid X_U=\tau)$ satisfying that
\begin{equation}\label{eq:sim-time-bound}
\E {\sf S}^{\text{time}}_{V,U,\tau}(\omega) \le H(X_V \mid X_U=\tau) + 2 .
\end{equation}
For ease of notation later on, we allow $V$ and $U$ to intersect and we allow $\tau$ to have any domain containing $U$, by interpreting ${\sf S}_{V,U,\tau}$ as ${\sf S}_{V \setminus U,U,\tau_U}$ in such a case.
We also identify ${\sf S}_V$ with ${\sf S}_{V,\emptyset,\emptyset}$.

Recall that every cell $\cC$ in $A_n$ that is not contained in $A_{n-1}$ has an associated level~$n$ agent, and that this agent is ``responsible'' for generating the output on $\cC \setminus A_{n-1}$.
We denote by $A_n(v)$ the cell of $v$ in $A_n$, where $A_n(v):=\emptyset$ if $v \notin A_n$, by $U_n$ the set of level $n$ agents and, for $v \in A_n \setminus A_{n-1}$, by $U_n(v)$ the level $n$ agent associated to the cell $A_n(v)$.

\smallbreak

With the above notation and definitions, we may now proceed to construct the finitary coding.
Our goal is to define a random field $Z^t=(Z^t_v)_{v \in \V}$, which represents the \emph{output at time $t$}. This output will be a function of $(L^i_u,M^i_u,W^i_u)_{u \in \V, 0 \le i \le t}$ (and of course of the cell process $A$ and the total order $\le$). Once $Z^t$ is defined for some $t$, it will then only remain to inductively define $(L^{t+1}_u)_{u \in \V}$, as this then also defines $(M^i_u,W^i_u)_{u \in \V, 0 \le i \le t+1}$ through the definitions above. This will therefore define $Z^{t+1}$ as well. We will then take a limit as $t \to \infty$ in order to obtain the \emph{output} $Z=(Z_v)_{v \in \V}$, which is the desired realization of $X$.

To facilitate the inductive definition of $(L^{t+1}_u)_{u \in \V}$, we require some more definitions. We now regard $t \ge 0$ as fixed and suppose that $(L^i_u)_{u \in \V, 0 \le i \le t}$, and hence also $(M^i_u,W^i_u)_{u \in \V, 0 \le i \le t}$, are already defined.
We define two notions for a level $n$ agent: that of having reached level $n$ at time $t$, and that of having completed level $n$ of the simulation by time $t$. We define these notions inductively on $n$. We thus begin with level 1 agents.
Given a level $1$ agent $u \in U_1$, we say that
\begin{itemize}
	\item $u$ \emph{reached} level $1$ by time $t$ (always, with no condition).
	\item $u$ \emph{completed} level $1$ by time $t$ if the stopping time ${\sf S}^{\text{time}}_{A_1(u)}$ has been reached on input $W^t_{u,1}$.
\end{itemize}
Once a level 1 agent has completed level 1 of the simulation, the output is known on the corresponding level 1 cell of the agent. That is, if a level 1 agent $u$ completed level 1 by time~$t$, then we will have
\[ Z^t_v = {\sf S}^{\text{out}}_{A_1(u)}(W^t_{u,1})_v \qquad\text{for all }v \in A_1(u) .\]
To be more precise, let us define $Z^{t,1}=(Z^{t,1}_v)_{v \in \V}$ by
\[ Z^{t,1}_v := \begin{cases}
 {\sf S}^{\text{out}}_{A_1(u)}(W^t_u)_v &\text{if }v \in A_1(u)\text{ for some }u \in U_1\text{ and $u$ completed level $1$ by time $t$} \\\emptyset &\text{otherwise}
\end{cases} .\]
We will soon also define $Z^{t,n}=(Z^{t,n}_v)_{v \in \V}$ for $n \ge 2$, with the idea that it represents the known output on all cells of level at most $n$ for which the simulation has completed. In particular, if $Z^{t,n}_v \neq \emptyset$ for some~$v$ and~$n$, then it will be the case that $Z^{t,n+1}_v = Z^{t,n}_v$.
Now fix $n \ge 2$ and suppose that we have defined $Z^{t,1},\dots,Z^{t,n-1}$ and the two notions (reached and completed) for levels less than $n$, in such a way that the above property holds -- namely, if a level $m \in \{1,\dots,n-1\}$ agent~$u$ completed level $m$ by time $t$, then the output is known on $A_m(u)$ at time $t$ in the sense that $Z^{t,n-1}_v = Z^{t,m}_v \neq \emptyset$ for all $v \in A_m(u)$.
Then, for a level $n$ agent $u \in U_n$, we say that
\begin{itemize}
	\item $u$ \emph{reached} level $n$ by time $t$ if every level $n-1$ agent $u' \in U_{n-1} \cap A_n(u)$ has completed level $n-1$ by time $t$.
\end{itemize}
	The idea here is that if $u$ reached level $n$ by time $t$, then the output is known on $A_n(u) \cap A_{n-1}$ at time~$t$, and we may use this information to start generating the output on the remaining part of the cell, namely, on $A_n(u) \setminus A_{n-1}$. That is, we use the simulation ${\sf S}_{V,U,\tau}$ with $V=A_n(u)$, $U=A_n(u) \cap A_{n-1}$ and $\tau = Z^{t,n-1}$. We thus say that
\begin{itemize}
	\item $u$ \emph{completed} level $n$ by time $t$ if it reached level $n$ by time $t$ and the stopping time
	\[ {\sf S}^{\text{time}}_{A_n(u),A_n(u) \cap A_{n-1},Z^{t,n-1}} \]
	has been reached on input $W^t_u$.
\end{itemize}
Putting this together leads to defining $Z^{t,n}=(Z^{t,n}_v)_{v \in \V}$ by
\[ Z^{t,n}_v := \begin{cases}
	{\sf S}^{\text{out}}_{A_n(u),A_n(u) \cap A_{n-1},Z^{t,n-1}}(W^t_u)_v &\substack{\text{if }v \in A_n \setminus A_{n-1}\text{ and }U_n(v)=u\text{ for some }u\\\text{ and $u$ completed level $n$ by time }t}\\
	\emptyset &\text{otherwise}
\end{cases} .\]
The \emph{output $Z^t=(Z^t_v)_{v \in \V}$ at time $t$} is then defined by $Z^t_v := \lim_{n \to \infty} Z^{t,n}_v$.
That is, to determine $Z^t_v$, we first look at the level $n$ at which $v$ enters the cell process, and then consider the level $n$ agent $u$ responsible for generating the output on the cell of $v$ in $A_n$.
If $u$ has indeed completed level $n$ by time $t$, then we read the value of $Z^t_v$ from the output of the corresponding simulation.

Finally, we are ready to define $(L^{t+1}_u)_{u \in \V}$.
As mentioned, these numbers are always zero for non-agents, i.e., we set $L^{t+1}_u:=0$ for $u \notin U_1 \cup U_2 \cup \cdots$. Suppose now that $u \in U_n$ for some $n \ge 1$.
We say that $u$ is \emph{active at time $t+1$} if it has reached, but has not completed, level $n$ by time $t$. Thus, if $u$ is active at time $t+1$, then ideally it would like to read a bit at that time, and indeed it may do so as long as there is an available bit at $u+t+1$ (recall that $u$ may only read a bit from location $u+t+1$ at time $t+1$). This leads us to define
\begin{equation}\label{eq:L^t}
L^{t+1}_u := \1\big(u\text{ is active at time $t+1$ and }M^t_{u+t+1} < |Y^{\text{bits}}_{u+t+1}|\big) .
\end{equation}
This completes the inductive definition of $L^{t+1}_u$ for all $t \ge 0$ and $u \in \V$.

We note that, by construction, once the output at a vertex is determined at some time, it remains unchanged at future times -- that is, if $Z^t_v \neq \emptyset$ for some $v$ and $t$, then $Z^{t+1}_v = Z^t_v$. Denote by
\[ T_v := \min\{t \ge 0 : Z^t_v \neq \emptyset \} \]
the time at which the output at $v$ is first determined.
The \emph{output} $Z=(Z_v)_{v \in \V}$ is then given by
\[ Z_v := \lim_{t \to \infty} Z^t_v = \begin{cases} Z_v^{T_v} &\text{if }T_v<\infty \\ \emptyset &\text{if }T_v=\infty \end{cases} .\] 
This completes the construction of the finitary factor.

We record for later use the following simple property of the construction.

\begin{lemma}\label{lem:time-measurability}
For any $t \ge 0$, we have that $(W^t_u,Z^t_u)_{u \in \V}$ and $(L^{t+1}_u)_{u \in \V}$ are measurable with respect to $Y^{\text{cell}}$, $Y^{\text{ord}}$, $(|Y^{\text{bits}}_v|)_{v \in \V}$ and $(Y^{\text{bits}}_v(i))_{v \in \V, 1 \le i \le M_v^t}$.
\end{lemma}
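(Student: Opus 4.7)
The plan is to prove the lemma by induction on $t \ge 0$. Let $\mathcal{F}_t$ denote the (random) $\sigma$-algebra generated by $Y^{\text{cell}}$, $Y^{\text{ord}}$, $(|Y^{\text{bits}}_v|)_{v \in \V}$, and $(Y^{\text{bits}}_v(i))_{v \in \V,\, 1 \le i \le M^t_v}$. The claim is then that $(W^t_u, Z^t_u)_u$ and $(L^{t+1}_u)_u$ are $\mathcal{F}_t$-measurable. Before launching the induction, I would observe the structural fact that the cell process $A$ is a factor of $Y^{\text{cell}}$ alone and the order $\le$ (with its successor/predecessor maps $v\mapsto v\pm n$) is a factor of $Y^{\text{ord}}$ alone; in particular, $A_n(\cdot)$, the agent sets $U_n$, and the simulations ${\sf S}_{V,U,\tau}$ (which depend only on $V, U, \tau$ and on $\le$) are all measurable with respect to $\sigma(Y^{\text{cell}}, Y^{\text{ord}})$.

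For the base case $t=0$, the formula \eqref{eq:L^0} shows that $L^0_u$ is measurable with respect to $Y^{\text{cell}},Y^{\text{ord}},(|Y^{\text{bits}}_v|)_v$, hence $M^0_v = L^0_v$ is too. Since $\hat W^0_u$ equals $Y^{\text{bits}}_u(1)$ when $L^0_u=1$ and is empty otherwise, and $W^0_u=\hat W^0_u$, we get $\mathcal{F}_0$-measurability of $W^0$. Then $Z^{0,n}$ is built inductively in $n$ from $W^0$, $A$, $\le$ and the simulations, so $Z^0$ is $\mathcal{F}_0$-measurable. Finally $L^1_u$ is determined by the activity-at-time-$1$ predicate (a function of $Z^0$ together with $A$ and the simulations) together with the comparison $M^0_{u+1}<|Y^{\text{bits}}_{u+1}|$, both $\mathcal{F}_0$-measurable.

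For the inductive step, assume the claim for $t$ and deduce it for $t+1$. Writing $M^{t+1}_v = M^t_v + L^{t+1}_{v-(t+1)}$, the inductive hypothesis gives that $M^{t+1}_v$ is $\mathcal{F}_t$-measurable (and in particular $\mathcal{F}_{t+1}\supseteq\mathcal{F}_t$ is well-defined). The crucial point is that on the event $L^{t+1}_u=1$ the definition \eqref{eq:L^t} forces $M^t_{u+t+1}<|Y^{\text{bits}}_{u+t+1}|$, so that $M^{t+1}_{u+t+1}=M^t_{u+t+1}+1\le |Y^{\text{bits}}_{u+t+1}|$ and the bit
\[
\hat W^{t+1}_u = Y^{\text{bits}}_{u+t+1}\bigl(M^{t+1}_{u+t+1}\bigr)
\]
lies within the range of indices that appear in the generating family of $\mathcal{F}_{t+1}$. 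Hence $\hat W^{t+1}_u$, and therefore $W^{t+1}_u = W^t_u\circ \hat W^{t+1}_u$, is $\mathcal{F}_{t+1}$-measurable. Unwinding the inductive definition of $Z^{t+1,n}$ over $n$, using only $W^{t+1}$, $A$, $\le$ and the simulations, gives $\mathcal{F}_{t+1}$-measurability of $Z^{t+1}$. Then $L^{t+2}_u$ is determined by whether $u$ is active at time $t+2$ (a function of $Z^{t+1}$ through the reached/completed predicates) together with the comparison $M^{t+1}_{u+t+2}<|Y^{\text{bits}}_{u+t+2}|$, both of which are $\mathcal{F}_{t+1}$-measurable.

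The main subtlety to get right is the nested randomness in $\mathcal{F}_t$: the very range of indices of the bits being conditioned on is random, so one must verify at each step that $M^{t+1}_v$ can be computed from quantities already shown to be $\mathcal{F}_t$-measurable before one has the right to invoke ``the bits up to $M^{t+1}_v$.'' The induction above is arranged so that $M^{t+1}$ is established before any index-$M^{t+1}$ bit is needed, which dissolves this apparent circularity; the remainder is bookkeeping using the deterministic recursive formulas for $\hat W^t, W^t, Z^{t,n}, L^{t+1}$ and the $\sigma(Y^{\text{cell}},Y^{\text{ord}})$-measurability of $A$, $\le$, and the simulations.
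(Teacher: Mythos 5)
Your proof is correct and is exactly the induction on $t$ that the paper's one-line proof ("straightforward from the definitions") refers to, with the random-index subtlety for $M^{t+1}_v$ handled properly. Nothing further is needed.
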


\begin{proof}
The proof by induction on $t$ is straightforward from the definitions.
\end{proof}

\subsection{Concluding Theorem~\ref{thm:main}}

To conclude the proof of Theorem~\ref{thm:main}, we must establish two properties of the above construction: that the output it produces has the desired distribution, and that the output can be determined from $(Y^{\text{bits}},Y^{\text{cell}},Y^{\text{ord}})$ in a finitary manner.
The former is stated in the following proposition whose proof is postponed to Section~\ref{sec:correct-distribution} below.

\begin{prop}\label{prop:correct-distribution}
The output $Z$ has the same distribution as $X$.
\end{prop}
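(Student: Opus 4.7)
I would condition on $(Y^{\text{cell}}, Y^{\text{ord}})$—which are independent of $Y^{\text{bits}}$ and together determine $A$ and $\le$—and prove by induction on $n \ge 1$ that $Z|_{A_n}$ has the same conditional distribution as $X|_{A_n}$ given $(A, \le)$, on the event that every level-$m$ agent with $m \le n$ completes its simulation in finite time. Granting that this event has probability $1$ (a claim justified by the separate ``finitary'' argument using the entropy surplus $\E|Y^{\text{bits}}_v| > h(X) + 3\epsilon$ together with Lemma~\ref{lem:small-boundary} and the F{\o}lner-type estimate~\eqref{eq:entropy-via-boundary}), and since $A_n \uparrow \V$ almost surely, taking $n \to \infty$ yields $Z \eqd X$.

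For the base case $n=1$, distinct cells of $A_1$ are at graph distance greater than~$1$ (otherwise they would share an edge and hence coincide), so by the $1$-dependence of $X$ the fields $\{X_{A_1(u)}\}_{u \in U_1}$ are mutually independent. Each level-$1$ agent $u$ feeds the bits it reads into ${\sf S}_{A_1(u)}$, which by Theorem~\ref{thm:sim} outputs a sample of $X_{A_1(u)}$ when fed \iid\ unbiased bits. Provided that the bit-sequences read by distinct agents are disjoint prefixes of \iid\ unbiased sequences (discussed below), this yields independent samples with the correct marginals, so $Z|_{A_1} \eqd X|_{A_1}$.

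For the inductive step, any cell $\cC$ of $A_n$ is a connected component of $A_n$ and so $\dist(\cC, A_{n-1} \setminus \cC) \ge \dist(\cC, A_n \setminus \cC) > 1$. By $1$-dependence, $X_\cC \perp X_{A_{n-1} \setminus \cC}$, so the conditional law of $X_\cC$ given $X_{A_{n-1}}$ depends only on $X_{\cC \cap A_{n-1}}$. Combined with the mutual independence of $\{X_\cC\}_\cC$ across cells of $A_n$, this shows that conditional on $X_{A_{n-1}}$ the fields $\{X_{\cC \setminus A_{n-1}}\}_\cC$ are mutually independent, each with law $\Pr(X_{\cC \setminus A_{n-1}} \in \cdot \mid X_{\cC \cap A_{n-1}})$. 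The level-$n$ agent for $\cC$ runs precisely the simulation ${\sf S}_{\cC, \cC \cap A_{n-1}, Z|_{A_{n-1}}}$ targeting this conditional law, using bits disjoint from those of all other agents; combined with the induction hypothesis $Z|_{A_{n-1}} \eqd X|_{A_{n-1}}$, this gives $Z|_{A_n} \eqd X|_{A_n}$.

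The main obstacle is establishing that the bits read by any given agent form a prefix of an \iid\ unbiased sequence, and that these sequences are mutually independent across agents—a claim made delicate because the dynamics of the construction themselves depend on previously read bits. The resolution rests on two points: (i)~conditional on $(|Y^{\text{bits}}_v|)_v$, the family $\{Y^{\text{bits}}_v(i) : 1 \le i \le |Y^{\text{bits}}_v|\}$ consists of \iid\ unbiased bits independent of $(Y^{\text{cell}}, Y^{\text{ord}})$; and (ii)~by construction each bit is read at most once and in the deterministic positional order $i = 1, 2, \ldots$ at each site, by an agent whose identity is determined without reference to unread bits, essentially the measurability assertion of Lemma~\ref{lem:time-measurability}. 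A standard optional-sampling argument—or, equivalently, a coupling with an enlarged model in which each $Y^{\text{bits}}_v$ is extended to an infinite \iid\ unbiased sequence—then confirms the required independence properties and validates the application of Theorem~\ref{thm:sim} throughout the induction.
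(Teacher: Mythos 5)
Your distributional argument follows essentially the same route as the paper: you condition on $(Y^{\text{cell}},Y^{\text{ord}})$, induct over the levels of the cell process, use $1$-dependence together with the separation of cells to get independence across cells and the reduction of the conditional law of $X_\cC$ given $X_{A_{n-1}}$ to conditioning on $X_{\cC\cap A_{n-1}}$ only, and justify feeding the simulations with your point (ii) plus the coupling in which each $Y^{\text{bits}}_v$ is extended by an infinite fresh unbiased sequence. That coupling is exactly the paper's Lemma~\ref{lem:distrubition-of-words} (the words $W^t_u\circ\omega_u$ are \iid\ uniform sequences conditionally on $(Y^{\text{cell}},Y^{\text{ord}})$, proved by induction on $t$ via Lemma~\ref{lem:time-measurability}), and your induction over levels is Proposition~\ref{prop:correct-cond-distribution-on-cell} in only slightly different bookkeeping (all of $A_n$ at once rather than one cell of $A_n$ at a time). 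So far, so good.

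The genuine gap is the claim you ``grant'': that almost surely every agent completes its simulation, equivalently $T_v<\infty$ for all $v$, which is needed even to conclude $Z_v\neq\emptyset$ a.s.\ and is therefore an inseparable part of the proposition, not an external input. Your pointer to ``the separate finitary argument'' is circular: in the paper the finiteness of the coding radius is \emph{deduced from} $T_v<\infty$ (Lemma~\ref{prop:ouput-known-in-finite-time}), which in turn rests on Lemma~\ref{lem:eventually-inactive}. Moreover, the ingredients you list (the surplus $\E|Y^{\text{bits}}_v|>h(X)+3\epsilon$, Lemma~\ref{lem:small-boundary}, and \eqref{eq:entropy-via-boundary}) do not by themselves rule out starvation: an individual agent may need far more bits than its cell locally provides, and the whole point of the $u+t$ transfer mechanism is that this must be controlled \emph{globally}. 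The missing mechanism is the mass-transport accounting: first \eqref{eq:mass-transport} applied to ``$u$ read a bit from $v$'' gives $\E|W^\infty_u|=\E M^\infty_v$, and ergodicity upgrades a positive probability of perpetual activity to $M^\infty_v=|Y^{\text{bits}}_v|$ everywhere, hence $\E|W^\infty_u|=\E|Y^{\text{bits}}_v|$; a second mass-transport spreading each agent's cost $|W^\infty_u|$ over its serviced cell, combined with the efficiency bound \eqref{eq:sim-time-bound}, the bounds \eqref{eq:entropy-via-boundary}--\eqref{eq:entropy-trivial-bound} and the choice \eqref{eq:small-cluster-boundary}, forces $\E|Y^{\text{bits}}_v|<h(X)+3\epsilon$, contradicting \eqref{eq:entropy-choice}. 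Without supplying this (or an equivalent averaging argument), your proof establishes the correct conditional law only on an event whose full measure has not been shown, so the proposition is not yet proved.
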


\begin{proof}[Proof of Theorem~\ref{thm:main}]
The random field $Z$ is clearly a deterministic and $\Gamma$-equivariant function $\varphi$ of $(Y^{\text{bits}},Y^{\text{cell}},Y^{\text{ord}})$. Thus, in light of Proposition~\ref{prop:correct-distribution}, we must only show that $\varphi$ is finitary. Since $T_v$ is almost surely finite (as $Z_v \neq \emptyset$ almost surely by Proposition~\ref{prop:correct-distribution}), it suffices to show that $Z^t$ is finitary for every $t \ge 0$. This follows rather easily from the construction. To see this, we explain how to determine the value of $Z^t_v$ in a finitary manner.

We begin by finding the level $n$ in which $v$ enters the cell process, i.e., $v \in A_n \setminus A_{n-1}$, and then finding the cell $A_n(v)$ of $v$ in $A_n$. Since $A$ is a finitary factor of $Y^{\text{cell}}$, this may be done in a finitary manner.
Next, we find the level $n$ agent $U_n(v)$ associated to the cell $A_n(v)$. Since this is just the $\le$-minimal element in $A_n(v) \setminus A_{n-1}$, and since the order $\le$ is a finitary factor of $Y^{\text{ord}}$, this may also be done in a finitary manner.

Let us suppose by induction that all steps of the construction up to time $t-1$ are finitary.
Thus, recalling the definition of active, we see that, for any vertex $w$, we may determine in a finitary manner whether $w$ is active at time $t$. Since successors/predecessors in $\le$ may be found in a finitary manner from $Y^{\text{ord}}$, it then also follows that $L^t_w$ may be determined in a finitary manner. Using again that successors/predecessors may be found in a finitary manner, we conclude that $W^t_w$ may be found in a finitary manner.

We would now like to check whether $u$ completed level $n$ by time $t$, and if so, find the output value.
To check this, we start at level 1 and work our way up to level $n$.
Thus, we first find all level $1$ agents which are contained in $A_n(v)$ (since the cell process and total order are finitary, this can be done in a finitary manner). Next, for each such agent $u$, we check whether $u$ completed level~1 by time $t$. Recall that the simulation ${\sf S}_{A_1(u)}$ depends on the cell $A_1(u)$ and on the order induced by $\le$ on $A_1(u)$. Since the input word $W^t_u$, the cell process and the order are finitary, we see that we may determine whether $u$ completed level 1 by time $t$ in a finitary manner, and if so, also determine the output $Z^{t,1}_w$ for all $w \in A_1(u)$ in a finitary manner.

We now proceed to the next levels. Consider some level $2 \le m \le n$. We again begin by finding all level $m$ agents which are contained in $A_n(v)$. For each such agent $u$, we check whether $u$ reached level $m$ by time $t$. For this we must check whether the level $m-1$ agents in $A_m(u)$ completed level $m-1$ by time $t$, which, by induction, may be done in a finitary manner. If $u$ reached level $m$ by time $t$, we then check whether $u$ completed level $m$ by time $t$. Similarly to before, the simulation ${\sf S}_{A_m(u),A_m(u) \cap A_{m-1},Z^{t,m-1}}$ depends on $A_m(u)$, $A_m(u) \setminus A_{m-1}$, the order induced by $\le$ on $A_m(u)$, and on $(Z^{t,m-1}_w)_{w \in A_m(u) \cap A_{m-1}}$. Since the input word $W^t_u$, the cell process and the order are finitary, we see that we may determine whether $u$ completed level $m$ by time $t$ in a finitary manner, and if so, also determine the output $Z^{t,m}_w$ for all $w \in A_m(u)$ in a finitary manner.

Continuing up to level $m=n$ yields that $Z^{t,n}_v$ may be determined in a finitary manner.
Since $n$ is the level in which $v$ enters the cell process, we have by definition that $Z^t_v = Z^{t,n}_v$.
Thus, $Z^t_v$ may be determined in a finitary manner, as required.
\end{proof}

\subsection{The output has the correct distribution}
\label{sec:correct-distribution}

In this section, we prove Proposition~\ref{prop:correct-distribution}.
The proof is split up into several steps. The first step is the following lemma which formalizes the intuition that the simulations used in the construction are `fed' independent unbiased bits.

\begin{lemma}\label{lem:distrubition-of-words}
Let $\omega \in \{0,1\}^\N$ consist of a sequence of independent unbiased bits. Let $(\omega_u)_{u \in \V}$ be a collection of \iid\ copies of $\omega$, independent of $(Y^{\text{bits}},Y^{\text{cell}},Y^{\text{ord}})$.
Then, for any $t \ge 0$, conditioned on $(Y^{\text{cell}},Y^{\text{ord}})$, the collection $(W^t_u \circ \omega_u)_{u \in \V}$ has the same distribution as $(\omega_u)_{u \in \V}$.
\end{lemma}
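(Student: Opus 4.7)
The plan is to prove the lemma by induction on $t \ge 0$, showing that, conditionally on $(Y^{\text{cell}}, Y^{\text{ord}})$, the family $(W^t_u \circ \omega_u)_{u \in \V}$ is distributed as $(\omega_u)_{u \in \V}$, i.e., as independent infinite sequences of unbiased bits.

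For the base case $t = 0$, note that $W^0_u$ is either empty or equal to the single bit $Y^{\text{bits}}_u(1)$, with the switch $L^0_u = \1_{u \in U_1,\, |Y^{\text{bits}}_u|>0}$ measurable with respect to $(Y^{\text{cell}}, Y^{\text{ord}}, (|Y^{\text{bits}}_v|)_v)$. Conditioning further on $(|Y^{\text{bits}}_v|)_v$, the bits $(Y^{\text{bits}}_u(1))_{u : L^0_u = 1}$ are \iid\ uniform and independent of $\omega$, so each $W^0_u \circ \omega_u$ is a uniform sequence and the different $u$'s give mutually independent sequences; integrating out the lengths yields the claim conditionally on $(Y^{\text{cell}}, Y^{\text{ord}})$ alone.

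For the inductive step $t-1 \to t$, I would set $\tilde\omega_u := \hat{W}^t_u \circ \omega_u$, so that $W^t_u \circ \omega_u = W^{t-1}_u \circ \tilde\omega_u$, and show that the joint law of $\bigl((W^{t-1}_u)_u, (\tilde\omega_u)_u\bigr)$ conditional on $(Y^{\text{cell}}, Y^{\text{ord}})$ coincides with that of $\bigl((W^{t-1}_u)_u, (\omega_u)_u\bigr)$. Applying the inductive hypothesis with $\tilde\omega$ in place of $\omega$ will then finish the proof. The key observation is that whenever $L^t_u = 1$, the inserted bit $\hat{W}^t_u = Y^{\text{bits}}_{u+t}(M^{t-1}_{u+t}+1)$ sits at a location-index pair whose index strictly exceeds $M^{t-1}_{u+t}$, i.e., has not been read by time $t-1$; since the bits of $Y^{\text{bits}}$ are mutually independent and uniform conditional on the lengths $(|Y^{\text{bits}}_v|)_v$, this bit is uniform and independent of all data revealed by time $t-1$ and of $\omega$. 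Furthermore, $u \mapsto u+t$ is injective on $\V$, so distinct agents reading at time $t$ read from distinct locations, making the bits $(\hat{W}^t_u)_{u : L^t_u = 1}$ mutually independent. By Lemma~\ref{lem:time-measurability}, the vector $(L^t_u)_u$ is measurable with respect to the data revealed by time $t-1$, so I can condition on $(L^t_u)_u$ and check case-by-case that $\tilde\omega_u$ is a uniform sequence independent of the past; integrating over $(L^t_u)_u$ then yields the claimed joint identity.

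The main obstacle is the data-dependence of the switch $L^t_u$, which determines whether $\tilde\omega_u$ is prepended with a fresh bit and hence how to couple it with $\omega_u$. This is resolved by exploiting the predictability of $(L^t_u)_u$ granted by Lemma~\ref{lem:time-measurability}, which lets me condition on $(L^t_u)_u$ first and only then invoke the independence of the freshly read bits of $Y^{\text{bits}}$ coming from their \iid\ structure.
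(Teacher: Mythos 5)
Your proposal is correct and follows essentially the same route as the paper: induction on $t$, reducing the step to showing that, given the data revealed by the previous time (via Lemma~\ref{lem:time-measurability}), the freshly read bits $(\hat W^t_u)_{u:L^t_u=1}$ are conditionally i.i.d.\ unbiased and independent of everything already revealed, using that distinct agents read from distinct locations and that each $Y^{\text{bits}}_v$ is uniform given its length. The only cosmetic differences are your explicit $t=0$ base case (the paper uses a trivial $t=-1$ case) and conditioning on $(L^t_u)_u$ together with the time-$(t-1)$ data rather than naming the $\sigma$-algebra $\cF$ as the paper does.
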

\begin{proof}
We prove the statement by induction on $t$, taking $t=-1$ as a trivial base case (where $W^{-1}_u:=\emptyset$ for all $u \in \V$). Suppose now that we know it for some $t \ge -1$ and let us show it for $t+1$.
Recall that $W^{t+1}_u = W^t_u \circ \hat{W}^{t+1}_u$.
Thus, we need to show that, conditioned on $(Y^{\text{cell}},Y^{\text{ord}})$, the collection $(W^t_u \circ \hat{W}^{t+1}_u \circ \omega_u)_{u \in \V}$ has the same distribution as $(\omega_u)_{u \in \V}$.
To this end, it suffices to show that, conditioned on $(Y^{\text{cell}},Y^{\text{ord}})$, the collections $(W^t_u)_{u \in \V}$ and $(\hat{W}^{t+1}_u \circ \omega_u)_{u \in \V}$ are independent and that the conditional distribution of the latter is that of $(\omega_u)_{u \in \V}$. Indeed, the induction hypothesis will then yield the desired result.

We may restate our goal as showing that, conditioned on $(Y^{\text{cell}},Y^{\text{ord}})$ and $(W^t_u)_{u \in \V}$, the collection $(\hat{W}^{t+1}_u \circ \omega_u)_{u \in \V}$ has the same distribution as $(\omega_u)_{u \in \V}$.
Let $\cF$ be the $\sigma$-algebra generated by $Y^{\text{cell}}$, $Y^{\text{ord}}$, $(|Y^{\text{bits}}_v|)_{v \in \V}$ and $(Y^{\text{bits}}_v(i))_{v \in \V, 1 \le i \le M^t_v}$.
By Lemma~\ref{lem:time-measurability},
\[ (W^t_u)_{u \in \V} \qquad\text{and}\qquad Q := \big\{ u : \hat{W}^{t+1}_u \neq \emptyset \big\} = \big\{ u : L^{t+1}_u=1 \big\} \]
are $\cF$-measurable. Since $(\omega_u)_{u \in \V}$ is independent of $(Y^{\text{bits}},Y^{\text{cell}},Y^{\text{ord}})$ and hence also of $\cF$, it suffices to show that, conditioned on $\cF$, the random variables $(\hat{W}^{t+1}_u)_{u \in Q}$ are independent unbiased bits.

Note that $Q$ is the set of vertices (agents) that read a bit at time $t+1$, that
\[ Q':= \{ v : M^{t+1}_v>M^t_v \} \]
is the set of vertices from which a bit was read at time $t+1$, and that $u \mapsto u+t+1$ defines a $\cF$-measurable bijection from $Q$ to $Q'$. Recall also that $\hat{W}^{t+1}_u=Y^{\text{bits}}_{u+t+1}(M^{t+1}_{u+t+1})$ for $u \in Q$.
Thus, it suffices to show that, conditioned on $\cF$, the random variables $(Y^{\text{bits}}_v(M^{t+1}_v))_{v\in Q'}$ are independent unbiased bits.

Indeed, since $Q'$ and $(M^{t+1}_v)_{v \in Q'}$ are $\cF$-measurable by Lemma~\ref{lem:time-measurability}, since $M^{t+1}_v>M^t_v$ for all $v \in Q'$, and since $Y^{\text{bits}}$ is an \iid\ process that is independent of $(Y^{\text{cell}},Y^{\text{ord}})$, we see that the random variables $(Y^{\text{bits}}_v(M^{t+1}_v))_{v \in Q'}$ are conditionally independent given $\cF$, and that, for any $v \in Q'$, the conditional distribution of $Y^{\text{bits}}_v(M^{t+1}_v)$ is the same as the distribution of $Y^{\text{bits}}_v(M^{t+1}_v)$ given $|Y^{\text{bits}}_v|$. Since $Y^{\text{bits}}_v$ is a random number of random bits, the latter is the distribution of an unbiased bit, and the proof is complete.
\end{proof}

We will use the above lemma for fixed $t$ and then let $t$ tend to infinity. In doing so, we will encounter the limiting word $W^\infty_u := \lim_{t \to \infty} W^t_u$. Since $W^{t+1}_u$ extends $W^t_u$, this limit is well-defined and is a word in $\{0,1\}^*$ or $\{0,1\}^\N$ (we will see that it is in fact a finite word almost surely).

The next step towards proving Proposition~\ref{prop:correct-distribution} is to show that the output at every vertex $v$ is eventually determined, i.e., that $Z_v \neq \emptyset$ (equivalently, $T_v < \infty$) almost surely. For this, we first show that every vertex is eventually inactive.

\begin{lemma}\label{lem:eventually-inactive}
Every vertex is almost surely eventually inactive. That is, for any $u \in \V$, there almost surely exists a finite $t_0$ such that $u$ is not active at any time $t \ge t_0$.
\end{lemma}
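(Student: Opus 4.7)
The plan is to split into cases based on $u$. For non-agents, $L^t_u \equiv 0$ by the construction, so they are never active and the claim is trivial. Similarly, any agent $u \in U_n$ that never reaches level $n$ is never active, so the claim also holds trivially. The substantive case is an agent $u \in U_n$ that reaches level $n$ at some finite time $t_0$; here one must show that $u$ almost surely completes level~$n$.

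First I would invoke Lemma~\ref{lem:distrubition-of-words}: conditionally on $(Y^{\text{cell}}, Y^{\text{ord}})$, the bits that $u$ actually reads, followed by an independent tail of unbiased bits, form an iid Bernoulli$(1/2)$ sequence. Since the stopping time ${\sf S}^{\text{time}}$ of the relevant simulation is almost surely finite on such an input (Theorem~\ref{thm:sim}), whenever $u$ reads infinitely many bits $u$ completes almost surely. Hence the only remaining scenario is that $u$ reads only finitely many bits; by~\eqref{eq:L^t} this means $u$ is blocked at every sufficiently large time $t$, so all bits at the location $u+t$ are already consumed before $u$'s attempt.

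To rule this scenario out, the main step would be a mass-transport argument. Let $R_u$ denote the total bits read by $u$ (with $R_u = 0$ for non-agents and $R_u \le D_u$ almost surely, where $D_u$ is the total demand of $u$'s simulation) and $N_v$ the total bits consumed at $v$, so that $N_v \le |Y^{\text{bits}}_v|$ almost surely. Since $f(u,v) := \E[\text{bits read by } u \text{ from location } v]$ is diagonally $\Gamma$-invariant, the mass-transport principle~\eqref{eq:mass-transport} yields
\[ \E[R_\zero] \;=\; \E[N_\zero] \;\le\; \E|Y^{\text{bits}}_\zero|. \]
The simulation bound~\eqref{eq:sim-time-bound} together with a second mass-transport argument (redistributing each agent's demand uniformly over its cell) bounds the per-vertex expected level-$1$ demand by $\E\big[(H(X_{A_1(\zero)})+2)/|A_1(\zero)| \cdot \1_{\zero \in A_1}\big]$. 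Splitting this expectation according to whether $|\partial A_1(\zero)| \le \delta |A_1(\zero)|$ (giving per-vertex entropy at most $h(X)+\epsilon$ via~\eqref{eq:entropy-via-boundary}) or not (with probability at most $\epsilon/(\log|S|+2)$ by~\eqref{eq:small-cluster-boundary} and crude bound $\log|S|+2$ via~\eqref{eq:entropy-trivial-bound}) yields a per-vertex demand of at most $h(X)+2\epsilon$, with analogous higher-level contributions similarly controlled. This is strictly less than the supply $\E|Y^{\text{bits}}_\zero| > h(X)+3\epsilon$ guaranteed by~\eqref{eq:entropy-choice}.

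I would then close the argument by induction on the level~$n$: assuming every level-$m$ agent with $m<n$ completes almost surely, finiteness of the cells in $A_n$ implies that $u$ reaches level~$n$ at a finite time, and the mass-transport identity $\E[R_\zero] = \E[N_\zero]$ together with the almost-sure inequality $R_\zero \le D_\zero$ forces $R_\zero = D_\zero$ almost surely, i.e.\ $u$ completes. The main obstacle is precisely this last step --- converting the global expected supply--demand surplus into almost-sure completion at every individual agent --- which uses the strict $+3\epsilon$ margin in~\eqref{eq:entropy-choice} together with the observation that, by $\Gamma$-invariance, any positive-probability blocking event would produce a positive-density deficit of reads against demand, contradicting the mass-transport equality.
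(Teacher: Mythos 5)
Your overall architecture is the right one and largely matches the paper: reduce to agents that have reached their level, note via Lemma~\ref{lem:distrubition-of-words} and Theorem~\ref{thm:sim} that an agent reading infinitely many bits completes almost surely, and then compare supply and demand through two mass-transport computations together with the entropy accounting \eqref{eq:sim-time-bound}, \eqref{eq:entropy-via-boundary}, \eqref{eq:entropy-trivial-bound}, \eqref{eq:small-cluster-boundary} against the strict margin in \eqref{eq:entropy-choice}. However, the final step --- which you correctly flag as the main obstacle --- is where the proposal has a genuine gap, and the mechanism you propose for it does not work. From the identity $\E[R_\zero]=\E[N_\zero]$ and the pointwise bound $R_\zero\le D_\zero$ one cannot conclude $R_\zero=D_\zero$ almost surely; these relations are perfectly compatible with blocking. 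Likewise, a ``positive-density deficit of reads against demand'' contradicts nothing: reads are always at most demand, and your comparison only gives $\E[N_\zero]=\E[R_\zero]\le \E[\text{demand per site}]<h(X)+3\epsilon<\E|Y^{\text{bits}}_\zero|$, so all the inequalities point in the same direction. To get a contradiction you need a \emph{lower} bound on consumption: blocking must force the bits to be exhausted at (essentially) every site, so that $\E[N_\zero]=\E|Y^{\text{bits}}_\zero|>h(X)+3\epsilon$, clashing with the demand bound.

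That is precisely the step the paper supplies and your sketch omits. If an agent $u$ is active at all large times without completing, then by the reading rule \eqref{eq:L^t} every location $u+t$ (the $t$-th successor of $u$ in the random order) has all of its bits consumed by time $t$; since the order almost surely has the order type of $\Z$, this exhausts the entire forward orbit of $u$, and then ergodicity and $\Gamma$-invariance (together with the fact that $\{w+i\}_{i\in\Z}=\V$) upgrade ``all bits used at every site beyond some vertex'' to ``all bits used at every site'' almost surely. Only after this propagation does the first mass-transport identity yield $\E|W^\infty_u|=\E M^\infty_v=\E|Y^{\text{bits}}_v|$, which the second mass-transport argument and the entropy bounds then contradict. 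Your proposal never invokes the $\Z$ order type or ergodicity at this point; without that propagation, a positive-probability blocking event only produces exhaustion at a positive density of sites, which is consistent with $\E[N_\zero]<\E|Y^{\text{bits}}_\zero|$ and hence yields no contradiction. Repairing the last paragraph of your argument along these lines essentially reproduces the paper's proof; the earlier parts of your sketch are sound as stated.
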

\begin{proof}
Define
\[ \psi(u,v) := \1_{\{v=u+t\text{ and }L^t_u=1\text{ for some }t \ge 0\}} .\]
Note that $\psi(u,v)$ indicates whether $u$ read a bit located at $v$. Since an agent may read at most one bit from any location, $\psi(u,v)$ also represents the \emph{number} of bits read by $u$ from location~$v$.
Thus, recalling that $|W^t_u| = L^0_u+L^1_u + \dots + L^t_u$, we have
\[ \sum_v \psi(u,v) = \sum_{t=0}^\infty L^t_u = |W^\infty_u| \qquad\text{and}\qquad \sum_u \psi(u,v) = \sum_{t=0}^\infty L^t_{v-t} = \lim_{t\to\infty} M^t_v =: M^\infty_v .\]
The left-hand side describes the number of bits read \emph{by} a given site $u$, while the right-hand side describes the number of bits read \emph{from} a given site $v$.
The mass-transport principle~\eqref{eq:mass-transport} tells us that these quantities are the same in expectation:
\begin{equation}\label{eq:L^t-vs-M^infty}
\E |W^\infty_u| = \E M^\infty_v .
\end{equation}

Let $E_u$ be the event that $u$ is active at infinitely many times $t$. We wish to show that $\Pr(E_u)=0$. Note that, by~\eqref{eq:L^t}, the event $E_u$ is contained in the event that for all but finitely many $t \ge 0$, all bits at location $u+t$ have been read by time $t$, i.e.,
\begin{align*}
 E_u &\subset \big\{M^t_{u+t} \ge |Y^{\text{bits}}_{u+t}|\text{ for all sufficiently large }t \big\} \\
 &= \big\{M^\infty_{u+t} = |Y^{\text{bits}}_{u+t}|\text{ for all sufficiently large }t \big\} ,
\end{align*}
where the equality follows from the fact that $M^t_v \le M^\infty_v \le |Y^{\text{bits}}_v|$ for all $v \in \V$ and $t \ge 0$.
Suppose now that $\Pr(E_u)>0$. Then by ergodicity, almost surely, $E_w$ occurs for some $w \in \V$, and in particular, there almost surely exists $w \in \V$ such that $M^\infty_{w+i}=|Y^{\text{bits}}_{w+i}|$ for all $i \ge 0$. Since $\{w+i\}_{i \in \Z}=\V$ almost surely, it follows by $\Gamma$-invariance that $M^\infty_v=|Y^{\text{bits}}_v|$ for all $v \in \V$ almost surely.
Thus, by~\eqref{eq:L^t-vs-M^infty},
\begin{equation}\label{eq:L^t-vs-Y^bits}
\E |W^\infty_u| = \E |Y^{\text{bits}}_v| .
\end{equation}
That is, the expected number of bits read by each site is precisely the expected number of available bits per site.
It remains to show that this is impossible.

Define
\[ \phi(u,v) := \begin{cases} \frac{|W^\infty_u|}{|A_n(u) \setminus A_{n-1}|} &\text{if }v \in A_n \setminus A_{n-1}\text{ and }U_n(v)=u \\0&\text{otherwise} \end{cases} .\]
Recall that $U_n(v)$ is the level $n$ agent associated to the cell $A_n(v)$. Since a level $n$ agent $u$ is responsible for simulating the output on $A_n(u) \setminus A_{n-1}$ and does so via the input word $W^\infty_u$, we may think of $\phi(u,v)$ as follows: every level $n$ agent $u$ equally divides a total `cost' of $|W^\infty_u|$ among the vertices it `serviced'. Observe that
\[ \sum_v \phi(u,v) = |W^\infty_u| \qquad\text{and}\qquad \sum_u \phi(u,v) = \frac{|W^\infty_{U_{N_v}(v)}|}{|A_{N_v}(v) \setminus A_{N_v-1}|} ,\]
where $N_v$ is the level at which $v$ entered the cell process, i.e., $v \in A_{N_v} \setminus A_{N_v-1}$, and where we used that $A_n(v)=A_n(u)$ whenever $U_n(v)=u$.
Thus, by~\eqref{eq:L^t-vs-Y^bits} and the mass-transport principle~\eqref{eq:mass-transport},
\begin{equation}\label{eq:Y^bits-vs-W^infty}
\E|Y^{\text{bits}}_v| = \E\left[\frac{|W^\infty_{U_{N_v}(v)}|}{|A_{N_v}(v) \setminus A_{N_v-1}|}\right] .
\end{equation}
This relates the expected number of available bits per site to the length of the input words used by the simulations. We would like to reach a contradiction to the fact that there are many available bits and that the simulation is efficient.

Suppose that $u$ is a level $n$ agent.
It is straightforward from the definitions that the stopping time ${\sf S}^{\text{time}}_{A_n(u),A_n(u) \cap A_{n-1},Z^{t,n-1}}$ is not reached on any prefix of $W^t_u$ that is not $W^t_u$ itself (it may or may not be reached on the entire word $W^t_u$). It therefore follows from Lemma~\ref{lem:distrubition-of-words} that, conditioned on $(Y^{\text{cell}}, Y^{\text{ord}})$,
\[ |W^t_u| \text{ is stochastically dominated by } {\sf S}^{\text{time}}_{A_n(u),A_n(u) \cap A_{n-1},Z^{t,n-1}}(\omega) \cdot \1(\text{$u$ reached level $n$ by time $t$}), \]
where $\omega \in \{0,1\}^\N$ consists of a sequence of independent unbiased bits, independent of $Y$.
Note that, if $u$ reached level $n$ by time $t$, then $Z^{t,n-1}$ coincides with $Z$ on $A_n(u) \cap A_{n-1}$.
Thus, taking expectations and $t \to \infty$, we obtain that
\[ \E\left[ |W^\infty_u| \mid Y^{\text{cell}}, Y^{\text{ord}} \right] \le \E\left[ {\sf S}^{\text{time}}_{A_n(u),A_n(u) \cap A_{n-1},Z}(\omega) \mid Y^{\text{cell}}, Y^{\text{ord}} \right] \cdot \1(\text{$u$ eventually reached level $n$}) .\]
Hence, by~\eqref{eq:sim-time-bound} and~\eqref{eq:entropy-trivial-bound}, on the event that $u \in U_n$, we have
\[ \E\left[ |W_u^\infty| \mid Y^{\text{cell}}, Y^{\text{ord}} \right] \le 2 + \begin{cases}
 H_X(A_1(u)) &\text{if }n=1\\
 |A_n(u) \setminus A_{n-1}| \cdot \log |S| &\text{if }n \ge 2
\end{cases} ,\]
where we denote $H_X(V) := H(X_V)$ for a finite set $V \subset \V$.
Therefore, by~\eqref{eq:entropy-via-boundary} and the choice of $\delta$,
\[ \E\left[\frac{|W^\infty_{U_{N_v}(v)}|}{|A_{N_v}(v) \setminus A_{N_v-1}|} \mid Y^{\text{cell}}, Y^{\text{ord}} \right] \le (h(X)+ \epsilon + 2\delta) \cdot \1_E + (\log|S| + 2) \cdot \1_{E^c} ,\]
where $E$ is the event that $N_v=1$ and $|\partial A_1(v)| \le \delta |A_1(v)|$.
Thus, by~\eqref{eq:Y^bits-vs-W^infty},
\[ \E|Y^{\text{bits}}_v| \le (h(X)+ \epsilon + 2\delta) \cdot \Pr(E) + (\log|S|+2) \cdot \Pr(E^c) .\]
Using~\eqref{eq:delta} and~\eqref{eq:small-cluster-boundary}, we see that $\E|Y^{\text{bits}}_v| < h(X)+3\epsilon$, which contradicts~\eqref{eq:entropy-choice}. We therefore conclude that $\Pr(E_u)=0$ as required.
\end{proof}

We are now ready to show that the output at every vertex is eventually determined.

\begin{lemma}\label{prop:ouput-known-in-finite-time}
For any $v \in \V$, we have that $T_v < \infty$ almost surely.
\end{lemma}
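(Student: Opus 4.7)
The proof proceeds by induction on $n \ge 1$, establishing the following claim: almost surely, every level $n$ agent eventually completes level $n$ (that is, does so in some finite time). Once this claim is in hand, the conclusion follows easily. For any $v \in \V$, the cell process satisfies $A_n \uparrow \V$ almost surely, so almost surely there is a finite $n = N_v$ with $v \in A_n \setminus A_{n-1}$; the associated level $n$ agent $u = U_n(v)$ then completes level $n$ at some finite time $t$, at which moment $Z^t_v = {\sf S}^{\text{out}}_{A_n(u),A_n(u)\cap A_{n-1},Z^{t,n-1}}(W^t_u)_v \neq \emptyset$, and hence $T_v \le t < \infty$.

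For the base case $n=1$, every level $1$ agent has, by definition, reached level $1$ at time $0$. By Lemma~\ref{lem:eventually-inactive}, any such agent $u$ is almost surely eventually inactive; combined with the fact that $u$ has already reached level $1$, the definition of `active' forces $u$ to have completed level $1$. A countable union over $u \in \V$ shows that almost surely every level $1$ agent completes level $1$.

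For the inductive step, fix $n \ge 2$ and assume the claim holds for all $m < n$. Work on the event $\{u \in U_n\}$, which is contained in $\{u \in A_n\}$, so the cell $A_n(u)$ is almost surely finite. Hence $A_n(u)$ contains only finitely many level $(n-1)$ agents, each of which completes level $n-1$ almost surely by the inductive hypothesis; intersecting these finitely many full-measure events shows that $u$ almost surely reaches level $n$. Applying Lemma~\ref{lem:eventually-inactive} to $u$, it is eventually inactive, so by the definition of `active' it must have completed level $n$. A countable union over $u \in \V$ closes the induction.

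The substantive work has already been carried out in Lemma~\ref{lem:eventually-inactive} (via the mass-transport argument, entropy accounting, and the choice of parameters in \eqref{eq:entropy-choice}--\eqref{eq:small-cluster-boundary}); I do not anticipate any further obstacle, as the present lemma is essentially a bookkeeping consequence of eventual inactivity, the finiteness of each cell, and the monotonicity $A_n \uparrow \V$.
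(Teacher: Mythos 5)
Your proof is correct and follows essentially the same route as the paper: an induction over the levels of the cell process whose engine is Lemma~\ref{lem:eventually-inactive}, since an agent that has reached but never completes its level would remain active forever. The paper phrases the induction as $\Pr(Z_v=\emptyset\text{ and }v\in A_n)=0$ and invokes invariance where you take countable unions over agents, but this is only a cosmetic difference.
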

\begin{proof}
Since $A_n$ almost surely increases to $\V$, it suffices to show that $\Pr(Z_v=\emptyset\text{ and }v \in A_n)=0$ for all $n \ge 1$.
We prove this by induction on $n$, taking $n=0$ as a trivial base case by setting $A_0 := \emptyset$.
Let $n \ge 1$ and suppose that $\Pr(Z_v=\emptyset\text{ and }v \in A_{n-1})=0$. By $\Gamma$-invariance, we actually have that $\Pr(Z_w=\emptyset\text{ for some }w \in A_{n-1})=0$.
We may thus assume that $Z_w \neq \emptyset$ for all $w \in A_{n-1}$.
Suppose now that $Z_v=\emptyset$ and $v \in A_n$.
Let $u$ be the level $n$ agent $U_n(v)$ associated to the cell $A_n(v)$. Observe that, by the definition of $Z_v$ and $Z^{t,n}_v$, we have that, for all $t \ge 0$, $u$ did not complete level~$n$ by time $t$. On the other hand, since $Z_w \neq \emptyset$ for all $w \in A_{n-1}$, there exists a finite $t_0 \ge 0$ such that $u$ has reached level~$n$ by time $t_0$. It follows that $u$ is active at time $t$ for every $t > t_0$. By Lemma~\ref{lem:eventually-inactive}, almost surely, no vertex is active at infinitely many times, thus completing the proof that $\Pr(Z_v=\emptyset\text{ and }v \in A_n)=0$.
\end{proof}

Now that we have established that the output at every vertex is eventually determined, it remains to show that the distribution of the output is the correct one, namely, that of $X$. The following immediately implies Proposition~\ref{prop:correct-distribution}.

\begin{prop}\label{prop:correct-cond-distribution-on-cell}
Conditioned on $(Y^{\text{cell}},Y^{\text{ord}})$, $Z$ almost surely has the same distribution as $X$, where we regard $X$ as independent of $(Y^{\text{cell}},Y^{\text{ord}})$.
\end{prop}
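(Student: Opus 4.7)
The plan is to prove the statement by induction on $n$, showing that conditionally on $(Y^{\text{cell}},Y^{\text{ord}})$, the vector $Z_{A_n}$ has the same distribution as $X_{A_n}$. Since $A_n\uparrow \V$ almost surely and Lemma~\ref{prop:ouput-known-in-finite-time} guarantees $Z_v\neq\emptyset$ for every $v$, this will suffice; the base case $n=0$ (with $A_0:=\emptyset$) is vacuous.

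A preparatory step is to enlarge the probability space: introduce an iid family $(\omega_u)_{u\in\V}$ of iid unbiased bit-sequences, independent of $(Y^{\text{bits}},Y^{\text{cell}},Y^{\text{ord}})$, and set $\tilde W_u:=W^\infty_u\circ\omega_u$. Since each $W^\infty_u$ is almost surely a finite word by Lemma~\ref{lem:eventually-inactive}, passing to the limit $t\to\infty$ in Lemma~\ref{lem:distrubition-of-words} gives that, conditional on $(Y^{\text{cell}},Y^{\text{ord}})$, the $(\tilde W_u)_{u\in\V}$ are iid sequences of iid unbiased bits. The point of the extension is that the simulation run by each agent reaches its stopping time on its finite prefix $W^\infty_u$ (since its output is almost surely defined), so by the stopping-time property of simulations the output is unchanged if we replace the input by the full infinite sequence $\tilde W_u$.

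For the inductive step I argue on two fronts. On the $X$ side, the $1$-dependence of $X$ together with the fact that distinct connected components of $A_n$ lie at graph distance at least $2$ implies that, conditionally on $X_{A_{n-1}}$, the family $(X_{A_n(u)\setminus A_{n-1}})_{u\in U_n}$ is mutually independent, each factor distributed as $\Pr(X_{A_n(u)\setminus A_{n-1}}\in\cdot\mid X_{A_n(u)\cap A_{n-1}})$; this is the conditional independence identity invoked in the outline in Section~\ref{sec:outline}. On the $Z$ side, for each $u\in U_n$ we have
\[ Z_{A_n(u)\setminus A_{n-1}}={\sf S}^{\text{out}}_{A_n(u),\,A_n(u)\cap A_{n-1},\,Z}(W^\infty_u)={\sf S}^{\text{out}}_{A_n(u),\,A_n(u)\cap A_{n-1},\,Z}(\tilde W_u), \]
and the defining property of $\sf S$ says that the right-hand side, when fed iid unbiased bits, is a sample of the desired conditional distribution. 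Combined with the inductive hypothesis $Z_{A_{n-1}}\eqd X_{A_{n-1}}$ given $(Y^{\text{cell}},Y^{\text{ord}})$, this will close the induction.

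The step I expect to require the most care is the underlying independence claim: that conditionally on $(Y^{\text{cell}},Y^{\text{ord}})$ \emph{and} on $Z_{A_{n-1}}$, the sequences $(\tilde W_u)_{u\in U_n}$ remain an independent family of iid unbiased bits (so that outputs across different cells of $A_n$ are genuinely conditionally independent). This is precisely why the auxiliary bits $\omega_u$ were introduced. The argument will be that $Z_{A_{n-1}}$ is measurable with respect to $(Y^{\text{cell}},Y^{\text{ord}})$ together with $(W^\infty_{u'})_{u'\in U_1\cup\cdots\cup U_{n-1}}$, and hence with respect to $(Y^{\text{cell}},Y^{\text{ord}})$ and $(\tilde W_{u'})_{u'\in U_{<n}}$; since $U_n\cap U_{<n}=\emptyset$, the iid property of $(\tilde W_u)_{u\in\V}$ conditional on $(Y^{\text{cell}},Y^{\text{ord}})$ supplied by Lemma~\ref{lem:distrubition-of-words} then supplies the required independence between the index sets $U_{<n}$ and $U_n$, and thus between $(\tilde W_u)_{u\in U_n}$ and $Z_{A_{n-1}}$.
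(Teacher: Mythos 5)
Your proposal is correct and takes essentially the same route as the paper: the same padding device $\tilde W_u = W^\infty_u\circ\omega_u$ justified by Lemma~\ref{lem:distrubition-of-words}, the same stopping-time identity $Z_{A_n(u)\setminus A_{n-1}}={\sf S}^{\text{out}}_{A_n(u),A_n(u)\cap A_{n-1},Z}(\tilde W_u)$, and an induction over the levels of the cell process in which $1$-dependence supplies the matching conditional structure on the $X$ side. The only difference is organizational: you carry the induction over all of $A_n$ at once (using conditional independence across all cells and disjointness of the agent sets $U_n$ and $U_{<n}$), whereas the paper argues cell-by-cell, which keeps every index set finite and reduces the global claim to the observation that any finite set is eventually contained in a single cell.
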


\begin{proof}
Throughout the proof, we regard $X$ as independent of $Y^{\text{cell}}$ and $Y^{\text{ord}}$. We also condition on $(Y^{\text{cell}},Y^{\text{ord}})$ throughout the entire proof, without explicitly mentioning this. In particular, \emph{any} statement about distributions or independence should be understood as conditional on $(Y^{\text{cell}},Y^{\text{ord}})$.

Since every finite subset of~$\V$ is almost surely contained in some cell of the cell process, it suffices to show that, for any $n \ge 1$ and any cell $\cC$ of $A_n$, $Z_\cC$ has the same distribution as $X_\cC$.
We prove this by induction on $n$, taking $n=0$ as a trivial  base case (where $A_0:=\emptyset$).

Suppose now that $n \ge 1$. Let $\cC$ be a cell of $A_n$ and denote $\cC' := \cC \cap A_{n-1}$.
We will show that
\begin{equation}\label{eq:distr-on-previous-cell}
Z_{\cC'} \eqd X_{\cC'}
\end{equation}
and
\begin{equation}\label{eq:distr-cond-on-previous-cell}
\Pr(Z_{\cC \setminus \cC'} \in \cdot \mid Z_{\cC'}=\tau)=\Pr(X_{\cC \setminus \cC'} \in \cdot \mid X_{\cC'}=\tau) \qquad\text{for any feasible }\tau \in S^{\cC'} .
\end{equation}
By feasible $\tau$, we mean that $\Pr(X_{\cC'}=\tau)>0$.
The desired equality in distribution $Z_{\cC} \eqd X_{\cC}$ follows immediately from~\eqref{eq:distr-on-previous-cell} and~\eqref{eq:distr-cond-on-previous-cell}.
Both parts require some type of independence, which we now establish.

Let $\omega \in \{0,1\}^\N$ consist of a sequence of independent unbiased bits. Let $(\omega_u)_{u \in \V}$ be a collection of \iid\ copies of $\omega$, independent of $Y^{\text{bits}}$.
By Lemma~\ref{lem:distrubition-of-words}, for any $t \ge 0$, $(W^t_u \circ \omega_u)_u$ has the same distribution as $(\omega_u)_u$.
Taking the limit as $t \to \infty$, we see that $(W^\infty_u \circ \omega_u)_u$ also has the same distribution as $(\omega_u)_u$.

Observe that, by construction, if $C$ is some cell of the cell process, then $Z^t_C$ is a function of $(W^t_u)_{u \in C}$. Taking the limit as $t \to \infty$, it follows that $Z_C$ is a function of $(W^\infty_u)_{u \in C}$. It also follows from the definition of $Z$ and the fact that $T_v < \infty$ for all $v$, that $Z$ is unchanged by concatenating any word to any $W^\infty_u$. In particular, $Z_C$ is also a function of $(W^\infty_u \circ \omega_u)_{u \in C}$.

We now show~\eqref{eq:distr-on-previous-cell}.
To this end, let $\cC_1,\dots,\cC_m$ be the cells in $A_{n-1}$ that are contained in $\cC$, so that $\cC' = \cC_1 \cup \dots \cup \cC_m$.
By the induction hypothesis, $Z_{\cC_j} \eqd X_{\cC_j}$ for every $1 \le j \le m$.
Since $A$ is a cell process, we have that $\dist(\cC_j,\cC_{j'})>1$ for $1 \le j < j' \le m$. Hence, using that $X$ is 1-dependent, we see that $\{X_{\cC_j}\}_{1 \le j \le m}$ are independent. Thus, it remains to show that $\{Z_{\cC_j}\}_{1 \le j \le m}$ are also independent. Since $Z_{\cC_j}$ is a function of $(W^\infty_u\circ \omega_u)_{u \in \cC_j}$, this follows from the fact that $\{W^\infty_u \circ \omega_u\}_{u \in \V}$ are independent.

To complete the proof, it remains to show~\eqref{eq:distr-cond-on-previous-cell}.
Let $w$ be the agent associated to~$\cC$ and recall that $w \in \cC \setminus \cC'$ and that $\cC=A_n(w)$. Note that
\[ Z_{\cC \setminus \cC'} = {\sf S}^{\text{out}}_{\cC,\cC',Z}(W^\infty_w) = {\sf S}^{\text{out}}_{\cC,\cC',Z}(W^\infty_w \circ \omega_w) .\]
Recall that ${\sf S}_{\cC,\cC',Z}$ is shorthand for ${\sf S}_{\cC \setminus \cC',\cC',Z_{\cC'}}$.
Since $W^\infty_w \circ \omega_w$ is independent of $(W^\infty_u \circ \omega_u)_{u \neq w}$, and hence also of $Z_{\cC'}$, we conclude that the conditional distribution of $Z_{\cC \setminus \cC'}$ given that $Z_{\cC'}=\tau$ is equal to the distribution of ${\sf S}^{\text{out}}_{\cC \setminus \cC',\cC',\tau}(W^\infty_w \circ \omega_w)$. Thus, using that $W^\infty_w \circ \omega_w$ has the same distribution as $\omega$, we see that the distribution in question is that of ${\sf S}^{\text{out}}_{\cC \setminus \cC',\cC',\tau}(\omega)$, which is by definition $\Pr(X_{\cC \setminus \cC'} \in \cdot \mid X_{\cC'}=\tau)$, as required.
\end{proof}

\section{Remarks and open problems}
\label{sec:conclusion}

\begin{remark}
\label{rem:other-thm-proof}
We have given the details of the proof of Theorem~\ref{thm:main}. Theorem~\ref{thm:main-no-entropy} follows the same lines of proof, with minor modifications, all of which are in fact simplifications.

To obtain a proof of Theorem~\ref{thm:main-no-entropy} with the \emph{least} modifications to the existing proof, we may replace the random total order constructed in Lemma~\ref{lem:finiteordering} with the total order induced by an \iid\ process consisting of uniform $[0,1]$ random variables. Using this order in the proof of Lemma~\ref{lem:total-order} yields a random total order with the same properties as in Lemma~\ref{lem:total-order} (except for the bound on the entropy of the \iid\ process).
When $X$ is finite-valued, the proof then goes through with no further modifications. Otherwise, we let $Y^{\text{bits}}_v$ consist of infinitely many independent random bits, and then the proof goes through after an additional modification to the proof of Lemma~\ref{lem:eventually-inactive} (which relied on the fact that the entropy of $X$ is finite to deduce a bound on the expected number of bits used by the simulation; instead we only rely on the fact that, almost surely, the simulation uses only finitely many bits; see Theorem~\ref{thm:sim}).

It is instructive to note that a shorter and conceptually simpler proof exists when one does not need to worry about the entropy of the \iid\ process. This is essentially what is described in ``constructing a finitary coding'' in Section~\ref{sec:outline}. One way to implement the described coding would be to simply replace $u \pm t$ with $u$ everywhere in the construction in Section~\ref{sec:coding}. That is, instead of having an agent $u$ try to read an unused bit from location $u+t$ at time $t$, it always reads bits located at $u$. Since we may place an infinite sequence of bits at every vertex, it will never run out of available bits. In this way, there is no ``moving around'' of bits from one location to another. This could be made conceptually even simpler if instead of using simulations from random bits to obtain samples of distributions as they are needed, from the start, each $Y^{\text{bits}}_v$ is a collection $(W_{V,U,\tau})_{V,U,\tau}$ of independent random variables having distribution $\Pr(X_V \in \cdot \mid X_U = \tau)$ for all finite $U,V \subset \V$ and $\tau \in S^U$. Either way, a nice feature of this construction is that the coding radius depends only on the cell process $A$ constructed in Section~\ref{sec:cell-process}. Namely, the coding radius for determining $X_\zero$ is at most the maximum of $\min\{ r \ge 0 : A(\zero) \subset 
\Lambda_r(\zero \}\}$, where $A(\zero)$ is the cell of $\zero$ in $A_{\min \{ n \ge 1 : \zero \in A_n \}}$, the coding radius for determining the cell $A(\zero)$ and the coding radius for determining the cell process on $A(\zero)$, i.e., $(A_n \cap A(\zero))_{n \ge 1}$. We elaborate on this in the next remark.
\end{remark}

\begin{remark}
\label{rem:coding-radius}
Our main theorems give no information about the coding radius beyond its almost-sure finiteness. However, some information about the coding radius may be extracted from the proof given here. Specifically, Theorem~\ref{thm:main-no-entropy} may be enhanced to give a universal bound on the tail of the coding radius for any fixed graph and finite-dependence range. More precisely, for any transitive amenable graph $G$ and any integer $k \ge 1$, there exists a sequence $(c_n)_{n=1}^\infty$ tending to zero such that any $k$-dependent invariant random field $X$ on $G$ is a finitary factor of an \iid\ process with a coding radius $R$ satisfying that $\Pr(R \ge n) \le c_n$ for all~$n$. The sequence $(c_n)$ depends only on the graph $G$ and on the parameter $k$, and not on the group $\Gamma$ nor on the random field~$X$. Indeed, the sequence $(c_n)$ is governed by the properties of the cell process (see the last part of the previous remark). In particular, for many concrete choices of $G$ (and $k$), an explicit sequence $(c_n)$ may be found.

To illustrate this in a simple setting, let us show that for $G=\Z$ and $k=1$, one may take $c_n = 8/n$. In this case, instead of using the construction given in Section~\ref{sec:cell-process}, it is simpler to consider the finitary cell process $A$ given by $A_n := B_1 \cup \dots \cup B_n$, where $(B_n)_{n \ge 1}$ are independent random subsets of $\Z$, each being an independent Bernoulli percolation with parameter $1/2$. Then the level $N := \min\{ n : 0 \in A_n \}$ at which $0$ enters the cell process is a geometric random variable with parameter $1/2$, conditioned on which, the lengths $L^{\pm} := \min\{ m \ge 1 : \pm m \notin A_N \}$ of the cell of $0$ in $A_N$ in the positive/negative directions are (independent) geometric random variables with parameter $2^{-N}$, and the coding radius $R$ for determining $X_0$ is bounded by $\max \{L^+,L^-\}$. Thus,
\[ \Pr(R>r) \le 2 \cdot \E \big[ (1-2^{-N})^r \big] \le 2 \sum_{n=1}^\infty 2^{-n} e^{-r2^{-n}} \le \frac{4}{r} \sum_{m=-\infty}^\infty 2^m e^{-2^m} \le \frac{8}{r} ,\]
where we used the substitution $n=\lfloor \log_2 r \rfloor - m$.

We remark that if one would like to simultaneously control also the entropy of the \iid\ process (as in Theorem~\ref{thm:main}), then it is plausible that this can be done by allowing $(c_n)$ to depend on the entropy gap $\epsilon$ (and perhaps on $|S|$), but we did not pursue this.
\end{remark}

\begin{remark}\label{rem:spere-condition}
We do not know whether condition~\eqref{eq:sphere-condition} is necessary as stated in Theorem~\ref{thm:main}, however, as we now explain, some condition of this form is needed (i.e., the condition cannot be completely dropped).
Let $G$ be an infinite transitive graph on vertex set $\V$ and let $H$ be a finite transitive graph on $m \ge 2$ vertices. Let $G'$ be the graph obtained by replacing each vertex of $G$ with a copy of $H$ -- that is, the vertex set of $G'$ is $\V \times \{1,\dots,m\}$, and $(u,i)$ and $(v,j)$ are adjacent in $G'$ if and only if $u$ and $v$ are adjacent in $G$, or $u=v$ and $i$ and $j$ are adjacent in $H$.
Any graph $G'$ obtained in this manner is transitive, but fails to satisfy~\eqref{eq:sphere-condition}. Indeed, the balls of radius 2 (or even 1 when $H$ is a complete graph) around $(v,i)$ and $(v,j)$ coincide.
A simple case to have in mind is when $G=\Z$ and $H$ consists of an edge on two vertices, so that the vertices of $G'$ are $\Z \times \{0,1\}$ and there is an edge between $(u,i)$ and $(v,j)$ if and only if $|u-v| \le 1$.

Let $G'$ be any graph as above and let $(W_v)_{v \in \V}$ be independent uniform random variables on $\{1,\dots,m\}$.
Consider the random field $X$ on $G'$ defined by $X_{(v,i)}:=\1_{\{W_v=i\}}$. It is clear that $X$ is 2-dependent and $\text{Aut}(G')$-invariant. We claim that $X$ is not a $\text{Aut}(G')$-factor of any \iid\ process $Y$ on $G'$ whose single-site distribution has at least one atom (in particular, $Y$ cannot have finite entropy). Indeed, for any such process $Y$, the event $Y_{(v,1)}=\dots=Y_{(v,m)}$ has positive probability, and on this event there is no $\text{Aut}(G')$-equivariant way to distinguish between $(v,1),\dots,(v,m)$. That is, any $\text{Aut}(G')$-equivariant function $\varphi \colon T^{\V \times \{1,\dots,m\}} \to \{0,1\}^{\V \times \{1,\dots,m\}}$ must satisfy $\varphi(y)_{(v,1)}=\dots=\varphi(y)_{(v,m)}$ whenever $y \in T^{\V \times \{1,\dots,m\}}$ is such that $y_{(v,1)}=\dots=y_{(v,m)}$. In particular, the event $\varphi(Y)_{(v,1)}=\dots=\varphi(Y)_{(v,m)}$ has positive probability, and hence, $\varphi(Y)$ cannot have the same distribution as $X$.

We remark that there are transitive subgroups $\Gamma$ of $\text{Aut}(G')$ for which the above obstruction does not exist.
For example, let $\Gamma$ be the subgroup of $\text{Aut}(G')$ generated by $\text{Aut}(G)$ and $\text{Aut}(H)$, both of which are naturally embedded in $\text{Aut}(G')$.
Simple modifications to the proofs of Lemma~\ref{lem:finiteordering} and Lemma~\ref{lem:total-order} yield a $\Gamma$-invariant random total order with the desired properties. The rest of the proof then goes through unchanged showing that our main result holds in this case: any finitely dependent $\Gamma$-invariant process on $G'$ is a finitary $\Gamma$-factor of an \iid\ process with slightly larger entropy.
We believe that our main result holds in many similar situations, where condition~\eqref{eq:sphere-condition} is replaced by a suitable condition on $\Gamma$. We did not pursue this direction.
\end{remark}

\medbreak\noindent{\bf Open problems.}
\begin{enumerate}
	\item One may wonder about the situation on non-amenable graphs such as a regular tree (of degree at least three). Namely, is every automorphism-invariant finitely dependent process on a tree a finitary factor of an \iid\ process? In fact, even the more fundamental question of whether such a process is a factor of \iid\ (without the finitary condition) is still open; see~\cite[Question~2.2]{lyons2017factors}. The same questions may be asked on any transitive non-amenable graph.
	\item Does there exist a stationary finitely dependent process on $\Z$ (or, more generally, on some transitive amenable graph) that cannot be expressed as a finitary factor of an \iid\ process with finite expected coding radius? As mentioned, the 1-dependent 4-coloring and 2-dependent 3-coloring of~\cite{holroyd2016finitely} are believed to be examples of such processes, but this is still unproved.
	\item A finitary isomorphism is a finitary factor that is invertible and whose inverse is also finitary.
	Somorodinky~\cite{smorodinsky1992finitary} showed that every stationary finitely dependent process on $\Z$ is finitarily isomorphic to an \iid\ process. Is this true in higher dimensions? Namely, is every stationary finitely dependent process on $\Z^d$ finitarily isomorphic to an \iid\ process?
\end{enumerate}

\bibliographystyle{amsplain}
\bibliography{library}

\end{document}